\definecolor{c1}{rgb}{0,0,1}
\definecolor{c2}{rgb}{0,0.3,0.9}
\definecolor{c3}{rgb}{0.3,0.9}
\def\XXint#1#2#3{{\setbox0=\hbox{$#1{#2#3}{\int}$ }
\vcenter{\hbox{$#2#3$ }}\kern-.6\wd0}}
\theoremstyle{plain}
\newtheorem{theorem}{Theorem}[section]
\theoremstyle{definition}
\theoremstyle{lemma}
\newtheorem{lemma}[theorem]{Lemma}
\theoremstyle{Remark}
\newtheorem{Remark}[theorem]{Remark}
\theoremstyle{proposition}
\theoremstyle{corollary}
\theoremstyle{example}
\theoremstyle{assumption}
\newtheorem{assumption}[theorem]{Assumption}
\begin{document}

\title{Well-posedness and trend to equilibrium for the Vlasov-Poisson-Fokker-Planck system with a confining potential} 

\author{Gayrat Toshpulatov}




\maketitle


{\footnotesize
 \centerline{Institut f\"ur Analysis $\&$ Scientific Computing,} \centerline{Technische Universit\"at Wien, Wiedner Hauptstr. 8-10, A-1040 Wien, Austria,}
\centerline{gayrat.toshpulatov@tuwien.ac.at}} 
\medskip




\begin{abstract}
We study  well-posedness and long time behavior of  the  nonlinear Vlasov-Poisson-Fokker-Planck system with an external confining potential. The system describes the time evolution of particles (e.g.$\,\,$in a plasma) undergoing diffusion, friction and Coulomb interaction. 
We prove  existence and uniqueness of mild solutions  in a weighted Sobolev space. Moreover, we prove that the solutions converge to the global equilibrium exponentially. Our results hold for a wide class of external potentials and the  estimates on the rate of convergence are explicit and constructive.  The technique is based on  the construction of Lyapunov functionals,   new  short and long time estimates for the linearized system, and fixed point arguments.  
\end{abstract}
\textbf{2020 Mathematics Subject Classification:}  35Q83, 35Q84, 82C31, 82C40.\\
\textbf{Keywords:} {Kinetic theory of gases, Fokker-Planck equation, Vlasov-Poisson equation, hypoelliptic regularity, hypocoercivity, Lyapunov functional, long time behavior, convergence to equilibrium, semigroup, mild solution.}
\tableofcontents

\section{Introduction}

This paper is devoted to the study of  well-posedness and long time behavior of  the nonlinear Vlasov-Poisson-Fokker-Planck system
\begin{equation}\label{VPFP}
\begin{cases}
\partial_t f+v\cdot\nabla_x f-(\nabla_x V+\nabla_x \phi)\cdot \nabla_v f=\nu\text{div}_v(vf)+\sigma\Delta_v f, \, \, \, \, x, v\in \mathbb{R}^d, \, \, t>0\\
-\Delta_x \phi=\displaystyle\int_{\mathbb{R}^d}f dv, \, \, \, \, f_{|t=0}=f_0.
\end{cases}
\end{equation}
 The system  is one  of the fundamental  models in plasma physics, for the derivation and applications we refer to \cite{Chandra1, Chandra2, Pit, 10.2307/24895438}.  The variables $t\geq 0,$ $x \in \mathbb{R}^d,$ and $v \in \mathbb{R}^d,$ respectively, stand for time, position, and velocity. The first unknown  $f=f(t,x,v)\geq 0$ describes the evolution of the phase space probability density of charged particles. The second unknown  $\phi =\phi(t,x)$ 
  determines  the self-consistent \textit{repulsive} electrostatic potential. Because of the Poisson equation in \eqref{VPFP} we have
 \begin{equation*}\label{phi}
\nabla_x\phi=\frac{1}{|\mathbb{S}^{d-1}|}\frac{x}{|x|^d}\ast \int_{\mathbb{R}^d} f dv,
\end{equation*}
where $|\mathbb{S}^{d-1}|$ is the area of the unit sphere $\mathbb{S}^{d-1}$ in $\mathbb{R}^d.$ $V=V(x)$ is a given external electrostatic confinement  potential  (i.e., $V(x)\to +\infty$ as $|x|\to \infty$).  The operator $v\cdot \nabla_x-(\nabla_x V+\nabla_x \phi)\cdot \nabla_v$ is the transport operator. 
The Fokker-Planck  operator $\nu\text{div}_v(v\, \cdot)+\sigma\Delta_v $  describes the collision effects of particles and the interaction with the environment. $\nu>0$ and $\sigma>0$ denote respectively the friction and diffusion parameters.

In the literature there exists also the Vlasov-Poisson-Fokker-Planck system with  the self-consistent \textit{attractive}  electrostatic potential,  and it  is  widely used
in stellar physics \cite{Pad}. For that case the self-consistent electrostatic potential $\phi$ is defined by a change of sign in the Poisson equation. In this paper, we only consider the repulsive case.

We mention that, if  there is an interaction with a fixed background of positive charges in a plasma, then  the self-consistent electrostatic potential $\phi$ is defined  by $-\Delta_x \phi=\int_{\mathbb{R}^d}f dv-n,$ where $n=n(x)$ is a given non-negative
function which describes the background density (e.g. of ions). In this case, we can still write the system in the form of \eqref{VPFP} by replacing $\phi$ with  $\tilde{\phi}$ defined by $-\Delta_x \tilde{\phi}=\int_{\mathbb{R}^d}f dv$ and replacing $V$ with  $\tilde{V}\colonequals V+V_0,$ where   $V_0$ is defined by $ \Delta_x V_0= n.$   Therefore, without loss of generality, we assume $n= 0.$

The system has several properties following standard physical consideration. First, the Fokker-Planck  operator \textit{acts only on the velocity $v.$} It reflects the physical fact that collisions are localized in space.

Whenever $f(t,x,v)$ is a (well-behaved) solution,  we have \textit{conservation of mass}
\begin{equation*}   \displaystyle \int_{\mathbb{R}^{2d}}f(t,x,v)dxdv=\int_{\mathbb{R}^{2d}}f_0(x,v)dxdv, \, \, \, \, \, \, \forall t>0.
\end{equation*}
 Therefore, without loss of generality, we shall assume 
$f_0\geq 0$ and $ \displaystyle \int_{\mathbb{R}^{2d}}f_0(x,v)dxdv=1.$

If $V$ grows fast enough as $|x|\to \infty,$
 the system has a unique normalized \textit{steady state} or \textit{global equilibrium} \cite{Dolbeault1991STATIONARYSI, GLZ}
\begin{equation*}\label{steady state}
f_{\infty}(x,v)=
\rho_{\infty}(x)M(v),
\end{equation*}
where $$\rho_{\infty}(x)\colonequals \displaystyle \frac{e^{-\frac{\nu}{\sigma}[V(x)+\phi_{\infty}(x)]}}{ \int_{\mathbb{R}^{d}}e^{-\frac{\nu}{\sigma}[V(x')+\phi_{\infty}(x')]}dx' },\, \, \, \, \, \, \, \displaystyle M(v)\colonequals \frac{e^{-\frac{\nu}{\sigma}|v|^2/2}}{ (2\pi \sigma/\nu)^{d/2}},$$ and $\phi_{\infty}$ is a solution of  the  Poisson-Boltzmann-Emden equation \cite{PBE}
\begin{equation}\label{eq.steady state}
-\Delta_x \phi_{\infty}(x)=\displaystyle \frac{e^{-\frac{\nu}{\sigma}[V(x)+\phi_{\infty}(x)]}}{ \int_{\mathbb{R}^{d}}e^{-\frac{\nu}{\sigma}[V(x')+\phi_{\infty}(x')]}dx' }.
\end{equation}

The system is \textit{dissipative} in the sense that the following  relative entropy or free energy functional decreases under the time-evolution of $f$ \cite{H-theorem, Dol}: let $\mathrm{H}$ be a functional defined on the space of probability densities by
\begin{equation*}\label{H}
f\to \mathrm{H}[f]\colonequals \int_{\mathbb{R}^{2d}} f \ln{\frac{f}{f_{\infty}}}dxdv+\int_{\mathbb{R}^{d}}|\nabla_x \phi-\nabla_x \phi_{\infty}|^2dx,
\end{equation*}
with $\phi$ given by the Poisson equation $\displaystyle -\Delta_x \phi=\int_{\mathbb{R}^d}f dv.$ 
By  the Csisz\'ar-Kullback-Pinsker inequality \cite{Cs}
\begin{equation*}
\mathrm{H}[f]\geq \frac{1}{2} ||f -f_{\infty}||^2_{L^1(\mathbb{R}^{2d})}+\int_{\mathbb{R}^{d}}|\nabla_x \phi-\nabla_x \phi_{\infty}|^2dx\geq 0,
\end{equation*}
the minimum of $\mathrm{H}$ is zero and  it is attained at $f_{\infty}$ (i.e., $\mathrm{H}[f]\geq 0$ for all probability densities $f$ and  $\mathrm{H}[f_{\infty}]=0).$   If $f=f(t,x,v)$ solves   \eqref{VPFP} and has sufficient smoothness and decay properties (as $|(x,v)^T|\to \infty$), we have $$\displaystyle \frac{d}{dt}\mathrm{H}[f(t)] 
 \leq 0. $$
 This decay of the functional $\mathrm{H}$ reminds us of the famous Boltzmann $H-$theorem stated for the Boltzmann equation \cite{Cer}. This similarity is  expected since the Fokker-Planck operator can be considered as a linear variant of Boltzmann's collision operator \cite{Cer, Vil.H}.

 On the basis of the decay of the functional $\mathrm{H},$  one can expect that  $\mathrm{H}[f(t)]$ decreases to its minimum (which is zero) as $t\to \infty.$ Since this minimum   is attained at $f_{\infty},$ one can argue that $f(t)$ converges to the equilibrium distribution $f_{\infty}$ as $t\to \infty.$
 Clearly, before proving this convergence, we first need to establish the well-posedness of the system \eqref{VPFP}.
  Therefore, we get  an important problem:
   to prove  existence and uniqueness of  the solution $f,$ then
   to prove  the convergence $f(t)\to f_{\infty} $ as $t\to \infty.$

  If we have a reasonable solution $f$ and if it satisfies  some a-priori bounds, using the decay of the functional $\mathrm{H}$ above and compactness tools, we can prove that $f(t)$ does indeed converge to  $f_{\infty} $ as $t\to \infty$ \cite{Bouch.Dol, Dol}. But this method based on compactness gives no information on the rate of convergence and it is non-constructive. We are interested in the study of rates of convergence  and we want to derive  constructive bounds for this convergence, because explicit and constructive estimates are essential for applications  in physics (e.g. equilibration process, numerical simulations).

  There are many works dealing with this problem.
When the system \eqref{VPFP} does not have a confining potential (i.e., $V=0$)   existence,  uniqueness and  asymptotic behavior have been studied comprehensively: Degond considered the frictionless  system (i.e., $\nu=0$) in \cite{Degond} and showed global  existence of classical solutions  in dimension $d\leq 2,$ see also \cite{Ono, Ono.Str}. The long time behavior of the frictionless  system  was studied in \cite{Car.Sol.Vaz, Carpio, kagei}. With non-zero friction (i.e., $\nu>0$), global existence of  classical solutions in dimension $d\leq 2$ and  local in time existence in dimension $d\geq 3$ were obtained by Victory and O'Dwyer in \cite{10.2307/24895438}.  Bouchut \cite{BOUCHUT1993239} proved global existence of classical solutions in dimension $d=3.$  He also showed in \cite{BOUCHUT1995225} that the system has smoothing properties.  Then  global existence of weak solutions in dimension $d=3$ was studied in  \cite{Vic, Car.Sol}.
Using the micro-macro strategy   Hwang and Jang \cite{Hwang.Jang} obtained  exponential decay in a close-to-equilibrium regime.  There are recent studies \cite{Tris, Bedro} on torus (i.e., $x\in \mathbb{T}^d$) concerning the long time behavior and Landau damping in a weak collisional regime, i.e., if $\nu$ and $\sigma$ are sufficiently small.

When the equation has a non-zero confining potential $V,$  there are only few studies: 
  When the self-consistent interaction  is sufficiently small (i.e., the non-linear term  $\nabla_x \phi\cdot \nabla_v f$  is replaced with  $\varepsilon\nabla_x \phi\cdot \nabla_v f$ and $\varepsilon$ is sufficiently small) and $\partial^2_{x_ix_j} V\in \bigcap_{p=1}^{\infty} W^{p, \infty}$ for all $i,j \in \{1,...,d\},$ H\'erau and Thomann \cite{Herau.Thomann}  proved a global existence result in dimensions $d=2$ and $d=3.$ They also showed that the solution converges to the steady state exponentially. Their proof relies on the hypocoercive and hypoelliptic properties  of  the linear kinetic Fokker-Planck equation obtained in \cite{Herau.Nier,Herau} and a fixed point argument. Recently,  Abddala, Dolbeault et al.\,\,\cite{addala2021text} studied the linearized Vlasov-Poisson-Fokker-Planck system  around the steady state $f_{\infty}.$ They used a hypocoercive method developed in \cite{dolbeault2009hypocoercivity, dolbeault2015hypocoercivity} and proved exponential stability of the linearized system. 


  In this paper, we shall improve these previous results when there is a non-zero potential $V.$ For the full system \eqref{VPFP} with a non-zero potential $V$ and the parameters $\nu>0, $ $\sigma>0$ we shall prove existence, uniqueness and convergence $f(t)\to f_{\infty} $ as $t\to \infty$ for a wide class of potentials $V.$  Moreover,  our decay rates  are explicit and constructive.

 The organization of this paper is as follows. In Section 2 we present the assumptions on the potential, define functional  spaces and state the main results. In Section 3 we show existence and regularity of the steady state, establish Poincar\'e type inequalities and gather some estimates for the Poisson equation. Section 4 contains the analysis of the linearized Vlasov-Poisson-Fokker-Planck system: existence, uniqueness, exponential stability, and hypoelliptic regularity. The final section presents some estimates on the semigroup of the linearized system and  the proof of the main results concerning the nonlinear Vlasov-Poisson-Fokker-Planck system.

\section{Setting and main results}
We make the following assumptions on the external potential $V.$
\begin{assumption}
\begin{itemize}
\item[(A1)]
$V\in C^{\infty}(\mathbb{R}^d)$ is bounded from below,  \begin{equation*}\label{A1}
e^{-\frac{\nu}{\sigma}V} \in L^1(\mathbb{R}^d)\, \, \, \,  \text{ and } \, \, \, \,  |\nabla_x V|e^{-\frac{\nu}{\sigma}V} \in L^r(\mathbb{R}^d),\, \, r>d.
\end{equation*}
\item[(A2)] There exists a constant $c_1>0$ such that
 \begin{equation}\label{hess<grad2}
 \left| \left|\frac{\partial^2 V(x)}{\partial x^2}\right|\right|_F\leq c_1(1+|\nabla_x V(x)|), \, \, \, \, \, \,  \forall x\in \mathbb{R}^d,
\end{equation}
where $||\cdot||_F$ denotes the Frobenius norm.
\item[(A3)] There exists a constant $ \kappa_1>0$ such that  the Poincar\'e inequality
\begin{equation}\label{Poin.V}
 \int_{\mathbb{R}^d}h^2e^{-\frac{\nu}{\sigma}V}dx-\left(\int_{\mathbb{R}^d}he^{-\frac{\nu}{\sigma}V}dx\right)^2\leq \kappa_1\int_{\mathbb{R}^d}|\nabla_xh|^2 e^{-\frac{\nu}{\sigma}V}dx
\end{equation}
holds for all $h$ with $\displaystyle \int_{\mathbb{R}^d}h^2e^{-\frac{\nu}{\sigma}V}dx<\infty$ and $\displaystyle \int_{\mathbb{R}^d}|\nabla_x h|^2e^{-\frac{\nu}{\sigma}V}dx<\infty.$
\end{itemize}
\end{assumption}

There are a lot of studies and sufficient conditions implying the Poincar\'e inequality \eqref{Poin.V}. For example, if $V$ is uniformly convex (Bakry-Emery criterion) or $ \liminf_{|x|\to \infty} \left(a|\nabla V(x)|^2-\Delta V(x)\right)> 0
$   for some
 $a \in (0,1),$
then the Poincar\'e inequality holds,  for more information  see \cite[Chapter 4]{bakry2014analysis}, \cite{Bakry2008ASP}. We note that Assumption \ref{A1} includes the potentials $V$ considered in \cite{addala2021text, Herau.Thomann}.
It is possible to make weaker regularity hypothesis on the potential $V,$ but we maintain the assumption that $V \in C^{\infty} $ to keep the presentation simple. We note that the potentials of the form $$V(x)=r|x|^{k}+V_0(x),$$
where $r>0,$ $k>1$ and $V_0\colon \mathbb{R}^n \to \mathbb{R}$ is a polynomial of degree  $j<k,$ satisfy our assumptions. In particular, it includes the double-well potentials of the form $V(x)=r_1 |x|^4-r_2|x|^2,$ $r_1, r_2>0.$

We define  the following weighted spaces 
$$L^2(\mathbb{R}^{2d}, f_{\infty})\colonequals \left\{ g :\mathbb{R}^{2d}\to \mathbb{R}: \, \, \int_{\mathbb{R}^{2d}}g^2f_{\infty}dxdv<\infty \right\}$$
and
$$H^1(\mathbb{R}^{2d}, f_{\infty})\colonequals \left\{ g\in L^2(\mathbb{R}^{2d}, f_{\infty}): \int_{\mathbb{R}^{2d}}|\nabla_x g|^2f_{\infty}dxdv +\int_{\mathbb{R}^{2d}}|\nabla_v g|^2f_{\infty}dxdv<\infty \right\}.$$
The corresponding  norms are
\begin{equation*}\label{L^2 norm}||g||_{ L^2(\mathbb{R}^{2d},f_{\infty})}\colonequals \sqrt{\int_{\mathbb{R}^{2d}}g^2f_{\infty}dxdv}
\end{equation*}
and $$||g||_{ H^1(\mathbb{R}^{2d},f_{\infty})}\colonequals \sqrt{ \int_{\mathbb{R}^{2d}} g^2f_{\infty}dxdv+\int_{\mathbb{R}^{2d}}|\nabla_x g|^2f_{\infty}dxdv+\int_{\mathbb{R}^{2d}}|\nabla_v g|^2f_{\infty}dxdv}. $$

We define the generalized Sobolev space or the Bessel potential space  \cite[Section V.3]{stein1970singular}, \cite[Section 1.2.6]{Adams}
$$\mathscr{L}^{p}_{\alpha}(\mathbb{R}^d)\colonequals \{g:\mathbb{R}^d \to \mathbb{R}: \, \, \, (1-\Delta_x)^{\frac{\alpha}{2}}g\in L^p(\mathbb{R}^d)\}, \, \, \, \, \,1<p<\infty, \, \, \, \, \alpha \in \mathbb{R}, $$
where  $(1-\Delta_x)^{\frac{\alpha}{2}}g\colonequals \mathcal{F}^{-1}\left((1+4\pi^2 |\xi|^2)^{\frac{\alpha}{2}}\mathcal{F}{g}\right)$ with the Fourier transform $\displaystyle \mathcal{F}g(\xi)\colonequals \int_{\mathbb{R}^d}g(x)e^{-2\pi x\cdot \xi}dx.$
The norm on $\mathscr{L}^{p}_{\alpha}(\mathbb{R}^d)$ is $$||g||_{\mathscr{L}^{p}_{\alpha}(\mathbb{R}^d)}\colonequals ||(1-\Delta_x)^{\frac{\alpha}{2}}g||_{L^{p}(\mathbb{R}^d)}.$$ For $\alpha\in \mathbb{N},$ $\mathscr{L}^{p}_{\alpha}(\mathbb{R}^d)$ coincides with the usual Sobolev space $W^{\alpha, p}(\mathbb{R}^d),$  $1<p<\infty.$ Let  $\alpha_1,\alpha_2\in \mathbb{R},$ $1<p_1,p_2<\infty,$ $\theta\in (0,1),$ $\alpha=(1-\theta)\alpha_1+\theta \alpha_2$ and $p=(1-\theta)p_1+\theta p_2,$ then $\mathscr{L}^{p}_{\alpha}(\mathbb{R}^d)$ is the complex interpolation space  \cite[Chapter 6]{interpolation} between $\mathscr{L}^{p_1}_{\alpha_1}(\mathbb{R}^d)$ and $\mathscr{L}^{p_2}_{\alpha_2}(\mathbb{R}^d),$ i.e., $\mathscr{L}^{p}_{\alpha}(\mathbb{R}^d)=\left(\mathscr{L}^{p_1}_{\alpha_1}(\mathbb{R}^d), \mathscr{L}^{p_2}_{\alpha_2}(\mathbb{R}^d)\right)_{[\theta]}.$

We  define a weighted fractional Sobolev space
$$H^{\alpha}_x(\mathbb{R}^{2d}, f_{\infty})\colonequals \{ g\in L^2(\mathbb{R}^{2d}, f_{\infty}): \, 
(1-\Delta_x)^{\frac{\alpha}{2}}(gf^{1/2}_{\infty})\in L^2(\mathbb{R}^{2d})\},\, \, \, \, \alpha \in [0,1].$$
 The corresponding norm is
$$||g||_{ H^{\alpha}_x(\mathbb{R}^{2d},f_{\infty})}\colonequals
||(1-\Delta_x)^{\frac{\alpha}{2}}(gf^{1/2}_{\infty})||_{ L^{2}(\mathbb{R}^{2d})}
.$$
By the Plancherel theorem
$$||g||_{ H^{0}_x(\mathbb{R}^{2d},f_{\infty})}=||g||_{ L^2(\mathbb{R}^{2d},f_{\infty})}$$
and
\begin{equation}\label{H_x^1}||g||_{ H^{1}_x(\mathbb{R}^{2d},f_{\infty})}= \sqrt{\displaystyle \int_{\mathbb{R}^{2d}} g^2f_{\infty}dxdv+\int_{\mathbb{R}^{2d}}|\nabla_x (gf_{\infty}^{1/2})|^2dxdv}.
\end{equation}


We also define $$H^{1}_v(\mathbb{R}^{2d}, f_{\infty})\colonequals \{ g\in L^2(\mathbb{R}^{2d}, f_{\infty}): \, |\nabla_v g| \in  L^2(\mathbb{R}^{2d}, f_{\infty}) \}$$ with the norm
$$||g||_{ H^{1}_v(\mathbb{R}^{2d},f_{\infty})}\colonequals  \sqrt{\displaystyle \int_{\mathbb{R}^{2d}} g^2f_{\infty}dxdv+\int_{\mathbb{R}^{2d}}|\nabla_v g|^2f_{\infty}dxdv}.$$

  Let $\displaystyle  h \colonequals\frac{f-f_{\infty}}{f_{\infty}},$  $ \displaystyle \psi \colonequals \phi-\phi_{\infty}$ and  $ \displaystyle h_0 \colonequals \frac{f_0-f_{\infty}}{f_{\infty}}.$ 
  Then, we  write the system   \eqref{VPFP} as
  \begin{equation}\label{hVPFP}
  \begin{small}
\begin{cases}
\partial_t h+v\cdot\nabla_x h-\nabla_x( V+\phi_{\infty})\cdot \nabla_v h+v\cdot\nabla_x \psi-\sigma \Delta_v h+\nu v\cdot \nabla_v h=\nabla_x \psi\cdot (\nabla_v h-\frac{\nu}{\sigma}v h)  \\
-\Delta_x \psi=\displaystyle\int_{\mathbb{R}^d}hf_{\infty} dv, \, \, \, \, \, \, h_{|t=0}=h_0.  
\end{cases}
\end{small}
\end{equation}
We note that $\displaystyle \int_{\mathbb{R}^{2d}}f_0dxdv=\int_{\mathbb{R}^{2d}}f_{\infty}dxdv=1$ implies $$\int_{\mathbb{R}^{2d}}h_0 f_{\infty}dxdv=0.$$
It is obvious that the existence of a unique solution $f(t)$ to \eqref{VPFP} and the convergence $f(t)\to f_{\infty}$ as $t\to \infty$ are respectively equivalent to the existence of a unique solution $h(t)$ to \eqref{hVPFP} and  the convergence $h(t)\to 0$ as $t\to \infty.$

 The term $\nabla_x \psi\cdot \nabla_v h-\frac{\nu}{\sigma}v\cdot\nabla_x \psi h $ appearing on the right hand side of \eqref{hVPFP} is  nonlinear. If we drop it,  we  obtain the linearized Vlasov-Poisson-Fokker-Planck system  around the steady state $f_{\infty}$
\begin{equation}\label{linVPFP}
\begin{cases}
\partial_t h+v\cdot\nabla_x h-\nabla_x (V+\phi_{\infty})\cdot \nabla_v h+v\cdot\nabla_x \psi-\sigma\Delta_v h+\nu v\cdot \nabla_v h =0\\
-\Delta_x \psi=\displaystyle\int_{\mathbb{R}^d}hf_{\infty} dv,\, \, \, \, \, \, h_{|t=0}=h_0.  
\end{cases}
\end{equation}
We first study this linearized system  in dimension $d\geq 3.$
We prove that the linearized  system \eqref{linVPFP} is well-posed in $C\left([0,\infty); L^2(\mathbb{R}^{2d}, f_{\infty})\right)$ and  has  regularizing properties which is called \textit{hypoellipticity} \cite{Hor}. More precisely, even if the initial data $h_0$ is in $L^2(\mathbb{R}^{2d}, f_{\infty}),$ the solution $h(t)$ is in $H^1(\mathbb{R}^{2d}, f_{\infty})$ for $t>0,$ and we obtain short time estimates for this $L^2(\mathbb{R}^{2d}, f_{\infty})\to H^1(\mathbb{R}^{2d}, f_{\infty})$ regularization. We also prove that the solutions of \eqref{linVPFP} decay exponentially to zero as $t\to \infty$ in
$H^1(\mathbb{R}^{2d}, f_{\infty}):$
\begin{theorem}[\textbf{The linerized Vlasov-Poisson-Fokker-Planck system}]\label{th:lin}
Let $d\geq 3$ and  $h_0 \in L^2(\mathbb{R}^{2d}, f_{\infty})
.$  
\begin{itemize}
\item[(i)] Let
$V\in C^{\infty}(\mathbb{R}^d)$ be bounded from below and
$e^{-\frac{\nu}{\sigma}V}\in L^1(\mathbb{R}^d).$ Then, the system  \eqref{linVPFP} admits a unique mild solution $$h\in C\left([0,\infty); L^2(\mathbb{R}^{2d}, f_{\infty})\right)$$ and $$|\nabla_x \psi| \in  C\left([0,\infty); {L^{\frac{pd}{d-p}}}(\mathbb{R}^d)\right), \, \, \, \, \, \forall\, p\in (1, 2].$$
\item[(ii)]Let the assumptions $(A1)$ and $(A2)$ hold. Then,  for any $t_0>0,$  there are explicitly computable constants $C_1>0$ and $C_2>0$ (independent of $h_0$) such that
\begin{equation}\label{t^3}
\int_{\mathbb{R}^{2d}}|\nabla_x h(t)|^2f_{\infty}dxdv\leq \frac{C_1}{t^3} \int_{\mathbb{R}^{2d}}h^2_0f_{\infty}dxdv
\end{equation} and
\begin{equation}\label{t^2}
\int_{\mathbb{R}^{2d}}|\nabla_v h(t)|^2f_{\infty}dxdv\leq \frac{C_2}{t} \int_{\mathbb{R}^{2d}}h^2_0f_{\infty}dxdv
\end{equation}
hold for all $t \in (0,t_0].$
\item[(iii)] Let  $\displaystyle \int_{\mathbb{R}^{2d}}h_0 f_{\infty} dxdv=0,$ the assumptions $(A1),$ $(A2)$ and $(A3)$ hold. Then, there are explicitly computable  constants $\lambda>0,$    $C_3>0$ and $C_4>0$  (independent of $h_0$) such that
\begin{equation}\label{decay} ||h(t)||_{ H^1(\mathbb{R}^{2d},f_{\infty})}\leq C_3e^{-\lambda t}||h_0||_{ L^2(\mathbb{R}^{2d},f_{\infty})}
\end{equation}
and \begin{equation}\label{psi W}
{||\nabla_x \psi(t)||_{L^{\frac{pd}{d-p}}(\mathbb{R}^d)}}+||\nabla_x \psi(t)||_{W^{1,\frac{2d}{d-2}}(\mathbb{R}^d)} \leq C_4 e^{-\lambda t}||h_0||_{ L^2(\mathbb{R}^{2d},f_{\infty})}
\end{equation}
hold for all $t\geq t_0>0$ {and $p\in (1,2].$}
\end{itemize}
\end{theorem}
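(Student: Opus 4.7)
I would prove Theorem~\ref{th:lin} in three stages corresponding to the three parts of the statement. The starting point is to write the linearized equation as the abstract evolution equation
$$\partial_t h + L_0 h = -v\cdot\nabla_x\psi[h],$$
where $L_0 h \colonequals v\cdot\nabla_x h - \nabla_x(V+\phi_\infty)\cdot\nabla_v h + \nu v\cdot\nabla_v h - \sigma\Delta_v h$ is the linearized kinetic Fokker-Planck operator around $f_\infty$ and $\psi[h]$ is determined by $-\Delta_x\psi = \int h f_\infty dv$. For part (i), I would first show that $-L_0$ generates a $C_0$-semigroup of contractions on $L^2(\mathbb{R}^{2d}, f_\infty)$: the transport part $v\cdot\nabla_x - \nabla_x(V+\phi_\infty)\cdot\nabla_v$ is skew-symmetric in this weighted space (using $f_\infty = \rho_\infty M$ and integration by parts), while the Fokker-Planck part gives $\langle L_0 h, h\rangle = \sigma\|\nabla_v h\|^2_{L^2(f_\infty)}\geq 0$; the resolvent problem is solved by Lax-Milgram on a suitable weighted space. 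The Poisson coupling is then a bounded perturbation: writing $\nabla_x\psi = \frac{1}{|\mathbb{S}^{d-1}|}\frac{x}{|x|^d}\ast\int h f_\infty dv$, the Hardy-Littlewood-Sobolev inequality gives $\|\nabla_x\psi\|_{L^{pd/(d-p)}} \lesssim \|\int h f_\infty dv\|_{L^p}\lesssim \|h\|_{L^2(f_\infty)}$ for $p\in(1,2]$ (the last step by Hölder in $v$ against the Gaussian $M$). Uniqueness and existence of a mild solution then follow from Duhamel's formula and Picard iteration, with time continuity of $\nabla_x\psi$ inherited from that of $h$.

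For part (ii), I would use a time-weighted Hérau-Villani-type hypoelliptic functional
$$\Phi(t) = \|h\|^2 + \alpha t\|\nabla_v h\|^2 + 2\beta t^2\langle\nabla_v h,\nabla_x h\rangle + \gamma t^3\|\nabla_x h\|^2,$$
all norms and inner products taken in $L^2(\mathbb{R}^{2d},f_\infty)$. Differentiating along the flow produces: (a) a dissipative term $-2\sigma\|\nabla_v h\|^2$ from $\Delta_v$; (b) a commutator contribution $[L_0,\nabla_v]h = -\nabla_x h + (\text{Hess}(V+\phi_\infty))\nabla_v h - \nu\nabla_v h$, whose first piece, combined with the cross term $t^2\langle\nabla_v h,\nabla_x h\rangle$, yields the crucial negative $\|\nabla_x h\|^2$ at the next order; (c) Hessian remainders controlled by Assumption (A2) together with pointwise bounds on $\partial^2\phi_\infty$ from Section~3; (d) nonlocal contributions from $v\cdot\nabla_x\psi$ handled by the Riesz-potential estimates of (i) and Cauchy-Schwarz against $\|\nabla_v h\|^2$. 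Picking $\beta^2< \alpha\gamma/4$ with $\alpha,\beta,\gamma$ small enough, $\Phi$ is a nonnegative quadratic form with $\frac{d}{dt}\Phi\leq 0$ on $[0,t_0]$, so $\alpha t\|\nabla_v h\|^2\leq \Phi(t)\leq \Phi(0) = \|h_0\|^2$ and similarly $\gamma t^3\|\nabla_x h\|^2\leq \|h_0\|^2$, which is exactly (\ref{t^2})--(\ref{t^3}).

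For part (iii), using (ii) we may assume $h(t_0)\in H^1(\mathbb{R}^{2d},f_\infty)$ and prove exponential decay of a modified time-independent $H^1$ functional
$$\mathcal{E}[h] = \|h\|^2 + A\|\nabla_v h\|^2 + 2B\langle\nabla_v h,\nabla_x h\rangle + C\|\nabla_x h\|^2 + D\|\nabla_x\psi[h]\|^2_{L^2(\mathbb{R}^d)},$$
with constants to be tuned. The extra term $D\|\nabla_x\psi\|^2$ is essential because the macroscopic (velocity-averaged) part of $h$ is invisible to the $\nabla_v$-dissipation and must be recovered through the Poisson coupling; computing its derivative using the continuity equation $\partial_t\int hf_\infty dv + \nabla_x\cdot\int v h f_\infty dv = 0$ produces $-\int (\int vhf_\infty dv)\cdot\nabla_x\psi dx$, and the Gaussian Poincaré inequality bounds the current by $\|\nabla_v h\|^2_{L^2(f_\infty)}$ plus a term in the mass which vanishes by $\int h_0 f_\infty = 0$. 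The diffusion, the commutator, the Hessian control from (A2), and the Poincaré inequalities in $v$ (Gaussian) and in $x$ (Assumption (A3)) then combine, on the mass-zero subspace, to give the coercive estimate $\frac{d}{dt}\mathcal{E}\leq -2\lambda\mathcal{E}$, whence (\ref{decay}). The bound (\ref{psi W}) follows from Calderón-Zygmund applied to $-\Delta_x\psi = \int h f_\infty dv$, combined with (\ref{decay}) and Hölder in $v$. \textbf{The main obstacle throughout} is the nonlocal Poisson term $v\cdot\nabla_x\psi$, which neither commutes with $\nabla_x$ and $\nabla_v$ nor respects the Gaussian weight; absorbing it at each stage requires the Riesz-potential estimates, the elliptic regularity of $\phi_\infty$ from Section~3, and the extra coercive term $D\|\nabla_x\psi\|^2$ in $\mathcal{E}$, together with a careful joint tuning of the constants in $\Phi$ and $\mathcal{E}$.
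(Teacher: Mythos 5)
Your proposal is correct in outline and follows the same overall strategy as the paper: semigroup generation for the kinetic Fokker--Planck part plus a bounded Poisson perturbation for (i), a time-weighted hypocoercive functional with weights $t^3,t^2,t$ for (ii), and a constant-coefficient version of the same functional combined with the Poincar\'e inequalities and the regularization of (ii) for (iii). The genuine difference is how the nonlocal coupling $v\cdot\nabla_x\psi$ enters the Lyapunov functional. The paper builds its quadratic form on $\bigl(\nabla_x(h+\psi),\nabla_v h\bigr)$ and adds $\int_{\mathbb{R}^d}|\nabla_x\psi|^2dx$ to $\int h^2 f_\infty dxdv$ with coefficient exactly one, so that the coupling cancels identically at zeroth order ($\frac{d}{dt}\|h\|^2=-2\sigma\|\nabla_v h\|^2_{L^2(f_\infty)}$) and the only first-order remainder is $\nabla_x\partial_t\psi$, which is bounded by $\|\nabla_v h\|_{L^2(f_\infty)}$ through the continuity equation. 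Your functional keeps $\nabla_x h$ and a free coefficient $D$, so you must additionally absorb terms involving $\nabla_x\psi$ and $\nabla_x^2\psi$ (via the Hardy--Littlewood--Sobolev and Calder\'on--Zygmund bounds plus the Poincar\'e inequality); this works with the right Young splittings, but it costs more bookkeeping and, in (ii), realistically yields $\frac{d}{dt}\Phi\le C\Phi$ rather than the claimed $\frac{d}{dt}\Phi\le 0$ --- still sufficient on $[0,t_0]$, only with an extra factor $e^{Ct_0}$ in $C_1,C_2$.

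Two places where your sketch is thinner than what is actually needed. First, generation of the semigroup for the degenerate operator $L_0$ is not a routine Lax--Milgram application: the transport term $v\cdot\nabla_x-\nabla_x(V+\phi_\infty)\cdot\nabla_v$ is not bounded on the natural form domain, so one needs either a Lions-type argument or the hypoelliptic theory; the paper first shows $\phi_\infty\in C^\infty$ by elliptic bootstrap and then invokes Helffer--Nier before adding $v\cdot\nabla_x\psi$ as a bounded perturbation. Second, the $W^{1,\frac{2d}{d-2}}$ part of \eqref{psi W} requires the weighted-to-unweighted density estimate $\bigl\|\nabla_x\int_{\mathbb{R}^d}hf_\infty dv\bigr\|_{L^2(\mathbb{R}^d)}\lesssim\|h\|_{H^1_x(\mathbb{R}^{2d},f_\infty)}$, which uses assumption (A2) through the control of $|\nabla_x(V+\phi_\infty)|$ (the paper's Lemma 3.7); your ``Calder\'on--Zygmund plus H\"older in $v$'' hides exactly this step. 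Neither point is fatal, but both must be supplied to make the argument complete.
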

\begin{Remark}
\begin{enumerate}
\item We think that Theorem \ref{th:lin}$\,(ii)$  is the first  regularity result for the linearized system.  This  can be considered as a generalization of the regularity results for the linear kinetic Fokker-Planck equation \cite[Theorem 1.1]{Herau}, \cite[Theorem A.8]{article}. 
\item  Theorem \ref{th:lin}$\,(iii)$  extends  the work of Abddala, Dolbeault et al.  in \cite[Theorem 1]{addala2021text}, since they  obtained the exponential decay only in $L^2(\mathbb{R}^{2d}, f_{\infty}).$ 
\item Theorem \ref{th:lin} holds for any parameters $\nu>0$ and $\sigma>0.$ It can be obtained when $x\in \mathbb{T}^d$ by similar computation.  Hence,  Theorem \ref{th:lin} extends the result of Landau damping  for the linearized system in \cite[Theorem 3.1]{Tris} where  $\nu$ and $\sigma$ are required to be small.
\end{enumerate}
\end{Remark}

Next, we pass to the nonlinear Vlasov-Poisson-Fokker-Planck system in dimension $d=3.$
We define operators
$$Kh \colonequals v\cdot\nabla_x h-\nabla_x (V+ \phi_{\infty})\cdot \nabla_v h+v\cdot\nabla_x \psi-\sigma\Delta_v h+\nu v\cdot \nabla_v h$$
and
$$R[h]\colonequals \nabla_x \psi\cdot \nabla_v h-\frac{\nu}{\sigma}v\cdot\nabla_x \psi h. $$
 Since $\nabla_x \psi$  can be expressed by $h$ as
\begin{equation*}\label{grad psi}\nabla_x\psi=\frac{1}{|\mathbb{S}^{d-1}|}\frac{x}{|x|^d}\ast \int_{\mathbb{R}^d} hf_{\infty} dv,
\end{equation*}
 we consider $K$ and $R$ as operators acting only on $h.$
It shows that $K$ is linear and $R$ is nonlinear with respect to $h.$
Then the linearized system \eqref{linVPFP} can be written as
\begin{equation*}
\partial_t h+Kh=0,
\end{equation*}
while the nonlinear system \eqref{hVPFP} can be written as
\begin{equation*}
\partial_t h+Kh=R[h].
\end{equation*}
From Theorem \ref{th:lin}$\,(i)$ we obtain that  $K$ generates a $C_0$ semigroup  $e^{-t K }$ on $L^2(\mathbb{R}^{2d},\\ f_{\infty}).$ Then
the Duhamel principle suggests to convert  this nonlinear system  to an integral equation
\begin{equation}\label{INTe}
h(t)=e^{-t K } h_0+\int_0^t e^{-(t-s) K } R [h(s)] ds.
\end{equation}
We mention that a function $h$ satisfying \eqref{INTe} is called a \textit{mild solutions} to \eqref{hVPFP}, see \cite[Section 6.1]{Pazy}. Using the properties of  $e^{-t K }$ and fixed point arguments we show that there is a unique solution to this integral equation: 

\begin{theorem}[\textbf{Local well-posedness}]\label{Short time}
Let $d=3,$ $ \alpha \in \left(\frac{1}{2}, \frac{2}{3}\right),$ the assumptions $(A1)$ and $ (A2)$ hold. Then, for every  $h_0\in H^{\alpha}_x(\mathbb{R}^{6}, f_{\infty})\cap H^{1}_v(\mathbb{R}^{6}, f_{\infty}),$ there is a $t_{max}\in (0, \infty]$ such that \eqref{hVPFP} has a unique mild solution  $$h\in C \left([0, t_{max}); H_x^{\alpha} (\mathbb{R}^{6}, f_{\infty})\right)\cap C \left([0, t_{max}); H_v^{1} (\mathbb{R}^{6}, f_{\infty})\right)$$
and $$|\nabla_x \psi| \in C \left([0,t_{max}); \mathscr{L}^{6}_{\alpha}(\mathbb{R}^3)\right).$$
Moreover, if $t_{max}<\infty,$ then at least one of the limits $$\lim_{t\nearrow t_{max}}||h(t)||_{ H^{\alpha}_x(\mathbb{R}^{6},f_{\infty})} \, \, \, \,\text{ and } \, \, \, \, \, \, \,  \lim_{t\nearrow t_{max}}||h(t)||_{ H_v^1(\mathbb{R}^{6},f_{\infty})}$$ is infinite.
\end{theorem}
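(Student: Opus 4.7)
The plan is to rewrite \eqref{hVPFP} in its mild form \eqref{INTe} and apply Banach's fixed point theorem to the map
$\Phi[h](t)\colonequals e^{-tK}h_0+\int_0^t e^{-(t-s)K}R[h(s)]\,ds$
on a closed ball of radius $2\|h_0\|_{H^\alpha_x(\mathbb{R}^6,f_\infty)\cap H^1_v(\mathbb{R}^6,f_\infty)}$ in the Banach space
$X_T\colonequals C\left([0,T]; H^\alpha_x(\mathbb{R}^6,f_\infty)\cap H^1_v(\mathbb{R}^6,f_\infty)\right)$,
for some small $T>0$. Uniqueness on $[0,T]$, a maximal existence time $t_{max}$, and the blow-up alternative then follow by the standard restart-and-extension argument: so long as the $X_t$-norm stays finite, $\Phi$ can be re-iterated with initial data at the new time, and the uniform time-of-existence lemma forces blow-up of at least one of the two norms as $t\nearrow t_{max}$.

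The linear ingredients come from Theorem \ref{th:lin}(ii): extracting square roots in \eqref{t^3}--\eqref{t^2} and combining with the $L^2(\mathbb{R}^{2d},f_\infty)$-boundedness of $e^{-tK}$ yields
$\|\nabla_x e^{-tK}g\|_{L^2(\mathbb{R}^{6},f_\infty)}\leq C t^{-3/2}\|g\|_{L^2(\mathbb{R}^{6},f_\infty)}$ and
$\|\nabla_v e^{-tK}g\|_{L^2(\mathbb{R}^{6},f_\infty)}\leq C t^{-1/2}\|g\|_{L^2(\mathbb{R}^{6},f_\infty)}$
on $(0,t_0]$. Complex interpolation between $L^2(\mathbb{R}^{6},f_\infty)=H^0_x(\mathbb{R}^{6},f_\infty)$ and $H^1_x(\mathbb{R}^{6},f_\infty)$, whose norm \eqref{H_x^1} is compared with the unweighted gradient by absorbing $|\nabla_x(V+\phi_\infty)|$-terms through (A2) and the steady-state regularity from Section 3, then promotes the $x$-estimate to the fractional bound
$$\|e^{-tK}g\|_{H^\alpha_x(\mathbb{R}^{6},f_\infty)}\leq C_\alpha\,t^{-3\alpha/2}\,\|g\|_{L^2(\mathbb{R}^{6},f_\infty)},\qquad \alpha\in[0,1],\ t\in(0,t_0].$$
A parallel energy estimate gives boundedness of $e^{-tK}$ as a $C_0$-semigroup on $H^\alpha_x\cap H^1_v$, providing control of the linear term $e^{-tK}h_0$ uniformly on $[0,T]$.

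For the nonlinear term $R[h]=\nabla_x\psi\cdot\nabla_v h-\tfrac{\nu}{\sigma}(v\cdot\nabla_x\psi)h$, the restrictions $d=3$ and $\alpha>1/2$ are exactly what make the Sobolev embedding $\mathscr{L}^6_\alpha(\mathbb{R}^3)\hookrightarrow L^\infty(\mathbb{R}^3)$ hold, so $\nabla_x\psi$ can be pulled out pointwise. Integration by parts against the Gaussian factor of $f_\infty$ yields the weighted Hardy-type bound $\|vh\|_{L^2(\mathbb{R}^{6},f_\infty)}\leq C\bigl(\|h\|_{L^2(\mathbb{R}^{6},f_\infty)}+\|\nabla_v h\|_{L^2(\mathbb{R}^{6},f_\infty)}\bigr)$, and the Riesz-kernel representation of $\nabla_x\psi$ together with Hardy--Littlewood--Sobolev and the smoothness of $f_\infty$ give the Poisson estimate $\|\nabla_x\psi\|_{\mathscr{L}^6_\alpha(\mathbb{R}^3)}\leq C\|h\|_{H^\alpha_x(\mathbb{R}^{6},f_\infty)}$. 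Chaining these yields
$$\|R[h]\|_{L^2(\mathbb{R}^{6},f_\infty)}\leq C\,\|h\|_{H^\alpha_x(\mathbb{R}^{6},f_\infty)}\,\bigl(\|h\|_{H^1_v(\mathbb{R}^{6},f_\infty)}+\|h\|_{L^2(\mathbb{R}^{6},f_\infty)}\bigr),$$
together with an analogous bilinear bound for $R[h_1]-R[h_2]$ giving the Lipschitz property. Inserting into $\Phi$ produces a time integral $\int_0^t(t-s)^{-3\alpha/2}\,ds\lesssim T^{1-3\alpha/2}$, finite precisely because $\alpha<2/3$; the matching $H^1_v$-component uses the integrable weight $(t-s)^{-1/2}$. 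Choosing $T$ small enough makes $\Phi$ a self-map and a contraction on the ball, producing the unique mild solution, and the continuity of $\nabla_x\psi$ in $\mathscr{L}^6_\alpha$ follows directly from the Poisson estimate.

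The main obstacle, and what requires the most careful bookkeeping, is the interpolation step: \eqref{t^3} controls the unweighted gradient $\nabla_x h$, whereas the norm of $H^\alpha_x(\mathbb{R}^{6},f_\infty)$ is defined via $(1-\Delta_x)^{\alpha/2}(hf_\infty^{1/2})$. Reconciling them forces me to absorb commutator terms involving $\nabla_x(V+\phi_\infty)$ into $L^2(\mathbb{R}^{6},f_\infty)$-norms of $h$, which is where (A2) and the regularity of $\phi_\infty$ from Section 3 become indispensable. A similar care is required for the Poisson estimate in $\mathscr{L}^6_\alpha$, since a fractional $x$-derivative must be transported simultaneously through the velocity integration and the Riesz potential, and the band $\alpha\in(1/2,2/3)$ is the narrow window where both the pointwise control of $\nabla_x\psi$ and the time-integrability of the semigroup kernel coexist.
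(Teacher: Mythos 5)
Your proposal is correct and follows essentially the same route as the paper: the mild formulation with a contraction argument in $C\left([0,\tau];H^{\alpha}_x\cap H^{1}_v\right)$, semigroup smoothing bounds of order $t^{-3\alpha/2}$ and $t^{-1/2}$ obtained from Theorem \ref{th:lin}$\,(ii)$ by interpolation (with (A2) and the regularity of $\phi_{\infty}$ absorbing the weight terms, as in \eqref{norm H_x^1}), the bilinear estimate $\|R[h]\|_{L^2(\mathbb{R}^6,f_\infty)}\lesssim \|h\|_{H^{\alpha}_x}\|h\|_{H^{1}_v}$ via the embedding $\mathscr{L}^{6}_{\alpha}(\mathbb{R}^3)\hookrightarrow L^{\infty}$ for $\alpha>\tfrac12$ and the weighted bound \eqref{Vil.s lemma2}, the integrability condition $\alpha<\tfrac23$ for the kernel $(t-s)^{-3\alpha/2}$, and the standard restart argument for the blow-up alternative. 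The only cosmetic difference is that the paper justifies boundedness of $e^{-tK}$ on $H^{\alpha}_x$ by citing an interpolation-of-semigroups result rather than your ``parallel energy estimate,'' which does not change the structure of the argument.
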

If the initial data $h_0$ is small (i.e. $f_0$ is close to $f_{\infty}),$ then there is a unique global solution and it decays exponentially:
\begin{theorem}[\textbf{Global well-posedness}]\label{th:VPFP}
Let $d=3,$ $ \alpha \in \left(\frac{1}{2}, \frac{2}{3}\right),$  the assumptions $(A1),$ $(A2)$ and $(A3)$ hold. Let  $h_0\in H^{\alpha}_x(\mathbb{R}^{6}, f_{\infty})\cap H^{1}_v(\mathbb{R}^{6}, f_{\infty}), $  $\displaystyle \int_{\mathbb{R}^{6}}h_0 f_{\infty} dxdv=0,$    $$||  h_0||_{H^{\alpha }_x (\mathbb{R}^{6}, f_{\infty})}\leq \delta_1 \, \, \, \, \, \text{and} \, \, \, \, \, ||  h_0||_{H_v^{1 } (\mathbb{R}^{6}, f_{\infty})}
\leq \delta_2$$
for  explicitly computable constants $\delta_1, \, \delta_2>0$ (given in the proof).  Then \eqref{hVPFP} has  a unique global mild solution  $$h\in C \left([0, \infty); H_x^{\alpha} (\mathbb{R}^{6}, f_{\infty})\right)\cap C \left([0, \infty); H_v^{1} (\mathbb{R}^{6}, f_{\infty})\right)$$
and $$|\nabla_x \psi| \in C \left([0,\infty); \mathscr{L}^{6}_{\alpha}(\mathbb{R}^3)\right).$$  Moreover, for any $\lambda_1 \in (0,\lambda),$ there are explicitly computable constants  $C_5>0,$ $C_6>0$ and $C_7>0$ (independent of $h_0,$ but depending on $\delta_1$ and $\delta_2$) such that
$$||  h(t)||_{H_x^{\alpha} (\mathbb{R}^{6}, f_{\infty})}\leq C_5 e^{-\lambda_1 t},$$
$$||  h(t)||_{H_v^{1} (\mathbb{R}^{6}, f_{\infty})}\leq  C_6 e^{-\lambda_1 t}$$
and
$$|| \nabla_x \psi(t)||_{\mathscr{L}^{6}_{\alpha}(\mathbb{R}^3)}\leq C_7 e^{-\lambda_1 t}$$
hold, where $\lambda$ is the decay rate obtained for the linearized system in Theorem \ref{th:lin}.  
\end{theorem}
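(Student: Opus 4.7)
I rewrite \eqref{hVPFP} as the integral equation \eqref{INTe} and run a contraction argument for
$$\Phi(h)(t) := e^{-tK}h_0 + \int_0^t e^{-(t-s)K} R[h(s)]\,ds$$
on the time-weighted space
$$\mathcal{X} := \Big\{ h \in C([0,\infty); H^\alpha_x(\mathbb{R}^6, f_\infty)\cap H^1_v(\mathbb{R}^6, f_\infty)) : \|h\|_{\mathcal{X}} < \infty\Big\},\quad \|h\|_{\mathcal{X}} := \sup_{t\ge 0} e^{\lambda_1 t}\big(\|h(t)\|_{H^\alpha_x(f_\infty)}+\|h(t)\|_{H^1_v(f_\infty)}\big).$$
Local existence and the blow-up criterion from Theorem \ref{Short time} are already in hand, so it suffices to produce a uniform a priori $\mathcal{X}$-bound for small data; this will preclude $t_{\max}<\infty$ and simultaneously deliver the stated exponential decay. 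Conservation of mass (preserved by $e^{-tK}$) keeps the zero-mean hypothesis of Theorem \ref{th:lin}(iii) valid along the flow.

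\textbf{Semigroup and nonlinear estimates.} Interpolating \eqref{t^3} with the $L^2(f_\infty)$-contractivity yields $\|e^{-tK}g\|_{H^\alpha_x(f_\infty)}\le C t^{-3\alpha/2}\|g\|_{L^2(f_\infty)}$ on $(0,1]$, while \eqref{t^2} gives $\|e^{-tK}g\|_{H^1_v(f_\infty)}\le C t^{-1/2}\|g\|_{L^2(f_\infty)}$; for $t\ge 1$, applying the smoothing on $[0,1/2]$ and then \eqref{decay} on the remaining interval bounds both norms by $Ce^{-\lambda t}\|g\|_{L^2(f_\infty)}$. Unifying these one obtains, for zero-mean $g$,
$$\|e^{-tK}g\|_{H^\alpha_x(f_\infty)}+\|e^{-tK}g\|_{H^1_v(f_\infty)}\le C_\star \min(t^{-3\alpha/2},1)\, e^{-\lambda t}\|g\|_{L^2(f_\infty)}.$$
For the nonlinearity $R[h]=\nabla_x\psi\cdot\nabla_v h - \frac{\nu}{\sigma}(v\cdot\nabla_x\psi)h$, I estimate $\|R[h]\|_{L^2(f_\infty)}\le\|\nabla_x\psi\|_{L^\infty}\big(\|\nabla_v h\|_{L^2(f_\infty)}+\frac{\nu}{\sigma}\|\,|v|h\,\|_{L^2(f_\infty)}\big)$; the $v$-moment is controlled by $\|h\|_{H^1_v(f_\infty)}$ through integration by parts against the Gaussian factor $M(v)$ in $f_\infty$, and $-\Delta_x\psi=\int hf_\infty\,dv$ combined with Calder\'on--Zygmund regularity and the definition \eqref{H_x^1} gives $\|\nabla_x\psi\|_{\mathscr{L}^6_\alpha(\mathbb{R}^3)}\le C\|h\|_{H^\alpha_x(f_\infty)}$. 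The fractional Sobolev embedding $\mathscr{L}^6_\alpha(\mathbb{R}^3)\hookrightarrow L^\infty(\mathbb{R}^3)$, valid precisely when $\alpha>\tfrac{1}{2}=\tfrac{3}{6}$, then produces the bilinear bound $\|R[h(s)]\|_{L^2(f_\infty)}\le C\|h(s)\|_{H^\alpha_x(f_\infty)}\|h(s)\|_{H^1_v(f_\infty)}$.

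\textbf{Closing the fixed point.} Inserting these into Duhamel gives, for $h\in B_R:=\{h\in\mathcal{X}:\|h\|_{\mathcal{X}}\le R\}$,
$$e^{\lambda_1 t}\big(\|\Phi(h)(t)\|_{H^\alpha_x(f_\infty)}+\|\Phi(h)(t)\|_{H^1_v(f_\infty)}\big)\le C(\delta_1+\delta_2) + CR^2\sup_{t\ge 0}\int_0^t \min((t-s)^{-3\alpha/2},1)\,e^{-(\lambda-\lambda_1)(t-s)}e^{-\lambda_1 s}\,ds.$$
The supremum is finite exactly because $3\alpha/2<1$ (so the singularity at $s=t$ is integrable) and $\lambda_1<\lambda$ (so the exponential weights close at $s=0$). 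Choosing $\delta_1+\delta_2$ and $R$ small enough gives $\Phi(B_R)\subseteq B_R$; running the identical computation on $\Phi(h_1)-\Phi(h_2)$, with $R[h_1]-R[h_2]$ expanded in the natural bilinear way, makes $\Phi$ a strict contraction. The resulting fixed point is the global mild solution, and the advertised decay in $H^\alpha_x$ and $H^1_v$ is tautological from the definition of $\mathcal{X}$; the estimate on $\nabla_x\psi$ follows from the same Poisson bound used above. The explicit constants $\delta_1,\delta_2,C_5,C_6,C_7$ can be read off from the explicit constants in Theorem \ref{th:lin}.

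\textbf{Main obstacle.} The delicate window $\alpha\in(\tfrac12,\tfrac23)$ is forced simultaneously from both sides: $\alpha>\tfrac12$ is needed so that $\mathscr{L}^6_\alpha\hookrightarrow L^\infty$, without which the product $\nabla_x\psi\cdot\nabla_v h$ cannot be closed in $L^2(f_\infty)$; meanwhile $\alpha<\tfrac23$ is needed so that the smoothing singularity $(t-s)^{-3\alpha/2}$ is integrable. The real technical work lies in showing that the short-time regularization constants $C_1,C_2$, the long-time rate $\lambda$, and the weighted $v$-moment control in $R[h]$ combine quantitatively so that the smallness thresholds $\delta_1,\delta_2$ are genuinely explicit and constructive, as demanded by the statement.
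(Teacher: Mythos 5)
Your argument is correct in substance, but it takes a genuinely different route from the paper. You run a single contraction for the full Duhamel map $\Phi(h)(t)=e^{-tK}h_0+\int_0^t e^{-(t-s)K}R[h(s)]\,ds$ on a small ball of the exponentially weighted space whose norm controls both $\|h(t)\|_{H^{\alpha}_x}$ and $\|h(t)\|_{H^1_v}$, using the smoothing/decay bounds of Lemma \ref{lem: e^tK} together with the bilinear estimate $\|R[h]\|_{L^2(f_\infty)}\lesssim \|h\|_{H^{\alpha}_x}\|h\|_{H^1_v}$ (the paper's \eqref{gr.psi in L^infty} and \eqref{Vil.s lemma2}). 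The paper instead argues in two steps: Lemma \ref{Lem: Fix} freezes the self-consistent field $\nabla_x\psi_h$ and solves the resulting equation, which is linear in the unknown $w$, globally in time (local contraction, continuation, and then the singular Gr\"onwall inequality of Lemma \ref{lemma:Gron} to convert the implicit Duhamel bound into global weighted estimates), and the proof of Theorem \ref{th:VPFP} then contracts the solution map $U[h]=w$ in the $X$-norm alone. Your route is shorter and bypasses Lemma \ref{lemma:Gron} entirely, because in the direct iteration the unknown enters the Duhamel integral only through the quadratic term, which the ball already controls. What the paper's detour buys is that the frozen-field problem is solved for arbitrary $h\in X$ without smallness, and the smallness conditions \eqref{int2} keep the roles of $\delta_1$ and $\delta_2$ separate (only $\delta_2$ enters the contraction constant), whereas your scheme couples $\delta_1+\delta_2$ and the ball radius symmetrically; both arguments, yours and the paper's, yield uniqueness only within the chosen ball of the weighted space.

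Three small repairs are needed, none structural. First, your unified semigroup bound should carry $\max(t^{-3\alpha/2},1)$ (equivalently $1+t^{-3\alpha/2}$), not $\min$: as written the kernel is bounded near $s=t$ and the bound is false for short times on $L^2$ data; your subsequent discussion (integrability of the singularity, hence $\alpha<\tfrac{2}{3}$) shows you intended the singular kernel. Second, to apply the decaying estimates \eqref{ee:H_x^alpha} and \eqref{ee:H_v^1} to $e^{-(t-s)K}R[h(s)]$ you must check $R[h(s)]\in\mathcal{H}$, i.e.\ $\int_{\mathbb{R}^{6}}R[h(s)]f_\infty\,dxdv=0$; conservation of mass for $h$ itself is not that statement, but the fact is immediate since $(\nabla_v h-\tfrac{\nu}{\sigma}vh)f_\infty=\nabla_v(hf_\infty)$ and one integrates by parts in $v$ — this is the paper's \eqref{incH}. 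Third, the condition $\lambda_1<\lambda$ is used (as in Lemma \ref{lem: e^tK}) to trade decay rate against the short-time prefactor; it is not what makes your Duhamel supremum finite at $s=0$, which follows from the factor $e^{-\lambda_1 s}$ alone. With these adjustments the proposal is a complete and somewhat more economical proof of the theorem.
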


In Theorem \ref{th:VPFP} we assume the initial data $\displaystyle h_0=(f_0-f_{\infty})/f_{\infty}$ is in a neighborhood of zero in $H^{\alpha}_x(\mathbb{R}^{2d}, f_{\infty}))\cap H^{1}_v(\mathbb{R}^{2d}, f_{\infty}),$ and  the radius of this neighborhood can be estimated explicitly. We have to make this assumption because of the difficulties coming from the nonlinearity of the system. The smallness of $\displaystyle (f_0-f_{\infty})/f_{\infty}$  is a common assumption to study various nonlinear kinetic equations (e.g.  \cite{ BGK, Bedro, Tris, Hwang.Jang, Mou.Neu, Des.Vil, Guo}) and in some cases this assumptions  is necessary. It is often required the smallness of  $\displaystyle (f_0-f_{\infty})/{f_{\infty}}$ in more regular Sobolev spaces, for example in $H^s(\mathbb{R}^{2d}, f_{\infty})$ with $s\geq d.$ While we require this assumption in a larger and less regular space (i.e., $H^{\alpha}_x(\mathbb{R}^{2d}, f_{\infty}))\cap H^{1}_v(\mathbb{R}^{2d}, f_{\infty}))$ and so our result improves the previous works in this respect.   It would be interesting to generalize our results for arbitrary large initial data, away from the steady state. Yet, this extension is not within reach so far and will be a matter of further study.

We believe that Theorem \ref{Short time} and Theorem \ref{th:VPFP} are the first well-posedness and exponential stability results for  a  large class of potentials $V$ (i.e, the potentials satisying Assumption \ref{A1}).   Most of the previous results were obtained when $V=0.$  Our results hold for any parameters $\nu>0 $ and $\sigma>0,$ they do not need to be small or large as in  \cite{Herda, Bedro, Tris}.

\section{Preliminaries}
\subsection{Steady state}
In this section, we show that there is a unique  solution to \eqref{eq.steady state} and we establish some regularity estimates  in the Sobolev spaces.
\begin{lemma}[{\cite[Section 2]{Dolbeault1991STATIONARYSI}}] \label{s.t.1}
 Let $d\geq 3$ and  $e^{-\frac{\nu}{\sigma}V}\in L^1(\mathbb{R}^d).$
Then \eqref{eq.steady state} has a unique solution $\phi_{\infty}\geq 0$ such that $$\phi_{\infty} \in L^{\frac{d}{d-2},\infty}(\mathbb{R}^d)\,\,\, \text{ and } \, \, \, |\nabla\phi_{\infty}|\in L^{\frac{d}{d-1},\infty}(\mathbb{R}^d),$$
where $L^{p,\infty}(\mathbb{R}^d)\colonequals\left\{g \in L^{1}_{loc}(\mathbb{R}^d): \sup_{\lambda> 0}\left[\lambda^p \text{meas}(\{x\in \mathbb{R}^d:\, \, g(x)>\lambda\})\right]< \infty\right\},$  $  p>1.$
\end{lemma}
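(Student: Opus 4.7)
The plan is to cast \eqref{eq.steady state} as a convex variational problem, extract a minimizer by the direct method, derive uniqueness from strict convexity, and read off the weak-$L^p$ bounds from the explicit Newton-kernel representation.

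First, since $d\geq 3$, I introduce the Newton kernel $G(x)=\frac{1}{(d-2)|\mathbb{S}^{d-1}|}|x|^{-(d-2)}$, so that any admissible solution satisfies $\phi_{\infty}=G*\rho_{\infty}$ with $\rho_{\infty}\colonequals -\Delta_x\phi_{\infty}$ a probability density. I then recognize \eqref{eq.steady state} as the Euler-Lagrange equation, with Lagrange multiplier for the mass constraint, of the free energy functional
\[
\mathcal{F}[\rho]\colonequals \frac{\sigma}{\nu}\int_{\mathbb{R}^d}\rho\ln\rho\,dx+\int_{\mathbb{R}^d}V\rho\,dx+\frac{1}{2}\int_{\mathbb{R}^{2d}}\rho(x)\,G(x-y)\,\rho(y)\,dx\,dy
\]
over $\{\rho\geq 0:\int_{\mathbb{R}^d}\rho\,dx=1\}$. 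Coercivity of the first two pieces follows from the identity
\[
\frac{\sigma}{\nu}\int \rho\ln\rho\,dx+\int V\rho\,dx=\frac{\sigma}{\nu}\int \rho\ln\frac{\rho}{Z_0^{-1}e^{-\frac{\nu}{\sigma}V}}\,dx-\frac{\sigma}{\nu}\ln Z_0,
\]
with $Z_0\colonequals \int e^{-\frac{\nu}{\sigma}V}\,dx<\infty$ by hypothesis; the right-hand side is a non-negative Kullback-Leibler divergence shifted by a constant. The interaction term is non-negative by positive definiteness of $G$. Standard lower semicontinuity (Fatou for the entropy and the quadratic term, tightness produced by the Gibbs comparison for the linear potential term) then yields existence of a minimizer $\rho_{\infty}$ along any minimizing sequence.

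Uniqueness follows because $\mathcal{F}$ is strictly convex: the entropy functional is strictly convex on probability densities, so if $\rho_1\neq\rho_2$ were two minimizers, the midpoint $(\rho_1+\rho_2)/2$ would give a strictly smaller value. The Euler-Lagrange equation for the minimizer reproduces \eqref{eq.steady state}, and $\phi_{\infty}=G*\rho_{\infty}\geq 0$ because $G,\rho_{\infty}\geq 0$. Note that $\int e^{-\frac{\nu}{\sigma}(V+\phi_{\infty})}\,dx\leq Z_0<\infty$ thanks to $\phi_{\infty}\geq 0$, so the normalizing denominator is finite.

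For the weak-$L^p$ estimates, I use the convolution representations $\phi_{\infty}=G*\rho_{\infty}$ and $\nabla_x\phi_{\infty}=\frac{1}{|\mathbb{S}^{d-1}|}\frac{x}{|x|^d}*\rho_{\infty}$. Since $|x|^{-(d-2)}\in L^{d/(d-2),\infty}(\mathbb{R}^d)$ and $|x|^{-(d-1)}\in L^{d/(d-1),\infty}(\mathbb{R}^d)$, the generalized Young inequality $L^1*L^{p,\infty}\hookrightarrow L^{p,\infty}$ applied to the probability density $\rho_{\infty}\in L^1(\mathbb{R}^d)$ delivers the two Marcinkiewicz bounds. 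The step I expect to be the main obstacle is the tightness/concentration-compactness argument ensuring that a minimizing sequence does not lose mass at infinity; the Gibbs comparison above gives the required control since $e^{-\frac{\nu}{\sigma}V}\in L^1$ forces $V$ to grow on average at infinity and makes the relative-entropy piece coercive.
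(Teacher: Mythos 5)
The paper does not actually prove this lemma: it is quoted verbatim from Dolbeault's 1991 paper (the cited Section 2 there also exploits the convex free-energy/Poisson--Boltzmann--Emden structure), so your proposal has to stand on its own rather than be matched against an internal argument. On its own terms, most of it is sound: rewriting the entropy plus potential term as a relative entropy with respect to $\mu_0\colonequals Z_0^{-1}e^{-\frac{\nu}{\sigma}V}$ is the right way to make the functional well defined under the sole hypothesis $e^{-\frac{\nu}{\sigma}V}\in L^1$, bounded relative entropy with respect to a tight reference probability measure does give tightness of minimizing sequences, and the final step is correct: $\phi_{\infty}=G\ast\rho_{\infty}$, $\nabla_x\phi_{\infty}=\frac{1}{|\mathbb{S}^{d-1}|}\frac{x}{|x|^d}\ast\rho_{\infty}$ with $\rho_{\infty}\in L^1$, together with $|x|^{-(d-2)}\in L^{\frac{d}{d-2},\infty}$, $|x|^{-(d-1)}\in L^{\frac{d}{d-1},\infty}$ and the bound $\|k\ast f\|_{L^{p,\infty}}\lesssim\|k\|_{L^{p,\infty}}\|f\|_{L^1}$ (valid for $p>1$, where $L^{p,\infty}$ has an equivalent Banach norm), yields exactly the two Marcinkiewicz bounds and $\phi_{\infty}\geq 0$.

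The genuine gap is in the uniqueness step. Strict convexity gives uniqueness of \emph{minimizers} of $\mathcal{F}$, but the lemma asserts uniqueness of \emph{solutions of \eqref{eq.steady state}} in the class $\phi_{\infty}\geq 0$, $\phi_{\infty}\in L^{\frac{d}{d-2},\infty}$, $|\nabla\phi_{\infty}|\in L^{\frac{d}{d-1},\infty}$. To conclude, you must show that every solution in this class produces an admissible density $\rho=Z_{\phi}^{-1}e^{-\frac{\nu}{\sigma}(V+\phi)}$ with \emph{finite} free energy which is a critical point, and then invoke convexity to identify it with the unique minimizer. Finiteness is not automatic here: the relative-entropy term at such a $\rho$ equals $-\frac{\nu}{\sigma}\int\phi\rho\,dx+\ln\frac{Z_0}{Z_{\phi}}$, so you need $\int\phi\rho\,dx<\infty$, and this does not follow from $\phi\in L^{\frac{d}{d-2},\infty}$ and $\rho\in L^1$ alone (at this stage of the paper $V$ is not assumed bounded below, so you cannot use $\rho\in L^\infty$ either). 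You must either verify this integrability for the stated solution class, or replace the convexity argument by a direct uniqueness proof for the equation (e.g.\ a monotonicity argument testing the difference of two solutions against $\phi_1-\phi_2$ and using that $s\mapsto e^{-\frac{\nu}{\sigma}s}$ is decreasing, which in turn requires justifying the integration by parts under the weak-$L^p$ decay only). A smaller, fixable omission of the same kind: the Euler--Lagrange derivation needs $\rho_{\infty}>0$ a.e.\ and a word on why the constraint multiplier can be absorbed into the normalization, and the lower semicontinuity of the interaction term along the weakly convergent minimizing sequence deserves one line (convexity of the quadratic form with nonnegative kernel). With these points supplied, your route is a legitimate alternative to simply citing the reference.
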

As we have the existence by Lemma \ref{s.t.1}, we next establish some regularity for $\phi_{\infty}.$ The important tool here is the Hardy-Littlewood-Sobolev inequality:
 \begin{theorem}[{\cite[Theorem 7.25]{giaquinta2013introduction}}]
Let $p,q \in (1,\infty),$ $a\in (0,d)$ such that $\frac{1}{q}-\frac{1}{p}+\frac{a}{d}=0.$ There exists a constant $C_{HLS}>0$ such that, for all $g\in L^p(\mathbb{R}^d),$
\begin{equation*}\label{HLS}
\left|\left|g\ast \frac{1}{|x|^{d-a}}\right|\right|_{L^q(\mathbb{R}^d)}\leq C_{HLS}||g||_{L^p(\mathbb{R}^d)}.
\end{equation*}
\end{theorem}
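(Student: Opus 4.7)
The plan is to establish the Hardy-Littlewood-Sobolev inequality by the classical route: first prove a weak-type $(p, q)$ bound for the Riesz potential $T g := g \ast |\cdot|^{-(d-a)}$ via a kernel-splitting argument, then upgrade to the strong-type bound by Marcinkiewicz interpolation. Since the hypotheses place $(p, q)$ strictly inside $(1,\infty)^2$ on the scaling line $\tfrac{1}{q} = \tfrac{1}{p} - \tfrac{a}{d}$, interpolation between two weak-type estimates at nearby admissible pairs will yield the desired strong-type bound.

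First I would fix $\lambda > 0$ and split the kernel at a radius $R = R(\lambda)$ to be chosen: write $|x|^{-(d-a)} = K_R(x) + K^R(x)$ with $K_R$ supported in the ball $B_R(0)$ and $K^R$ in its complement. Direct integration in radial coordinates yields $\|K_R\|_{L^1(\mathbb{R}^d)} \leq C R^a$ (using $a > 0$) and $\|K^R\|_{L^{p'}(\mathbb{R}^d)} \leq C R^{a - d/p}$ (using $a < d/p$, which is equivalent to $q < \infty$). Young's convolution inequality then delivers
\begin{equation*}
\|K_R \ast g\|_{L^p(\mathbb{R}^d)} \leq C R^a \|g\|_{L^p(\mathbb{R}^d)}, \qquad \|K^R \ast g\|_{L^\infty(\mathbb{R}^d)} \leq C R^{a-d/p} \|g\|_{L^p(\mathbb{R}^d)}.
\end{equation*}

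Next I would choose $R = R(\lambda)$ so that the $L^\infty$ piece equals $\lambda/2$; this is possible because $R \mapsto R^{a - d/p}$ is strictly decreasing under $a < d/p$. With this choice, $\{|T g| > \lambda\} \subseteq \{|K_R \ast g| > \lambda/2\}$, and Chebyshev's inequality combined with the first bound yields $|\{|Tg| > \lambda\}| \leq C (R^a \|g\|_{L^p}/\lambda)^p$. Substituting the chosen $R$ and using $\tfrac{1}{q} = \tfrac{1}{p} - \tfrac{a}{d}$, a short exponent computation collapses this to $|\{|Tg| > \lambda\}| \leq C'(\|g\|_{L^p}/\lambda)^q$, which is exactly the weak-type $(p, q)$ inequality. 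Applying this at two admissible pairs $(p_0, q_0)$ and $(p_1, q_1)$ straddling $(p, q)$ on the scaling line and invoking the Marcinkiewicz interpolation theorem then produces the strong-type inequality with some constant $C_{HLS}>0$.

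The main obstacle, and the source of sharpness, is the endpoint behavior: the splitting argument requires $p \in (1, d/a)$ so that $K_R \in L^1$ and $K^R \in L^{p'}$ hold simultaneously, and Marcinkiewicz interpolation demands strict interior pairs. At $p = 1$ or $q = \infty$ the strong-type estimate genuinely fails, with only weak-type bounds surviving, so the exclusion of these endpoints in the hypothesis is necessary and not an artifact of the method.
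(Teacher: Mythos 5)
The paper does not prove this statement at all: it is imported verbatim from \cite[Theorem 7.25]{giaquinta2013introduction} and used as a black box, so there is no internal proof to compare against. Your argument is the classical one (essentially Stein's proof of the Riesz-potential estimate): split the kernel at radius $R$, get $\|K_R\|_{L^1}\leq CR^a$ and $\|K^R\|_{L^{p'}}\leq CR^{a-d/p}$ (valid precisely because $a<d/p$, i.e.\ $q<\infty$), choose $R$ so the $L^\infty$ piece is $\lambda/2$, and apply Chebyshev; the exponent bookkeeping does collapse to $\bigl(\|g\|_{L^p}/\lambda\bigr)^{q}$ since $q=d/(d/p-a)$, so the weak-type $(p,q)$ bound is correct, and Marcinkiewicz interpolation between two interior admissible pairs (note $p_i\leq q_i$ holds on the scaling line, as required for the off-diagonal version) gives the strong bound. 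This is a complete and correct proof; an equally standard alternative, used in several textbooks, is Hedberg's trick bounding $|g\ast|x|^{-(d-a)}|$ pointwise by $(Mg)^{1-ap/d}\|g\|_{L^p}^{ap/d}$ and invoking the maximal-function theorem, which avoids interpolation. One small imprecision in your closing remark: at $p=1$ the weak-type $(1,d/(d-a))$ bound does survive, but at the other endpoint $p=d/a$ (i.e.\ $q=\infty$) even the weak-type (= $L^\infty$) bound fails in general; this does not affect the proof itself.
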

\begin{lemma}\label{s.t.}
Let $d\geq 3, $  $V$ be bounded from below and $e^{-\frac{\nu}{\sigma}V}\in L^1(\mathbb{R}^d).$
Then the solution $\phi_{\infty}$ of \eqref{eq.steady state} satisfies
\begin{equation*}
\phi_{\infty}\in W^{2,q}(\mathbb{R}^d)
\end{equation*}
 for all $ q \in
 (\frac{d}{d-2},\infty).$
  Moreover, if $V\in C^1(\mathbb{R}^d)$  and
$|\nabla_x V|e^{-\frac{\nu}{\sigma}V}\in L^r(\mathbb{R}^d)$ for some $r\in
 (\frac{d}{d-2},\infty),$ then
\begin{equation*}
\phi_{\infty}\in W^{3,r}(\mathbb{R}^d).
\end{equation*}
 In particular, if $r>d,$ then $\phi_{\infty}\in W^{2,\infty}(\mathbb{R}^d).$
\end{lemma}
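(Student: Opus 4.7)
The core observation is that, since $\phi_{\infty}\geq 0$ by Lemma \ref{s.t.1} and $V$ is bounded from below,
$$
0\leq \rho_{\infty}(x):=\frac{e^{-\frac{\nu}{\sigma}[V(x)+\phi_{\infty}(x)]}}{\int_{\mathbb{R}^d}e^{-\frac{\nu}{\sigma}[V+\phi_{\infty}]}dx'}\leq C\, e^{-\frac{\nu}{\sigma}V(x)},
$$
so $\rho_{\infty}\in L^{1}(\mathbb{R}^d)\cap L^{\infty}(\mathbb{R}^d)$, hence $\rho_{\infty}\in L^{p}(\mathbb{R}^d)$ for every $p\in[1,\infty]$. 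From there I would run elliptic regularity on $-\Delta_x\phi_{\infty}=\rho_{\infty}$ twice: once to prove $\phi_{\infty}\in W^{2,q}$ and once (by differentiating the equation) to prove $\phi_{\infty}\in W^{3,r}$.

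For the first conclusion, I use the Newton-potential representation
$$
\phi_{\infty}=\frac{1}{(d-2)|\mathbb{S}^{d-1}|}\,\frac{1}{|x|^{d-2}}\ast\rho_{\infty},\qquad \nabla_x\phi_{\infty}=\frac{1}{|\mathbb{S}^{d-1}|}\,\frac{x}{|x|^{d}}\ast\rho_{\infty}.
$$
Given $q\in(d/(d-2),\infty)$, the Hardy-Littlewood-Sobolev inequality with $a=2$ (choose $p$ via $\tfrac{1}{p}=\tfrac{1}{q}+\tfrac{2}{d}$, which indeed yields $p>1$ precisely because $q>d/(d-2)$) gives $\phi_{\infty}\in L^{q}(\mathbb{R}^d)$. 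The same inequality with $a=1$, applied to $|\nabla_x\phi_{\infty}|\leq c_d\,|x|^{-(d-1)}\ast\rho_{\infty}$, yields $\nabla_x\phi_{\infty}\in L^{q}(\mathbb{R}^d)$ (the required $q>d/(d-1)$ is implied by $q>d/(d-2)$). Finally, $\rho_{\infty}\in L^{q}$ together with the Calder\'on-Zygmund estimate for the Laplacian on $\mathbb{R}^d$ gives $D^{2}\phi_{\infty}\in L^{q}(\mathbb{R}^d)$, and the three bounds combined give $\phi_{\infty}\in W^{2,q}(\mathbb{R}^d)$.

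For the $W^{3,r}$ conclusion I differentiate the Poisson equation, so the task reduces to showing $\nabla_x\rho_{\infty}\in L^{r}(\mathbb{R}^d)$. A direct computation yields
$$
\nabla_x\rho_{\infty}=-\frac{\nu}{\sigma}\bigl(\nabla_x V+\nabla_x\phi_{\infty}\bigr)\rho_{\infty},
$$
and using $\rho_{\infty}\leq Ce^{-\frac{\nu}{\sigma}V}$ together with the fact that $e^{-\frac{\nu}{\sigma}V}$ is bounded (since $V$ is bounded from below),
$$
|\nabla_x\rho_{\infty}|\leq C\bigl(|\nabla_x V|\,e^{-\frac{\nu}{\sigma}V}+|\nabla_x\phi_{\infty}|\bigr).
$$
The first term lies in $L^{r}$ by hypothesis, and the second lies in $L^{r}$ by the first part of the lemma, since $r>d/(d-2)$. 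Applying the Calder\'on-Zygmund estimate componentwise to $-\Delta_x(\partial_{x_i}\phi_{\infty})=\partial_{x_i}\rho_{\infty}$ then gives $D^{3}\phi_{\infty}\in L^{r}(\mathbb{R}^d)$, and combined with the $W^{2,r}$ bound already established, $\phi_{\infty}\in W^{3,r}(\mathbb{R}^d)$. The last statement ($r>d\Rightarrow \phi_{\infty}\in W^{2,\infty}$) is the Morrey embedding $W^{1,r}(\mathbb{R}^d)\hookrightarrow L^{\infty}(\mathbb{R}^d)$ applied to $\phi_{\infty}$, $\nabla_x\phi_{\infty}$ and $D^{2}\phi_{\infty}$.

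The only mildly delicate point is bookkeeping the admissible exponents in the two HLS applications so that the thresholds $q>d/(d-2)$ and $r>d/(d-2)$ comfortably dominate the weaker threshold $q>d/(d-1)$ required for the gradient, and that the $L^{\infty}$ bound on $e^{-\frac{\nu}{\sigma}V}$ (needed to absorb the nonlinear term $\nabla_x\phi_{\infty}\,\rho_{\infty}$ in $\nabla_x\rho_{\infty}$) is genuinely free of circularity; this is why the argument is linear and non-iterative once Lemma \ref{s.t.1} is in hand.
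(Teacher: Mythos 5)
Your proposal is correct and follows essentially the same route as the paper: bound $\rho_\infty \le C e^{-\frac{\nu}{\sigma}V}\in L^1\cap L^\infty$ using $\phi_\infty\ge 0$ and the lower bound on $V$, apply the Hardy-Littlewood-Sobolev inequality to the Newtonian potential, obtain second derivatives from $L^q$ elliptic theory, then differentiate the equation and bootstrap using the hypothesis on $|\nabla_x V|e^{-\frac{\nu}{\sigma}V}$, concluding with Sobolev embedding when $r>d$. The only cosmetic differences are that you get $\nabla_x\phi_\infty$ and $D^2\phi_\infty$ via HLS and Calder\'on-Zygmund where the paper invokes $W^{2,q}$ regularity for $-\Delta u+u\in L^q$, and in the bootstrap you put the $L^r$ integrability on $\nabla_x\phi_\infty$ and the $L^\infty$ bound on $\rho_\infty$, whereas the paper does the reverse; both bookkeepings are valid.
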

\begin{proof}
The boundedness of $V $  from below, $e^{-\frac{\nu}{\sigma}V}\in L^1(\mathbb{R}^d)$ and $\phi_{\infty}\geq 0 $ imply that \begin{equation}\label{in L^p}
e^{-\frac{\nu}{\sigma}[V+\phi_{\infty}]}\in L^p(\mathbb{R}^d), \, \, \, \, \, \, \forall\, p\in [1,\infty]
\end{equation}
and so
\begin{equation}\label{lap.L^p}
 -\Delta_x \phi_{\infty}= \frac{e^{-\frac{\nu}{\sigma}[V+\phi_{\infty}]}}{ \int_{\mathbb{R}^{d}}e^{-\frac{\nu}{\sigma}[V(x')+\phi_{\infty}(x')]}dx' }\in L^p(\mathbb{R}^d), \, \, \, \, \, \, \forall\, p\in [1,\infty].
\end{equation}
We present $\phi_{\infty}$ as
\begin{equation*}
\phi_{\infty}=
\frac{1}{(d-2)|\mathbb{S}^{d-1}|}\frac{1}{|x|^{d-2}}\ast \left(\frac{e^{-\frac{\nu}{\sigma}[V+\phi_{\infty}]}}{ \int_{\mathbb{R}^{d}}e^{-\frac{\nu}{\sigma}[V(x')+\phi_{\infty}(x')]}dx' }\right).
\end{equation*}
\eqref{in L^p} and the Hardy-Littlewood-Sobolev inequality  show
\begin{equation}\label{psi in L^p p>d/d-2}
\phi_{\infty} \in L^q(\mathbb{R}^d), \, \, \,  \, \, \,\forall\,  q\in \left(\frac{d}{d-2},\infty\right).
\end{equation}
\eqref{lap.L^p} and \eqref{psi in L^p p>d/d-2} yield $-\Delta_x \phi_{\infty}+\phi_{\infty} \in L^q(\mathbb{R}^d) \, \, \, \text{ for all } \, \, \, q\in (\frac{d}{d-2},\infty).$ Thus,  the  elliptic regularity \cite[Section 7.2 and Section 7.3]{giaquinta2013introduction}  shows
\begin{equation*}
\phi_{\infty}\in W^{2,q}(\mathbb{R}^d), \, \, \, \, \forall\, q\in \left(\frac{d}{d-2},\infty\right).
\end{equation*}
In particular, by the Sobolev embedding theorem
\begin{equation}\label{bound d}
\phi_{\infty}\in L^{\infty}(\mathbb{R}^d), \, \, \, \, |\nabla_x \phi_{\infty}|\in L^{\infty}(\mathbb{R}^d).
\end{equation}
We use the bootstrap argument. Because of the assumption $|\nabla_x V|e^{-\frac{\nu}{\sigma}V}\in L^r(\mathbb{R}^d)$ and \eqref{bound d}, we have
\begin{equation*}
 -\Delta_x (\partial_{x_i}\phi_{\infty})=- \frac{\frac{\nu}{\sigma}(\partial_{x_i}V+\partial_{x_i}\phi_{\infty})e^{-\frac{\nu}{\sigma}[V+\phi_{\infty}]}}{ \int_{\mathbb{R}^{d}}e^{-\frac{\nu}{\sigma}[V(x')+\phi_{\infty}(x')]}dx' }\in L^r(\mathbb{R}^d)
\end{equation*}
   for all $i\in \{1,...,d\}.$
Again using the elliptic regularity we obtain $\partial_{x_i} \phi_{\infty}\in W^{2,r}(\mathbb{R}^d).$ If $r>d,$ then the Sobolev embedding theorem provides that $\phi_{\infty}\in W^{2,\infty}(\mathbb{R}^d).$

\end{proof}

\subsection{Poincar\'e type inqualities}
In this section, we present some sufficient conditions on the potential $V$ such that $\rho_{\infty}$ satisfies   Poincar\'e type-inequalities.

\begin{lemma}\label{lemma.Poin.}
Let $V$  be bounded from below, $e^{-\frac{\nu}{\sigma}V} \in L^1(\mathbb{R}^d)$  and $e^{-\frac{\nu}{\sigma}V}$ satisfy the Poincar\'e inequality \eqref{Poin.V}. Then,  there exists a positive constant $\kappa_2$ such that
\begin{equation}\label{Poin.inq.}
 \int_{\mathbb{R}^{2d}}h^2f_{\infty}dxdv-\left(\int_{\mathbb{R}^{2d}}hf_{\infty}dxdv\right)^2\leq \kappa_2 \int_{\mathbb{R}^{2d}}(|\nabla_xh|^2 +|\nabla_v h|^2 )f_{\infty}dxdv
\end{equation}
holds for all $ h\in H^1(\mathbb{R}^{2d}, f_{\infty}).$
\end{lemma}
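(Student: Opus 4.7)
The strategy is the classical two-step argument: first upgrade the Poincaré inequality for $e^{-\frac{\nu}{\sigma}V}$ to one for $\rho_\infty(x)\, dx$ via a bounded perturbation, then tensorize with the Gaussian Poincaré inequality in $v$.

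\textbf{Step 1 (Poincaré for $\rho_\infty$ via Holley--Stroock).} By Lemma \ref{s.t.} and the hypotheses, $\phi_\infty \in L^\infty(\mathbb{R}^d)$ and $\phi_\infty \geq 0$, so the oscillation $\mathrm{osc}(\phi_\infty)\colonequals \sup \phi_\infty - \inf \phi_\infty$ is finite. I would then apply the Holley--Stroock perturbation principle: for any probability density of the form $e^{-U-\Phi}/Z$ with $e^{-U}/Z_0$ satisfying a Poincaré inequality with constant $\kappa$ and $\Phi$ bounded, the perturbed measure satisfies a Poincaré inequality with constant $\kappa \, e^{\mathrm{osc}(\Phi)}$. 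Applying this with $U=\frac{\nu}{\sigma}V$ and $\Phi=\frac{\nu}{\sigma}\phi_\infty$ and using \eqref{Poin.V}, I obtain a constant $\tilde{\kappa}_1 > 0$ such that
\begin{equation*}
\int_{\mathbb{R}^d} g^2 \rho_\infty\, dx - \left(\int_{\mathbb{R}^d} g\, \rho_\infty\, dx\right)^2 \leq \tilde{\kappa}_1 \int_{\mathbb{R}^d} |\nabla_x g|^2\, \rho_\infty\, dx
\end{equation*}
for every $g$ for which the right-hand side is finite.

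\textbf{Step 2 (Gaussian Poincaré in $v$).} The Gaussian measure $M(v)\,dv$ satisfies the standard Poincaré inequality with sharp constant $\sigma/\nu$:
\begin{equation*}
\int_{\mathbb{R}^d} g^2 M\, dv - \left(\int_{\mathbb{R}^d} g\, M\, dv\right)^2 \leq \frac{\sigma}{\nu}\int_{\mathbb{R}^d} |\nabla_v g|^2\, M\, dv.
\end{equation*}
This is classical (e.g.\ Bakry--Emery applied to the uniformly log-concave density $M$).

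\textbf{Step 3 (Tensorization).} Set $\bar{h}(x)\colonequals \int_{\mathbb{R}^d} h(x,v)\,M(v)\,dv$. Decomposing the variance by conditioning on $x$, I would write
\begin{align*}
\int_{\mathbb{R}^{2d}} h^2 f_\infty\, dx dv - \Bigl(\int_{\mathbb{R}^{2d}} h\, f_\infty\, dx dv\Bigr)^2 &= \int_{\mathbb{R}^d} \Bigl[\int_{\mathbb{R}^d} h^2 M\, dv - \bar h(x)^2\Bigr]\rho_\infty\, dx \\
&\quad + \int_{\mathbb{R}^d}\bar h^2 \rho_\infty\, dx - \Bigl(\int_{\mathbb{R}^d}\bar h\, \rho_\infty\, dx\Bigr)^2.
\end{align*}
Bound the first term using Step 2 (applied for each fixed $x$) by $\frac{\sigma}{\nu}\int_{\mathbb{R}^{2d}}|\nabla_v h|^2 f_\infty\, dxdv$. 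For the second term, apply Step 1 to $\bar h$, and then estimate $|\nabla_x \bar h(x)|^2 \leq \int_{\mathbb{R}^d} |\nabla_x h(x,v)|^2 M(v)\,dv$ by the Cauchy--Schwarz (or Jensen) inequality. Combining with $\kappa_2\colonequals \max\{\tilde{\kappa}_1, \sigma/\nu\}$ yields \eqref{Poin.inq.}.

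\textbf{Main obstacle.} No individual step is technically hard; the only non-routine point is justifying the Holley--Stroock step cleanly — i.e., citing or reproving that $\phi_\infty$ being bounded (which requires Lemma \ref{s.t.}, hence the condition $|\nabla_x V|e^{-\frac{\nu}{\sigma}V}\in L^r$ with $r>d$ from Assumption (A1)) is enough to transfer the Poincaré inequality from $e^{-\frac{\nu}{\sigma}V}$ to $\rho_\infty$. Everything else is a standard tensorization of Poincaré inequalities.
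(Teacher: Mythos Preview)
Your proof is correct and follows exactly the same route as the paper: Holley--Stroock to transfer the Poincar\'e inequality from $e^{-\frac{\nu}{\sigma}V}$ to $\rho_\infty$ using boundedness of $\phi_\infty$ (from Lemma~\ref{s.t.}), then tensorization with the Gaussian Poincar\'e inequality in $v$ (the paper simply cites \cite[Proposition~4.3.1]{bakry2014analysis} for this step, whereas you spell out the conditional-variance decomposition explicitly). One minor correction to your ``main obstacle'' remark: the $L^\infty$ bound on $\phi_\infty$ in Lemma~\ref{s.t.} already follows from $V$ bounded below and $e^{-\frac{\nu}{\sigma}V}\in L^1(\mathbb{R}^d)$ alone (via $\phi_\infty\in W^{2,q}$ and Sobolev embedding), so the additional hypothesis $|\nabla_x V|e^{-\frac{\nu}{\sigma}V}\in L^r$ is not needed here.
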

\begin{proof}
 $\phi_{\infty}$ is bounded  by Lemma \ref{s.t.}. Then the Holley-Stroock perturbation argument \cite{Holley1987LogarithmicSI} implies that $\rho_{\infty}$ satisfies the Poincar\'e inequality $$ \int_{\mathbb{R}^d}h^2\rho_{\infty}dx-\left(\int_{\mathbb{R}^d}h\rho_{\infty}dx\right)^2\leq \kappa'_1\int_{\mathbb{R}^d}|\nabla_xh|^2 \rho_{\infty}dx$$ for some constant $\kappa'_1>0.$
Since the Gaussian distribution $
 M(v)=\frac{e^{-\frac{\nu}{\sigma}|v|^2/2}}{ (2\pi \sigma/\nu)^{d/2}}$ satisfies the Poincar\'e inequality,  \cite[Proposition 4.3.1]
{bakry2014analysis} shows that $f_{\infty}=\rho_{\infty}M$ satisfies \eqref{Poin.inq.}.
\end{proof}
\begin{lemma}\label{Vill estimates}
\begin{itemize}
\item[(i)] Let $V$ satisfy the assumptions (A1) and (A2). 
Then there exist $\kappa_3>0$ and $\kappa_4>0$ such that, for all $g \in H^1(\mathbb{R}^d,\rho_{\infty}),$
\begin{equation}\label{Vil.s lemma}
\int_{\mathbb{R}^d}g^2 \left| \left|\frac{\partial^2 (V+\phi_{\infty})}{\partial x^2}\right|\right|_F^2\rho_{\infty}dx\leq \kappa_3 \left(\int_{\mathbb{R}^d} g^2 \rho_{\infty}dx+\int_{\mathbb{R}^d}|\nabla_x  g|^2 \rho_{\infty}dx\right),
\end{equation}
\begin{equation}\label{Vil.s lemma1}
\int_{\mathbb{R}^d} g^2 |\nabla_x (V+\phi_{\infty})|^2\rho_{\infty}dx\leq \kappa_4 \left(\int_{\mathbb{R}^d} g^2 \rho_{\infty}dx+\int_{\mathbb{R}^d}|\nabla_x  g|^2 \rho_{\infty}dx\right).
\end{equation}
\item[(ii)]There exist $\kappa'_4>0$ such that, for all $g \in H^1(\mathbb{R}^d,M),$
\begin{equation}\label{Vil.s lemma2}
\int_{\mathbb{R}^d}|v|^2g^2 Mdv\leq \kappa'_4 \left(\int_{\mathbb{R}^d} g^2 Mdv+\int_{\mathbb{R}^d}|\nabla_v  g|^2 Mdv\right).
\end{equation}
\end{itemize}
\end{lemma}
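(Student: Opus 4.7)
My approach is to exploit the Gibbs structure of both weights: $\rho_\infty \propto e^{-\frac{\nu}{\sigma}(V+\phi_\infty)}$ and $M\propto e^{-\frac{\nu}{\sigma}|v|^2/2}$ each satisfy a pointwise identity of the form $(\text{quadratic weight})\cdot(\text{density})=\frac{\sigma}{\nu}\bigl((\text{Laplacian of potential})\cdot(\text{density})-\mathrm{div}(\text{gradient of potential}\cdot\text{density})\bigr)$. Multiplying by $g^2$, integrating, and integrating by parts on the divergence term turns the weighted quadratic quantity into a Laplacian term plus a cross-term involving $g\,\nabla g$; a Young inequality then absorbs both back into the left-hand side.

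For $(ii)$, since $\nabla_v M=-\frac{\nu}{\sigma}vM$, one obtains $\mathrm{div}_v(vM)=dM-\frac{\nu}{\sigma}|v|^2M$, that is $|v|^2M=\frac{\sigma}{\nu}(dM-\mathrm{div}_v(vM))$. Multiplying by $g^2$ and integrating by parts yields
\begin{equation*}
\int_{\mathbb{R}^d}|v|^2g^2 M\,dv=\frac{\sigma d}{\nu}\int_{\mathbb{R}^d}g^2 M\,dv+\frac{2\sigma}{\nu}\int_{\mathbb{R}^d}g\,\nabla_v g\cdot v\,M\,dv.
\end{equation*}
A Young inequality $2g\,\nabla_v g\cdot v\le \epsilon|v|^2g^2+\epsilon^{-1}|\nabla_v g|^2$ with $\epsilon$ small enough absorbs the fraction $\frac{\sigma\epsilon}{\nu}\int|v|^2g^2M\,dv$ into the left-hand side, which gives \eqref{Vil.s lemma2}.

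For $(i)$, I would first observe that Lemma \ref{s.t.} applied with $r>d$ (provided by (A1)) gives $\phi_\infty\in W^{2,\infty}(\mathbb{R}^d)$; consequently, setting $W:=V+\phi_\infty$, the triangle inequality transfers (A2) to $W$, yielding a constant $c_1'>0$ with $\|\partial^2_{xx}W\|_F\le c_1'(1+|\nabla_x W|)$ on $\mathbb{R}^d$. The same Gibbs identity gives $|\nabla_x W|^2\rho_\infty=\frac{\sigma}{\nu}(\Delta_x W\,\rho_\infty-\mathrm{div}_x(\nabla_x W\,\rho_\infty))$, so after multiplying by $g^2$ and integrating by parts,
\begin{equation*}
\int_{\mathbb{R}^d}g^2|\nabla_x W|^2\rho_\infty\,dx=\frac{\sigma}{\nu}\int_{\mathbb{R}^d}g^2\Delta_x W\,\rho_\infty\,dx+\frac{2\sigma}{\nu}\int_{\mathbb{R}^d}g\,\nabla_x g\cdot\nabla_x W\,\rho_\infty\,dx.
\end{equation*}
Using $|\Delta_x W|\le\sqrt{d}\|\partial^2_{xx}W\|_F\le\sqrt{d}\,c_1'(1+|\nabla_x W|)$, the elementary Young splitting $|\nabla_x W|\le\frac{1}{2\delta}+\frac{\delta}{2}|\nabla_x W|^2$, and Young on the cross-term, one bounds the right-hand side by $C\int(g^2+|\nabla_x g|^2)\rho_\infty\,dx$ plus a small multiple of $\int g^2|\nabla_x W|^2\rho_\infty\,dx$ that can be absorbed, proving \eqref{Vil.s lemma1}. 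Finally, \eqref{Vil.s lemma} is immediate from \eqref{Vil.s lemma1} via the pointwise bound $\|\partial^2_{xx}W\|_F^2\le 2(c_1')^2(1+|\nabla_x W|^2)$.

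The main obstacle is not any single estimate but bookkeeping: one must pick the Young parameters $\delta,\epsilon$ small enough so that the absorption closes against both the cross-term and the $|\nabla_x W|$ piece extracted from the Laplacian term. The essential conceptual input is that assumption (A2), upgraded to $W=V+\phi_\infty$ via the $W^{2,\infty}$ regularity from Lemma \ref{s.t.}, permits control of $\Delta_x W$ by $|\nabla_x W|$, which is exactly what makes the integration-by-parts identity self-referentially absorbable.
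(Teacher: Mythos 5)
Your argument is correct, and its first half coincides with the paper's: the paper likewise uses Lemma \ref{s.t.} (with $r>d$, so $\phi_\infty\in W^{2,\infty}(\mathbb{R}^d)$) to transfer (A2) to $W=V+\phi_\infty$, obtaining exactly the pointwise bound $\|\partial^2_{xx}W\|_F\le c(1+|\nabla_x W|)$ as in \eqref{hess<grad2'}. Where you diverge is the second half: the paper then simply invokes \cite[Lemma A.24]{article} (with $V$ replaced by $\frac{\nu}{\sigma}[V+\phi_\infty]$, resp.\ by $|v|^2/2$ for part (ii)) to conclude \eqref{Vil.s lemma}, \eqref{Vil.s lemma1} and \eqref{Vil.s lemma2}, whereas you reprove that lemma from scratch via the Gibbs identity $\nabla\rho_\infty=-\frac{\nu}{\sigma}\nabla W\,\rho_\infty$ (resp.\ $\nabla_v M=-\frac{\nu}{\sigma}vM$), integration by parts, and Young absorption. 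Your route is more self-contained and makes the constants explicit, at the cost of length; the paper's is shorter but outsources the mechanism to Villani's memoir. One small point you should make explicit: the absorption step implicitly assumes $\int g^2|\nabla_x W|^2\rho_\infty\,dx<\infty$ (resp.\ $\int|v|^2g^2M\,dv<\infty$), so the identity-plus-absorption should first be carried out for smooth compactly supported $g$ (where all terms are finite and the integrations by parts have no boundary contributions) and then extended to all of $H^1(\mathbb{R}^d,\rho_\infty)$, resp.\ $H^1(\mathbb{R}^d,M)$, by density and Fatou's lemma; this is the standard regularization also underlying the cited lemma, and with it your proof closes completely.
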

\begin{proof} $(i)$
We first prove that there exists $c_2>0$ such that  \begin{equation}\label{hess<grad2'}
 \frac{\nu}{\sigma}\left|\left|\frac{\partial^2 (V(x)+\phi_{\infty}(x))}{\partial x^2}\right|\right|_F\leq c_2\left(1+\frac{\nu}{\sigma}|\nabla_x \left(V(x)+\phi_{\infty}(x)\right)|\right), \, \, \, \, \, \forall x \in \mathbb{R}^d.
\end{equation}
Lemma \ref{s.t.} provides   $ \phi_{\infty}\in W^{2,\infty}(\mathbb{R}^d).$ Then, \eqref{hess<grad2'} follows by \eqref{hess<grad2} and the following estimates:
\begin{align*}
 \left| \left|\frac{\partial^2 (V+\phi_{\infty})}{\partial x^2}\right|\right|_F &\leq \left|\left|\frac{\partial^2 \phi_{\infty}}{\partial x^2}\right|\right|_F+ \left|\left|\frac{\partial^2 V}{\partial x^2}\right|\right|_F\\
 &\leq \left|\left|\frac{\partial^2 \phi_{\infty}}{\partial x^2}\right|\right|_F+ c_1(1+|\nabla_x V|)\\
& \leq \left|\left|\frac{\partial^2 \phi_{\infty}}{\partial x^2}\right|\right|_F+c_1|\nabla_x \phi_{\infty}|+c_1+c_1|\nabla_x (V+\phi_{\infty})|\\
& \leq c_2\left(\frac{\sigma}{\nu}+|\nabla_x (V+\phi_{\infty})|\right),
\end{align*}
where $c_2\colonequals \max\left\{c_1,\frac{\nu}{\sigma}\left|\left|\,||\frac{\partial^2 \phi_{\infty}}{\partial x^2}||_F+c_1|\nabla_x \phi_{\infty}|+c_1\,\right|\right|_{L^{\infty}(\mathbb{R}^d)}\right\}.$\\
 Then, \eqref{hess<grad2'} and \cite[Lemma A.24]{article} (by replacing $V$ with $\frac{\nu}{\sigma}[V+\phi_{\infty}]$) provide \eqref{Vil.s lemma} and \eqref{Vil.s lemma1}.

$(ii)$ The proof follows from  \cite[Lemma A.24]{article} (by replacing $V$ with ${|v|^2}/{2}).$
\end{proof}
\begin{lemma}
Let $V$ satisfy the assumptions (A1) and (A2). Then there is a positive constant $\kappa_5$ such that, for all $g \in H^1_x(\mathbb{R}^d,f_{\infty}),$
\begin{equation}\label{norm H_x^1}
||g||_{ H^{1}_x(\mathbb{R}^{2d},f_{\infty})}\leq \kappa_5 \sqrt{\displaystyle \int_{\mathbb{R}^{2d}} g^2f_{\infty}dxdv+\int_{\mathbb{R}^{2d}}|\nabla_x g|^2f_{\infty}dxdv}.
\end{equation}
\end{lemma}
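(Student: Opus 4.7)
The plan is to start from the identity \eqref{H_x^1}, expand the gradient $\nabla_x(gf_\infty^{1/2})$ by the product rule, and control the resulting weight factor using the structure of $f_\infty=\rho_\infty M$ together with \eqref{Vil.s lemma1}.

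First I would compute, for sufficiently regular $g$, that
\begin{equation*}
\nabla_x\bigl(gf_\infty^{1/2}\bigr)=(\nabla_x g)f_\infty^{1/2}+g\,\nabla_x f_\infty^{1/2}.
\end{equation*}
Since $f_\infty(x,v)=\rho_\infty(x)M(v)$ with $\rho_\infty(x)\propto e^{-\frac{\nu}{\sigma}[V(x)+\phi_\infty(x)]}$, a direct computation gives
\begin{equation*}
\nabla_x f_\infty^{1/2}=-\frac{\nu}{2\sigma}\bigl(\nabla_x V+\nabla_x\phi_\infty\bigr)f_\infty^{1/2}.
\end{equation*}
Substituting and using $(a+b)^2\leq 2a^2+2b^2$, I obtain
\begin{equation*}
|\nabla_x(gf_\infty^{1/2})|^2\leq 2|\nabla_x g|^2 f_\infty+\frac{\nu^2}{2\sigma^2}g^2\,|\nabla_x(V+\phi_\infty)|^2 f_\infty.
\end{equation*}

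Next I would integrate in $(x,v)$ and control the second term. Writing $f_\infty=\rho_\infty M$ and applying Fubini, for every fixed $v$ the inner $x$-integral is
\begin{equation*}
\int_{\mathbb{R}^d}g(x,v)^2|\nabla_x(V+\phi_\infty)|^2\rho_\infty\,dx.
\end{equation*}
By \eqref{Vil.s lemma1} of Lemma \ref{Vill estimates}, which applies since Assumptions (A1) and (A2) hold, this is bounded by $\kappa_4\bigl(\int g^2\rho_\infty\,dx+\int|\nabla_x g|^2\rho_\infty\,dx\bigr)$. Multiplying by $M(v)$ and integrating in $v$ yields
\begin{equation*}
\int_{\mathbb{R}^{2d}}g^2|\nabla_x(V+\phi_\infty)|^2 f_\infty\,dxdv\leq \kappa_4\left(\int_{\mathbb{R}^{2d}}g^2 f_\infty\,dxdv+\int_{\mathbb{R}^{2d}}|\nabla_x g|^2 f_\infty\,dxdv\right).
\end{equation*}

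Combining the two bounds and then adding the $\int g^2 f_\infty$ term (which contributes to the $H^1_x$ norm through \eqref{H_x^1}) would give
\begin{equation*}
\|g\|^2_{H^1_x(\mathbb{R}^{2d},f_\infty)}\leq \left(1+\tfrac{\nu^2\kappa_4}{2\sigma^2}\right)\int g^2 f_\infty\,dxdv+\left(2+\tfrac{\nu^2\kappa_4}{2\sigma^2}\right)\int |\nabla_x g|^2 f_\infty\,dxdv,
\end{equation*}
so that \eqref{norm H_x^1} holds with, e.g., $\kappa_5=\sqrt{2+\nu^2\kappa_4/(2\sigma^2)}$. The only point requiring some care is justifying the product rule for $\nabla_x(gf_\infty^{1/2})$ at the level of distributional derivatives when $g$ is only in $H^1_x(\mathbb{R}^{2d},f_\infty)$; I would handle this by a standard density argument using smooth compactly supported functions, since Lemma \ref{s.t.} guarantees $\phi_\infty\in W^{2,\infty}$ and hence $f_\infty^{1/2}$ is locally smooth. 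The main technical ingredient is the weighted Poincaré-type bound \eqref{Vil.s lemma1}, which is already available, so no further obstacle is expected.
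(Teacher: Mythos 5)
Your proposal is correct and follows essentially the same route as the paper: starting from \eqref{H_x^1}, expanding $\nabla_x(gf_\infty^{1/2})$ via $\nabla_x f_\infty^{1/2}=-\frac{\nu}{2\sigma}\nabla_x(V+\phi_\infty)f_\infty^{1/2}$, applying the elementary inequality $(a+b)^2\le 2a^2+2b^2$, and then invoking \eqref{Vil.s lemma1}, arriving at the same constant $\kappa_5=\sqrt{2+\kappa_4\nu^2/(2\sigma^2)}$. Your extra remark on justifying the product rule by density is a sound (and unobtrusive) addition; the paper leaves it implicit.
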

\begin{proof}
 Using \eqref{H_x^1} we estimate
$$||g||^2_{ H^{1}_x(\mathbb{R}^{2d},f_{\infty})}= \displaystyle \int_{\mathbb{R}^{2d}} g^2f_{\infty}dxdv+\int_{\mathbb{R}^{2d}}\left|\nabla_x gf^{1/2}_{\infty}-\frac{\nu}{2\sigma}\nabla_x (V+\phi_{\infty})gf^{1/2}_{\infty}\right|^2dxdv$$
$$\leq \displaystyle \int_{\mathbb{R}^{2d}} g^2f_{\infty}dxdv+2\int_{\mathbb{R}^{2d}}|\nabla_x g|^2f_{\infty}dxdv+\frac{\nu^2}{2\sigma^2}\int_{\mathbb{R}^{2d}}g^2|\nabla_x (V+\phi_{\infty})|^2f_{\infty}dxdv. $$
Applying \eqref{Vil.s lemma1} to the last term  we obtain
\begin{align*}
\displaystyle ||g||^2_{ H^{1}_x(\mathbb{R}^{2d},f_{\infty})}&\leq \left(1+\frac{\kappa_4\nu^2}{2\sigma^2}\right) \int_{\mathbb{R}^{2d}} g^2f_{\infty}dxdv+\left(2+\frac{\kappa_4\nu^2}{2\sigma^2}\right) \int_{\mathbb{R}^{2d}}|\nabla_x g|^2f_{\infty}dxdv
 \\
 &\leq \left(2+\frac{\kappa_4\nu^2}{2\sigma^2}\right)\left( \int_{\mathbb{R}^{2d}} g^2f_{\infty}dxdv+\int_{\mathbb{R}^{2d}}|\nabla_x g|^2f_{\infty}dxdv\right).
 \end{align*}
Thus, \eqref{norm H_x^1} holds with $\kappa_5\colonequals\sqrt{2+\frac{\kappa_4\nu^2}{2\sigma^2}}.$
\end{proof}


\subsection{The Poisson equation}
In this section, we present  some estimates for   the Poisson equation
\begin{equation}\label{Pois.eq.}
-\Delta_x \psi=\int_{\mathbb{R}^d}hf_{\infty}dv.
\end{equation}
We define
$L^p(\mathbb{R}^{2d}, f_{\infty})\colonequals \left\{ g :\mathbb{R}^{2d}\to \mathbb{R}: \, \, \int_{\mathbb{R}^{2d}}|g|^pf_{\infty}dxdv<\infty \right\}$
with the norm
\begin{equation*}||g||_{ L^p(\mathbb{R}^{2d},f_{\infty})}\colonequals \left(\int_{\mathbb{R}^{2d}}|g|^pf_{\infty}dxdv\right)^{1/p}.
\end{equation*}

\begin{lemma}\label{lemma0}
Let $V$ be bounded from below and $e^{-\frac{\nu}{\sigma}V}\in L^1(\mathbb{R}^d).$
\begin{enumerate}[~~i)]
\item[(i)] Let $p\in [1,2]$ and $h\in L^p(\mathbb{R}^{2d},f_{\infty}).$ Then
\begin{equation}\label{est.int h}
\left|\left|\int_{\mathbb{R}^d}hf_{\infty}dv\right|\right|_{L^p(\mathbb{R}^d)}\leq
  \left|\left|\rho_{\infty}\right|\right|^{1-\frac{1}{p}}_{L^{\infty}(\mathbb{R}^d)} ||h||_{L^p(\mathbb{R}^{2d},f_{\infty})}.
\end{equation}
\item[(ii)] Let $h \in L^2(\mathbb{R}^{2d},f_{\infty}).$ Then, we have  $h \in L^p(\mathbb{R}^{2d},f_{\infty})$ for all $p\in [1,2]$ and
\begin{equation}\label{p<2}
||h||_{L^p(\mathbb{R}^{2d}, f_{\infty})}\leq ||h||_{L^2(\mathbb{R}^{2d}, f_{\infty})}.
\end{equation}
\item[(iii)] Let $\alpha\in [0,1]$ 
and $h\in  H^{\alpha}_x(\mathbb{R}^{2d},f_{\infty}).$
If $V$ satisfies the assumption (A2), then there is a constant   $\mathcal{A}>0$ (independent of $h$) such that
\begin{equation}\label{est.int der h}
\left|\left|\int_{\mathbb{R}^d}hf_{\infty}dv\right|\right|_{\mathscr{L}^{2}_{\alpha}(\mathbb{R}^d)}\leq \mathcal{A} ||h||_{ H^{\alpha}_x(\mathbb{R}^{2d},f_{\infty})}.
\end{equation}
\end{enumerate}
\end{lemma}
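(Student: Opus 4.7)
The plan uses the factorization $f_{\infty}(x,v)=\rho_{\infty}(x)M(v)$ together with $\rho_{\infty}\in L^{\infty}(\mathbb{R}^d)$ (a consequence of $\phi_{\infty}\in L^{\infty}$ from Lemma~\ref{s.t.} and the lower bound on $V$); parts~(i) and~(ii) are then direct consequences of Jensen/H\"older, and part~(iii) is the main work. For~(i), write $\int hf_{\infty}\,dv=\rho_{\infty}(x)\int hM\,dv$, apply Jensen's inequality against the probability measure $M\,dv$ to obtain $|\int hM\,dv|^p\leq\int|h|^pM\,dv$, then integrate in $x$ after splitting $\rho_{\infty}^p=\rho_{\infty}^{p-1}\cdot\rho_{\infty}$ and bounding $\rho_{\infty}^{p-1}\leq\|\rho_{\infty}\|_{L^{\infty}}^{p-1}$, which yields~\eqref{est.int h}. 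Part~(ii) is H\"older's inequality with exponents $2/p$ and $2/(2-p)$ applied to $|h|^pf_{\infty}=|h|^pf_{\infty}^{p/2}\cdot f_{\infty}^{(2-p)/2}$, using $\int f_{\infty}\,dxdv=1$.

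For~(iii), I would substitute $u\colonequals hf_{\infty}^{1/2}$ so that $\|h\|_{H^{\alpha}_x(\mathbb{R}^{2d},f_{\infty})}=\|(1-\Delta_x)^{\alpha/2}u\|_{L^2(\mathbb{R}^{2d})}$ and the desired bound becomes an estimate of $\|(1-\Delta_x)^{\alpha/2}Su\|_{L^2(\mathbb{R}^d)}$ by $\|(1-\Delta_x)^{\alpha/2}u\|_{L^2(\mathbb{R}^{2d})}$, where $Su(x)\colonequals\int u(x,v)f_{\infty}^{1/2}(x,v)\,dv$. The plan is to verify this at the endpoints $\alpha=0$ and $\alpha=1$ and then invoke the complex interpolation identity for Bessel potential spaces recalled in the introduction, extended to the vector-valued scale $X_{\alpha}\colonequals\{u:(1-\Delta_x)^{\alpha/2}u\in L^2(\mathbb{R}^{2d})\}$ by standard arguments. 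For $\alpha=0$, Cauchy--Schwarz in $v$ gives $|Su(x)|^2\leq\rho_{\infty}(x)\int u^2\,dv$, and integration in $x$ bounds $\|Su\|_{L^2(\mathbb{R}^d)}$ by $\|\rho_{\infty}\|_{L^{\infty}}^{1/2}\|u\|_{L^2(\mathbb{R}^{2d})}$.

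For $\alpha=1$, differentiating under the integral and using $\nabla_xf_{\infty}^{1/2}=-\tfrac{\nu}{2\sigma}\nabla_x(V+\phi_{\infty})f_{\infty}^{1/2}$ produces
\begin{equation*}
\nabla_x Su=\int(\nabla_x u)\,f_{\infty}^{1/2}\,dv-\tfrac{\nu}{2\sigma}\,\nabla_x(V+\phi_{\infty})\,Su.
\end{equation*}
The first integral is controlled by Cauchy--Schwarz exactly as in the $\alpha=0$ step. For the second, $|Su|^2\leq\rho_{\infty}\int u^2\,dv$ reduces the problem to estimating $\int|\nabla_x(V+\phi_{\infty})|^2\rho_{\infty}\bigl(\int u^2\,dv\bigr)dx$; I would handle this by applying~\eqref{Vil.s lemma1} of Lemma~\ref{Vill estimates} to $g(x)\colonequals\bigl(\int u^2(x,v)\,dv\bigr)^{1/2}$ (with a short $\varepsilon$-regularization to avoid differentiability issues on $\{g=0\}$), using the pointwise bound $|\nabla_x g|^2\leq\int|\nabla_x u|^2\,dv$ and $\rho_{\infty}\in L^{\infty}$ to translate the $\rho_{\infty}$-weighted norms of $g$ and $\nabla_x g$ into unweighted $L^2$-norms of $u$ and $\nabla_x u$. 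The main obstacle is precisely this $\alpha=1$ step: since $f_{\infty}^{1/2}$ depends on $x$, the operator $S$ does not commute with $\nabla_x$, and the Leibniz remainder involves the generically unbounded factor $\nabla_x(V+\phi_{\infty})$---which is exactly what assumption~(A2) and Lemma~\ref{Vill estimates} are designed to tame. Complex interpolation between the two endpoints then yields the claim for all $\alpha\in(0,1)$.
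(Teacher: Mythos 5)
Your proposal is correct and follows essentially the same route as the paper: the same Leibniz decomposition of $\nabla_x\int hf_{\infty}\,dv$ using $\nabla_x f_{\infty}^{1/2}=-\tfrac{\nu}{2\sigma}\nabla_x(V+\phi_{\infty})f_{\infty}^{1/2}$, Cauchy--Schwarz in $v$, an application of \eqref{Vil.s lemma1} to absorb the unbounded factor $|\nabla_x(V+\phi_{\infty})|$, and interpolation between the endpoints $\alpha=0$ and $\alpha=1$. The only deviations are cosmetic: Jensen in place of H\"older in (i), applying \eqref{Vil.s lemma1} to $g=(\int u^2\,dv)^{1/2}$ rather than slice-wise in $v$ as the paper does (which also lets you avoid the $\varepsilon$-regularization), and making explicit the vector-valued interpolation identity that the paper invokes implicitly.
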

\begin{proof}  The assumptions on $V$ provide that $f_{\infty}$ is well-defined and bounded.

$(i)$ If $p=1,$ \eqref{est.int h} follows by
\begin{equation*}
\int_{\mathbb{R}^d}\left|\int_{\mathbb{R}^d}hf_{\infty}dv\right|dx\leq \int_{\mathbb{R}^{2d}}|h|f_{\infty}dvdx.
\end{equation*}
If $p\in(1,2],$ the H\"older inequality implies
\begin{align*}
\int_{\mathbb{R}^d}\left|\int_{\mathbb{R}^d}hf_{\infty}dv\right|^pdx&=\int_{\mathbb{R}^d}\left|\int_{\mathbb{R}^d}(hf^{1/p}_{\infty})f^{1-1/p}_{\infty}dv\right|^pdx\\ &\leq
\int_{\mathbb{R}^d}
\left[\left(\int_{\mathbb{R}^d}|h|^pf_{\infty}dv\right)^{1/p}
\left(\int_{\mathbb{R}^d}f_{\infty}dv\right)^{1-1/p}\right]^pdx
\\
&\leq
\left|\left|\rho_{\infty}\right|\right|^{{p-1}}_{L^{\infty}(\mathbb{R}^d)} ||h||^p_{L^p(\mathbb{R}^{2d},f_{\infty})}.
\end{align*}

$(ii)$  The H\"older inequality and $ \int_{\mathbb{R}^{2d}}f_{\infty}dxdv=1$  show
$$||h||_{L^2(\mathbb{R}^{2d}, f_{\infty})}^{p}=\left( \int_{\mathbb{R}^{2d}}(|h|^{p}f^{\frac{p}{2}}_{\infty})^{\frac{2}{p}}dxdv\right)^{\frac{p}{2}}\left( \int_{\mathbb{R}^{2d}}f_{\infty}dxdv\right)^{1-\frac{p}{2}}\geq \int_{\mathbb{R}^{2d}}|h|^{p}f_{\infty}dxdv. $$

$(iii)$
  If $\alpha=0,$ then the Plancherel theorem and \eqref{est.int h} with $p=2$ yield \eqref{est.int der h} with $\mathcal{A}\colonequals \left|\left|\rho_{\infty}\right|\right|^{\frac{1}{2}}_{L^{\infty}(\mathbb{R}^d)} . $  If  $\alpha=1,$ by the Plancherel theorem
  \begin{equation}\label{Plan}
  \left|\left|\int_{\mathbb{R}^d}hf_{\infty}dv\right|\right|^2_{\mathscr{L}^{2}_{1}(\mathbb{R}^d)}=\int_{\mathbb{R}^d}\left|\int_{\mathbb{R}^d}hf_{\infty}dv\right|^2 dx+\int_{\mathbb{R}^d}\left|\nabla_x \int_{\mathbb{R}^d}hf_{\infty}dv\right|^2 dx.
  \end{equation}
  We estimate the second term on the right using $\nabla_x f_{\infty}^{1/2}=-\frac{\nu}{2\sigma}\nabla_x(V+\phi_{\infty})f_{\infty}^{1/2}$ 
\begin{align}\label{h_x}
\int_{\mathbb{R}^d}\left|\nabla_x\int_{\mathbb{R}^d}hf_{\infty}dv\right|^2 dx &=\int_{\mathbb{R}^d}\left|\int_{\mathbb{R}^d}\nabla_x(hf_{\infty}^{1/2})f_{\infty}^{1/2}dv-\frac{\nu}{2\sigma}\int_{\mathbb{R}^d}h \nabla_x(V+\phi_{\infty})f_{\infty}dv\right|^2 dx \nonumber
\\
& \leq 2\int_{\mathbb{R}^d}\left|\int_{\mathbb{R}^d}\nabla_x (hf_{\infty}^{1/2})f_{\infty}^{1/2}dv\right|^2dx+\frac{\nu^2}{2\sigma^2}\int_{\mathbb{R}^d}\left|\int_{\mathbb{R}^d}h\nabla_x(V+\phi_{\infty})f_{\infty}dv\right|^2 dx.
\end{align}
We estimate the first integral in \eqref{h_x}
\begin{align*}\int_{\mathbb{R}^d}\left|\int_{\mathbb{R}^d}\nabla_x(hf_{\infty}^{1/2})f_{\infty}^{1/2}dv\right|^2dx&\leq \int_{\mathbb{R}^d}\left(\int_{\mathbb{R}^d}|\nabla_x(hf_{\infty}^{1/2})|^2dv\right)\left(\int_{\mathbb{R}^d}f_{\infty}dv\right)dx\\
&\leq ||\rho_{\infty}||_{L^{\infty}}\int_{\mathbb{R}^{2d}}|\nabla_x(hf_{\infty}^{1/2})|^2dxdv.
\end{align*}
We estimate the second integral in \eqref{h_x}
\begin{align*}
\int_{\mathbb{R}^d}\left|\int_{\mathbb{R}^d}h\nabla_x(V+\phi_{\infty})f_{\infty}dv\right|^2 dx 
&\leq \int_{\mathbb{R}^d}\left(\int_{\mathbb{R}^d}(hf_{\infty}^{1/2})^2|\nabla_x(V+\phi_{\infty})|^2dv\right)\left(\int_{\mathbb{R}^d}f_{\infty}dv\right) dx\\
&=\int_{\mathbb{R}^{2d}}(hf_{\infty}^{1/2})^2|\nabla_x(V+\phi_{\infty})|^2\rho_{\infty} dxdv.
\end{align*}
These estimates yield
\begin{align*}
\int_{\mathbb{R}^d}\left|\nabla_x\int_{\mathbb{R}^d}hf_{\infty}dv\right|^2 dx &\leq 2||\rho_{\infty}||_{L^{\infty}}\int_{\mathbb{R}^{2d}}|\nabla_x(hf_{\infty}^{1/2})|^2dxdv\\
&\quad +\frac{\nu^2}{2\sigma^2}\int_{\mathbb{R}^{2d}}(hf_{\infty}^{1/2})^2|\nabla_x(V+\phi_{\infty})|^2\rho_{\infty} dxdv.
\end{align*}
Applying \eqref{Vil.s lemma1} to the last term  we obtain
\begin{multline}\label{grad int}
\int_{\mathbb{R}^d}\left|\nabla_x\int_{\mathbb{R}^d}hf_{\infty}dv\right|^2 dx\leq 2||\rho_{\infty}||_{L^{\infty}}\int_{\mathbb{R}^{2d}}|\nabla_x(hf_{\infty}^{1/2})|^2dxdv\\+\frac{\kappa_4\nu^2}{2\sigma^2}\int_{\mathbb{R}^{2d}}h^2f_{\infty}\rho_{\infty}dxdv+\frac{\kappa_4\nu^2}{2\sigma^2}\int_{\mathbb{R}^{2d}}|\nabla_x(hf_{\infty}^{1/2})|^2\rho_{\infty}dxdv\\
\leq \frac{\kappa_4||\rho_{\infty}||_{L^{\infty}}\nu^2}{2\sigma^2}\int_{\mathbb{R}^{2d}}h^2f_{\infty}dxdv+\frac{(4\sigma^2+\kappa_4\nu^2)||\rho_{\infty}||_{L^{\infty}} }{2\sigma^2}\int_{\mathbb{R}^{2d}}|\nabla_x(hf_{\infty}^{1/2})|^2dxdv.
\end{multline}
 \eqref{Plan}, \eqref{est.int h} with $p=2$ and \eqref{grad int} provide
\begin{align*}
  \left|\left|\int_{\mathbb{R}^d}hf_{\infty}dv\right|\right|^2_{\mathscr{L}^{2}_{1}(\mathbb{R}^d)}
  &\leq \left(||\rho_{\infty}||_{L^{\infty}}+\frac{\kappa_4||\rho_{\infty}||_{L^{\infty}}\nu^2}{2\sigma^2} \right)\int_{\mathbb{R}^{2d}}h^2f_{\infty}dxdv
  \\&\, \, \, \, \, \, \, +\frac{(4\sigma^2+\kappa_4\nu^2)||\rho_{\infty}||_{L^{\infty}} }{2\sigma^2} \int_{\mathbb{R}^{2d}}|\nabla_x(hf_{\infty}^{1/2})|^2dxdv
  \\ &\leq \frac{(4\sigma^2+\kappa_4\nu^2)||\rho_{\infty}||_{L^{\infty}} }{2\sigma^2} ||h||^2_{ H^{1}_x(\mathbb{R}^{2d},f_{\infty})}.
  \end{align*}
This estimate  proves \eqref{est.int der h} 
when $\alpha=1.$ By interpolation it holds for all $\alpha\in (0,1).$
\end{proof}
\begin{lemma}\label{lem.1}
Let $d\geq 3,$ $V$ be bounded from below  and $e^{-\frac{\nu}{\sigma}V}\in L^1(\mathbb{R}^d).$
Let $h \in L^2(\mathbb{R}^{2d},f_{\infty})$ and $\psi$ satisfy  \eqref{Pois.eq.}. Then
\begin{enumerate}
\item[(i)]  
There is a positive constant $\theta_1$  such that, for all $p\in (1, 2] ,$ \begin{equation}\label{gr.psi in L^s}
||\nabla_x \psi||_{ L^{\frac{pd}{d-p}}(\mathbb{R}^d)}\leq \theta_1||h||_{ L^{p}(\mathbb{R}^{2d},f_{\infty})}\leq \theta_1||h||_{ L^2(\mathbb{R}^{2d},f_{\infty})}.
\end{equation}
\item[(ii)] If  $h \in  H^{\alpha}_x(\mathbb{R}^{2d},f_{\infty})$ for some $\alpha \in [0,1]$ and $V$ satisfies the assumption (A2), then there is a positive constant $\theta_2$ such that
\begin{equation}\label{gr.psi in Lalpha}
||\nabla_x \psi||_{\mathscr{L}^{\frac{2d}{d-2}}_{\alpha}(\mathbb{R}^d)}\leq \theta_2||h||_{ H^{\alpha}_x(\mathbb{R}^{2d},f_{\infty})}.
\end{equation}
Moreover, if $d=3$ and $\alpha \in (\frac{1}{2},1],$ then there is a positive constant $\theta_3$ such that
\begin{equation}\label{gr.psi in L^infty}
||\nabla_x \psi||_{L^{\infty}(\mathbb{R}^3)}
\leq \theta_3||h||_{ H^{\alpha}_x(\mathbb{R}^{6},f_{\infty})}.
\end{equation}
  \end{enumerate}
\end{lemma}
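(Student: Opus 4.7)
My plan is to treat the three estimates by exploiting the Riesz-potential representation
\[
\nabla_x \psi \;=\; \frac{1}{|\mathbb{S}^{d-1}|}\,\frac{x}{|x|^{d}}\ast u, \qquad u(x)\colonequals \int_{\mathbb{R}^d}h(x,v)f_\infty(x,v)\,dv,
\]
and combining the Hardy--Littlewood--Sobolev inequality with the moment bounds on $u$ already established in Lemma \ref{lemma0}. The key observation is that $\nabla_x\psi$ is (up to a constant) the Riesz potential $I_1 u$ of order $1$, so fractional integration theorems apply.

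For part (i), I would apply HLS with $a=1$ and exponents $p\in(1,2]$, $q=\tfrac{pd}{d-p}$, which gives $\|\nabla_x\psi\|_{L^{pd/(d-p)}(\mathbb{R}^d)}\le C_{HLS}\|u\|_{L^p(\mathbb{R}^d)}$. Estimate \eqref{est.int h} from Lemma \ref{lemma0}(i) then bounds $\|u\|_{L^p(\mathbb{R}^d)}$ by $\|\rho_\infty\|_{L^\infty}^{1-1/p}\|h\|_{L^p(\mathbb{R}^{2d},f_\infty)}$, and \eqref{p<2} from Lemma \ref{lemma0}(ii) upgrades this to the stated bound in $L^2(\mathbb{R}^{2d},f_\infty)$. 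Lemma \ref{s.t.} guarantees $\|\rho_\infty\|_{L^\infty}<\infty$, so the constant $\theta_1$ is explicit.

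For part (ii), the point is that the operator $T\colon u\mapsto \nabla_x(-\Delta_x)^{-1}u$ is a convolution (Fourier multiplier of homogeneous degree $-1$) and therefore commutes with the Bessel operator $(1-\Delta_x)^{\alpha/2}$. Hence
\[
\|\nabla_x\psi\|_{\mathscr{L}^{2d/(d-2)}_\alpha(\mathbb{R}^d)} \;=\; \bigl\|T\bigl((1-\Delta_x)^{\alpha/2}u\bigr)\bigr\|_{L^{2d/(d-2)}(\mathbb{R}^d)}.
\]
Applying HLS with $p=2$ on the right-hand side yields a bound by $C\,\|(1-\Delta_x)^{\alpha/2}u\|_{L^2(\mathbb{R}^d)}=C\|u\|_{\mathscr{L}^2_\alpha(\mathbb{R}^d)}$, and now \eqref{est.int der h} from Lemma \ref{lemma0}(iii) finishes the estimate with $\theta_2\colonequals C_{HLS}\,\mathcal{A}$. (If one prefers not to invoke the commutation directly, the same conclusion follows by interpolating between the two endpoints $\alpha=0$ and $\alpha=1$, which are handled respectively by HLS applied to $u$ and HLS applied to $\partial_{x_j}u$; the latter is controlled by $\|u\|_{\mathscr{L}^2_1}$ via Lemma \ref{lemma0}(iii).)

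For the $L^\infty$ bound when $d=3$ and $\alpha\in(\tfrac12,1]$, I would use the Sobolev embedding $\mathscr{L}^{6}_{\alpha}(\mathbb{R}^{3})\hookrightarrow L^{\infty}(\mathbb{R}^{3})$, which holds precisely because $\alpha\cdot 6 > 3$. Composing with the $\mathscr{L}^6_\alpha$-estimate already proved gives $\|\nabla_x\psi\|_{L^\infty(\mathbb{R}^3)}\le \theta_3\|h\|_{H^\alpha_x(\mathbb{R}^6,f_\infty)}$. The main potential obstacle is the verification that the Riesz-potential operator maps $\mathscr{L}^2_\alpha$ into $\mathscr{L}^{2d/(d-2)}_\alpha$, but since $(1-\Delta_x)^{\alpha/2}$ is a Fourier multiplier commuting with $T$ this reduces immediately to classical HLS applied to $(1-\Delta_x)^{\alpha/2}u$, with no new analytic difficulty.
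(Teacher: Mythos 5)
Your proposal is correct and follows essentially the same route as the paper: the Riesz-potential representation of $\nabla_x\psi$ plus the Hardy--Littlewood--Sobolev inequality for (i), commuting $(1-\Delta_x)^{\alpha/2}$ with the solution operator of the Poisson equation and applying HLS at $p=2$ together with \eqref{est.int der h} for \eqref{gr.psi in Lalpha}, and the Sobolev embedding $\mathscr{L}^{6}_{\alpha}(\mathbb{R}^3)\hookrightarrow L^\infty(\mathbb{R}^3)$ (valid since $6\alpha>3$) for \eqref{gr.psi in L^infty}. No gaps.
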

\begin{proof}
 $(i)$ Applying the Hardy-Littlewood-Sobolev inequality  to the right hand sight of
$$\displaystyle |\nabla_x\psi|=\frac{1}{|\mathbb{S}^{d-1}|}\left|\frac{x}{|x|^d}\ast \int_{\mathbb{R}^d}hf_{\infty}dv\right|\leq \frac{d}{|\mathbb{S}^{d-1}|} \frac{1}{|x|^{d-1}}\ast \left|\int_{\mathbb{R}^d}hf_{\infty}dv\right|,$$
we obtain that there is $C>0$ such that
$$||\nabla_x \psi||_{ L^{\frac{pd}{d-p}}(\mathbb{R}^d)}\leq C\left|\left|\int_{\mathbb{R}^d}hf_{\infty}dv\right|\right|_{ L^{p}(\mathbb{R}^{d})}$$ holds for all $p\in (1, 2].$ Then, \eqref{est.int h} and \eqref{p<2} implies \eqref{gr.psi in L^s}.

$(ii)$ \eqref{Pois.eq.} shows 
 $$-(1-\Delta_x)^{\frac{\alpha}{2}}\Delta_x \psi=-\Delta_x \left((1-\Delta_x)^{\frac{\alpha}{2}}\psi\right)=(1-\Delta_x)^{\frac{\alpha}{2}} \int_{\mathbb{R}^d}hf_{\infty}dv. $$ Applying the Hardy-Littlewood-Sobolev inequality with $p=2$ to the right hand sight of
\begin{align*}\displaystyle |\nabla_x\left((1-\Delta_x)^{\frac{\alpha}{2}}\psi\right)|&=\frac{1}{|\mathbb{S}^{d-1}|}\left|\frac{x}{|x|^d}\ast\left[ (1-\Delta_x)^{\frac{\alpha}{2}}\int_{\mathbb{R}^d}hf_{\infty}dv\right]\right|\\
&\leq \frac{d}{|\mathbb{S}^{d-1}|} \frac{1}{|x|^{d-1}}\ast \left|(1-\Delta_x)^{\frac{\alpha}{2}}\int_{\mathbb{R}^d}hf_{\infty}dv\right|,\end{align*} we get 
\begin{multline*}
\left|\left|\nabla_x \left((1-\Delta_x)^{\frac{\alpha}{2}}\psi\right)\right|\right|_{ L^{\frac{2d}{d-2}}(\mathbb{R}^d)}=||\nabla_x \psi||_{\mathscr{L}^{\frac{2d}{d-2}}_{\alpha}(\mathbb{R}^d)}\\
\leq C\left|\left|(1-\Delta_x)^{\frac{\alpha}{2}}\int_{\mathbb{R}^d}hf_{\infty}dv \right|\right|_{ L^{2}(\mathbb{R}^{d})}=C\left|\left|\int_{\mathbb{R}^d}hf_{\infty}dv \right|\right|_{ \mathscr{L}^{2}_{\alpha}(\mathbb{R}^{d})}.
\end{multline*}
Then, \eqref{gr.psi in Lalpha} follows by \eqref{est.int der h}.

Let $d=3$ and $\alpha \in (\frac{1}{2},1].$ Since $\frac{2d\alpha}{d-2}=6\alpha>3$ for $\alpha\in (1/2, 1],$ the Sobolev embedding \cite[Theorem 1.2.4]{Adams} provides \eqref{gr.psi in L^infty}.

\end{proof}


\section{The linearized Vlasov-Poisson-Fokker-Planck system}
 In this section, we analyze the linearized Vlasov-Poisson-Fokker-Planck system \eqref{linVPFP}. We  first show existence and uniqueness. 
\subsection{Existence and uniqueness}
 We write the linearized system \eqref{linVPFP}  as
\begin{equation*}\begin{cases}
\partial_t h+Kh=0\\
 h_{|t=0}=h_0,
\end{cases}
\end{equation*}
where $Kh \colonequals v\cdot\nabla_x h-\nabla_x (V+\phi_{\infty})\cdot \nabla_v h+v\cdot\nabla_x \psi-\sigma\Delta_v h+\nu v\cdot \nabla_v h.$
 Clearly, $K$ depends on  $\nabla_x \psi.$ But we consider $K$ as an operator acting only on $h,$  since $\nabla_x \psi$  can be expressed by $h$ as
\begin{equation*}\nabla_x\psi=\frac{1}{|\mathbb{S}^{d-1}|}\frac{x}{|x|^d}\ast \int_{\mathbb{R}^d} hf_{\infty} dv.
\end{equation*}
\begin{theorem}\label{th:linEXIS}
 Let
$V\in C^{\infty}(\mathbb{R}^d)$ be bounded from below and
$e^{-\frac{\nu}{\sigma}V}\in L^1(\mathbb{R}^d).$ Then  $K$ generates a $C_0$ semigroup $e^{-t K}$ on $L^2(\mathbb{R}^{2d}, f_{\infty}).$ In particular, for any $h_0\in L^2(\mathbb{R}^{2d}, f_{\infty}),$ the linearized system  \eqref{linVPFP} has a unique mild solution
$$h \in C\left([0,\infty); L^2(\mathbb{R}^{2d}, f_{\infty})\right)$$ and $$|\nabla_x \psi| \in  C\left([0,\infty); {L^{\frac{pd}{d-p}}}(\mathbb{R}^d)\right), \, \, \, \,  \, \, \,\forall\,  p\in (1, 2].$$
\end{theorem}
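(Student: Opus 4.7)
\textbf{Proof plan for Theorem \ref{th:linEXIS}.} The plan is to decompose
$$K=K_0+B, \qquad K_0 h \colonequals v\cdot\nabla_x h-\nabla_x(V+\phi_\infty)\cdot\nabla_v h-\sigma\Delta_v h+\nu v\cdot\nabla_v h, \qquad Bh \colonequals v\cdot\nabla_x\psi[h],$$
where $\psi[h]$ denotes the unique decaying solution of $-\Delta_x\psi=\int_{\R^d}h f_\infty dv$. The operator $K_0$ is the linear kinetic Fokker--Planck operator with confining potential $V+\phi_\infty$ and no self-consistent coupling; $B$ collects all the nonlocal dependence on $h$ through the Poisson equation. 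The strategy is to show that $K_0$ generates a contraction $C_0$-semigroup on $L^2(\R^{2d},f_\infty)$, that $B$ is a bounded linear operator on the same space, and then to invoke the bounded perturbation theorem for $C_0$-semigroups (Pazy, Theorem 3.1.1).

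The first step is that $K_0$ is m-accretive on $L^2(\R^{2d},f_\infty)$. Using $\nabla_x f_\infty=-\tfrac{\nu}{\sigma}\nabla_x(V+\phi_\infty)f_\infty$ and $\nabla_v f_\infty=-\tfrac{\nu}{\sigma}v f_\infty$, an integration by parts shows that the transport part $v\cdot\nabla_x-\nabla_x(V+\phi_\infty)\cdot\nabla_v$ is skew-symmetric in $L^2(f_\infty)$, while $-\sigma\Delta_v+\nu v\cdot\nabla_v$ is symmetric and satisfies $\langle(-\sigma\Delta_v+\nu v\cdot\nabla_v) h,h\rangle_{L^2(f_\infty)}=\sigma\|\nabla_v h\|_{L^2(f_\infty)}^2\geq 0$. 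Hence $\mathrm{Re}\langle K_0 h,h\rangle_{L^2(f_\infty)}\geq 0$ on $C_c^\infty(\R^{2d})$. For maximality I would solve the stationary problem $(\lambda I+K_0)h=g\in L^2(f_\infty)$ for $\lambda>0$ by elliptic regularization: add an artificial term $-\varepsilon\Delta_x$, apply Lax--Milgram in the uniformly coercive space $H^1_x\cap H^1_v(f_\infty)$, and then pass to the limit $\varepsilon\downarrow 0$ using an energy estimate that depends only on $\lambda$ and $\|g\|_{L^2(f_\infty)}$. Lumer--Phillips then gives the contraction $C_0$-semigroup $e^{-tK_0}$. I expect this step to be the main technical obstacle because the coefficients $v$ and $\nabla_x(V+\phi_\infty)$ are unbounded and the domain of $K_0$ must be identified carefully.

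The second step is the boundedness of $B$ on $L^2(\R^{2d},f_\infty)$. Separating the $v$ and $x$ integrals, $\|Bh\|_{L^2(f_\infty)}^2=\bigl(\int_{\R^d}|v|^2 M(v)dv\bigr)\int_{\R^d}|\nabla_x\psi|^2\rho_\infty dx$. Applying H\"older's inequality with conjugate exponents $d/(d-2)$ and $d/2$ (this is where $d\geq 3$ enters) gives
$$\int_{\R^d}|\nabla_x\psi|^2\rho_\infty dx\leq \|\rho_\infty\|_{L^{d/2}(\R^d)}\|\nabla_x\psi\|_{L^{2d/(d-2)}(\R^d)}^2.$$
Since $\rho_\infty$ is bounded (Lemma \ref{s.t.}) and integrates to one, it lies in $L^{d/2}(\R^d)$; and Lemma \ref{lem.1}(i) with $p=2$ yields $\|\nabla_x\psi\|_{L^{2d/(d-2)}(\R^d)}\leq \theta_1\|h\|_{L^2(f_\infty)}$. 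Combining these gives $\|Bh\|_{L^2(f_\infty)}\leq C\|h\|_{L^2(f_\infty)}$ with an explicit constant, so $B$ is bounded.

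With the two steps in place, the bounded perturbation theorem produces a $C_0$-semigroup $e^{-tK}$ on $L^2(\R^{2d},f_\infty)$. For $h_0\in L^2(\R^{2d},f_\infty)$, the function $h(t)\colonequals e^{-tK}h_0$ belongs to $C([0,\infty);L^2(\R^{2d},f_\infty))$ and is by definition the unique mild solution of \eqref{linVPFP}; uniqueness is standard semigroup theory. Finally, for $p\in(1,2]$, Lemma \ref{lem.1}(i) gives $\|\nabla_x\psi(t)-\nabla_x\psi(s)\|_{L^{pd/(d-p)}(\R^d)}\leq \theta_1\|h(t)-h(s)\|_{L^2(f_\infty)}$ by linearity of the Poisson solution map in $h$, so the continuity of $t\mapsto h(t)$ into $L^2(f_\infty)$ transfers to continuity of $t\mapsto \nabla_x\psi(t)$ into $L^{pd/(d-p)}(\R^d)$, completing the proof.
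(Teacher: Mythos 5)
Your overall architecture is exactly the paper's: it also splits $K$ into the kinetic Fokker--Planck operator $Lh= v\cdot\nabla_x h-\nabla_x(V+\phi_\infty)\cdot\nabla_v h-\sigma\Delta_v h+\nu v\cdot\nabla_v h$ plus the coupling term $h\mapsto v\cdot\nabla_x\psi$, shows the latter is a bounded operator on $L^2(\mathbb{R}^{2d},f_\infty)$ via Lemma \ref{lem.1}\,(i), concludes with the bounded perturbation theorem from Pazy, and obtains the continuity of $t\mapsto\nabla_x\psi(t)$ in $L^{\frac{pd}{d-p}}(\mathbb{R}^d)$ exactly as you do. Your estimate of the coupling term differs only cosmetically: you use H\"older with $\rho_\infty\in L^{d/2}$ and \eqref{gr.psi in L^s} with $p=2$, while the paper uses $\|\rho_\infty\|_{L^\infty}$ together with \eqref{gr.psi in L^s} at $p=\tfrac{2d}{d+2}$ (so that $\nabla_x\psi\in L^2$); both are correct.

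The one substantive divergence is the generation of the semigroup by $K_0$ (the paper's $L$), and there your proposal is incomplete. The paper first upgrades the steady state: since $V\in C^\infty$ and $\phi_\infty\in W^{2,q}$ by Lemma \ref{s.t.}, an elliptic bootstrap gives $\phi_\infty\in C^\infty(\mathbb{R}^d)$, and it then cites Helffer--Nier for the fact that the kinetic Fokker--Planck operator with smooth potential generates a contraction semigroup on $L^2(\mathbb{R}^{2d},f_\infty)$. You instead sketch a Lumer--Phillips argument; the dissipativity computation you indicate is fine, but the maximality step (add $-\varepsilon\Delta_x$, Lax--Milgram in $H^1(\mathbb{R}^{2d},f_\infty)$, let $\varepsilon\downarrow0$) does not work as written: with the unbounded coefficients $v$ and $\nabla_x(V+\phi_\infty)$ the transport part of the bilinear form is not continuous on $H^1(f_\infty)\times H^1(f_\infty)$, and testing $-\varepsilon\Delta_x h$ against $hf_\infty$ produces the cross term $\varepsilon\int h\,\nabla_x h\cdot\nabla_x f_\infty\,dxdv$, which involves $\nabla_x(V+\phi_\infty)$ and cannot be absorbed under the hypotheses of this theorem (assumption (A2), hence Lemma \ref{Vill estimates}, is not available here), so even coercivity of the regularized form is unclear. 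Making this route rigorous requires coefficient truncations and a core or hypoellipticity argument of the Helffer--Nier/Villani type --- precisely the material the paper imports by citation. Either carry that out in detail or, as the paper does, record $\phi_\infty\in C^\infty$ (some regularity of $\phi_\infty$ is needed in any case, and your write-up does not address it) and quote the known generation result for the kinetic Fokker--Planck semigroup.
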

 \begin{proof}
 By Lemma \ref{s.t.}, we have  $\phi_{\infty}\in W^{2,q}(\mathbb{R}^d)$ for all $q \in (\frac{d}{d-2}, \infty),$ and so $\phi_{\infty}\in C^{1}(\mathbb{R}^d).$ Since we have $V\in C^{\infty}(\mathbb{R}^d),$ we can show $\phi_{\infty}\in C^{\infty}(\mathbb{R}^d)$ by a bootstrap argument, see \cite[Theorem 5.20]{giaquinta2013introduction}.
 We consider the following  equation without the Poisson equation
 \begin{equation}\label{kinFP}
\begin{cases}
\partial_t h+v\cdot\nabla_x h-\nabla_x (V+\phi_{\infty})\cdot \nabla_v h-\sigma \Delta_v h+\nu v\cdot \nabla_v h=0\\
h_{|t=0}=h_0.
\end{cases}
\end{equation}
Let $Lh \colonequals v\cdot\nabla_x h-\nabla_x (V+\phi_{\infty})\cdot \nabla_v h-\sigma\Delta_v h+\nu v\cdot \nabla_v h.$ Then \eqref{kinFP} can be written as \begin{equation*}
\begin{cases}
\partial_t h+Lh=0\\
h_{|t=0}=h_0.
\end{cases}
\end{equation*}
Since we have $V+\phi_{\infty}\in C^{\infty}(\mathbb{R}^d),$  \cite[Section 5.2]{HelNi} shows that $L$ generates a $C^{\infty}$ regularizing contraction semigroup in
$L^2(\mathbb{R}^{2d}, f_{\infty} ).$
 $K$ differs from $L$ in the term $v\cdot\nabla_x \psi$ coming from the Poisson equation.  \eqref{gr.psi in L^s} with $p=\frac{2d}{d+2}$ shows
\begin{align*}
\int_{\mathbb{R}^{2d}}|v\cdot \nabla_x \psi|^2f_{\infty}dxdv&\leq \left(\int_{\mathbb{R}^{d}}|v|^2M(v)dv\right)||\rho_{\infty}||_{L^{\infty}}\int_{\mathbb{R}^{d}}| \nabla_x \psi|^2dx\\
&\leq \theta_1^2||\rho_{\infty}||_{L^{\infty}} \left(\int_{\mathbb{R}^{d}}|v|^2M(v)dv\right) \int_{\mathbb{R}^{2d}}h^2f_{\infty}dxdv.
\end{align*}
Therefore, we can consider $h
\to v\cdot \nabla_x \psi$ as a bounded operator from $L^2(\mathbb{R}^{2d},f_{\infty})$ to $L^2(\mathbb{R}^{2d},f_{\infty}).$
 This implies that $K$ is a bounded perturbation of $L$ in $L^2(\mathbb{R}^{2d}, f_{\infty}).$  Then \cite[Chapter 3]{Pazy} provides that $K$ generates a $C_0$ semigroup $e^{-t K} $ on $L^2(\mathbb{R}^{2d}, f_{\infty}).$ Also,  $e^{-t K}h_0 \in C\left([0,\infty); L^2(\mathbb{R}^{2d}, f_{\infty})\right)$ is the unique mild solution to \eqref{linVPFP} by \cite[Chapter 4]{Pazy}.
Then   \eqref{gr.psi in L^s} implies that the absolute value of
$$\nabla_x\psi(t)=\frac{1}{|\mathbb{S}^{d-1}|}\frac{x}{|x|^d}\ast \int_{\mathbb{R}^d} e^{-t K}h_0\,f_{\infty} dv$$
is in $L^{ \frac{pd}{d-p} }(\mathbb{R}^d)$ for all $p\in (1, 2].$ Moreover,    $ ||\nabla_x \psi(t)||_{L^{ \frac{pd}{d-p} }(\mathbb{R}^d)}$ is continuous function of $t$ as $e^{-t K}h_0 \in C\left([0,\infty); L^2(\mathbb{R}^{2d}, f_{\infty})\right).$   

 \end{proof}
\subsection{Exponential stability in $H^1(\mathbb{R}^{2d}, f_{\infty})$}
 In this subsection, we shall construct a Lypunov functional for  the linearized system \eqref{linVPFP}. This functional will help us to show that the solutions of \eqref{linVPFP} are exponentially stable in $H^1(\mathbb{R}^{2d}, f_{\infty}).$

We introduce  a  norm $$||h||^2\colonequals\int_{\mathbb{R}^{2d}}h^2f_{\infty}dxdv+\int_{\mathbb{R}^{d}}|\nabla_x \psi|^2dx.$$

\begin{lemma}\label{lemma ||h||^2}
Let $h$ be the solution of \eqref{linVPFP}. Then, for all $t>0,$
\begin{equation*}
\frac{d}{dt}||h(t)||^2=-2\sigma\int_{\mathbb{R}^{2d}}|\nabla_v h|^2f_{\infty}dxdv.
\end{equation*}
In particular, we have $||h(t)||\leq ||h_0||$ for all $t\geq 0.$
\end{lemma}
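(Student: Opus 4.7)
The plan is a direct energy computation, breaking $||h(t)||^2$ into its two pieces and tracking how the Poisson coupling term in the $L^2(\mathbb{R}^{2d},f_\infty)$ estimate is cancelled exactly by $\frac{d}{dt}\int|\nabla_x\psi|^2\,dx$. Throughout I would assume enough smoothness and decay on $h$ to justify the integrations by parts, then invoke a density/approximation argument (using that mild solutions are limits of smooth ones, or that $e^{-tK}$ is contractive enough on a core); this is standard for kinetic Fokker--Planck models.

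First I would compute $\frac{d}{dt}\int h^2 f_\infty\,dx\,dv = 2\int h\,\partial_t h\, f_\infty\,dx\,dv$ and substitute from \eqref{linVPFP}. Using the identities $\nabla_x f_\infty = -\tfrac{\nu}{\sigma}\nabla_x(V+\phi_\infty)f_\infty$ and $\nabla_v f_\infty = -\tfrac{\nu}{\sigma}v f_\infty$, integration by parts in $x$ of the transport term $-2\int h\,v\cdot\nabla_x h\,f_\infty$ produces $-\tfrac{\nu}{\sigma}\int h^2\,v\cdot\nabla_x(V+\phi_\infty)f_\infty$, which is exactly cancelled by integration by parts in $v$ of the force term $+2\int h\,\nabla_x(V+\phi_\infty)\cdot\nabla_v h\,f_\infty$. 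The Fokker--Planck piece, after integrating $2\sigma\int h\,\Delta_v h\, f_\infty$ by parts in $v$ and using $\nabla_v f_\infty=-\tfrac{\nu}{\sigma}v f_\infty$, collapses the $\nu$-friction contribution and leaves $-2\sigma\int|\nabla_v h|^2 f_\infty\,dx\,dv$. The remaining Poisson-coupling term contributes
\begin{equation*}
-2\int h\,v\cdot\nabla_x\psi\,f_\infty\,dx\,dv = -2\int \nabla_x\psi\cdot j\,dx,\qquad j(t,x)\colonequals\int_{\mathbb{R}^d}v\,h\,f_\infty\,dv.
\end{equation*}

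Next I would handle $\frac{d}{dt}\int|\nabla_x\psi|^2\,dx = -2\int\psi\,\Delta_x\partial_t\psi\,dx$ via a continuity equation for $\int h f_\infty\,dv$. Integrating the linearized equation in $v$ against $f_\infty$, the terms $\nabla_x(V+\phi_\infty)\cdot\nabla_v h$ and $\sigma\Delta_v h-\nu v\cdot\nabla_v h$ produce boundary-free contributions that cancel in pairs (again by the two gradient identities for $f_\infty$), the term $v\cdot\nabla_x\psi$ vanishes because $\int v\,M(v)\,dv=0$, and the transport term yields $\nabla_x\cdot j$ plus a piece that cancels against the force term. This gives the clean continuity relation $\partial_t\Delta_x\psi = \nabla_x\cdot j$, hence
\begin{equation*}
\frac{d}{dt}\int_{\mathbb{R}^d}|\nabla_x\psi|^2\,dx = -2\int\psi\,\nabla_x\cdot j\,dx = 2\int\nabla_x\psi\cdot j\,dx,
\end{equation*}
which is precisely the negative of the Poisson-coupling term computed above.

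Adding the two pieces, the $\nabla_x\psi\cdot j$ terms cancel and only $-2\sigma\int|\nabla_v h|^2 f_\infty\,dx\,dv$ survives, giving the claimed identity. The non-positivity of the right-hand side immediately yields $||h(t)||\le||h_0||$ for all $t\ge 0$. The main obstacle I anticipate is the rigorous justification of the integrations by parts and the decay of $\psi$ at spatial infinity needed to drop boundary terms in $\int\psi\,\nabla_x\cdot j\,dx$; this is not automatic for mild $L^2(\mathbb{R}^{2d},f_\infty)$ solutions in $\mathbb{R}^d$ (not on a torus), so one would approximate $h_0$ by a smoother, compactly supported sequence, perform the calculation there, and pass to the limit using Theorem \ref{th:linEXIS} and the $L^{pd/(d-p)}$ control of $\nabla_x\psi$ provided by Lemma \ref{lem.1}.
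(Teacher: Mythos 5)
Your proposal is correct and follows essentially the same route as the paper: the same energy computation for $\frac{d}{dt}\int h^2 f_{\infty}\,dxdv$ with the transport/force cancellations and the Fokker--Planck term, combined with the identity $\frac{d}{dt}\int|\nabla_x\psi|^2dx=2\int v\cdot\nabla_x\psi\, h f_{\infty}\,dxdv$, which the paper obtains by substituting $\partial_t h$ directly while you phrase it through the equivalent continuity relation $\partial_t\Delta_x\psi=\nabla_x\cdot j$. The only difference is your explicit remark about justifying the integrations by parts by approximation, which the paper leaves implicit.
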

\begin{proof}
First, we compute
\begin{align*}
\frac{d}{dt} \int_{\mathbb{R}^{2d}}h^2f_{\infty}dxdv&=2\int_{\mathbb{R}^{2d}}h\partial_t h f_{\infty}dxdv\\
&=-2\int_{\mathbb{R}^{2d}}\left(v\cdot \nabla_x h-\nabla_x(V+\phi_{\infty})\cdot \nabla_v h\right)hf_{\infty}dxdv\\&\, \, \, \, \, \, +2\int_{\mathbb{R}^{2d}}(\sigma\Delta_v h-\nu v\cdot \nabla_v h)h f_{\infty}dxdv\\
&\quad-2 \int_{\mathbb{R}^{2d}}v \cdot \nabla_x \psi hf_{\infty}dxdv.
\end{align*}
 Integrating by parts we get
 \begin{multline*}
 -2\int_{\mathbb{R}^{2d}}\left(v\cdot \nabla_x h-\nabla_x(V+\phi_{\infty})\cdot \nabla_v h\right)hf_{\infty}dxdv\\=-\int_{\mathbb{R}^{2d}}\left(v\cdot \nabla_x h^2-\nabla_x(V+\phi_{\infty})\cdot \nabla_v h^2\right)f_{\infty}dxdv=0,
 \end{multline*}
 and
 \begin{equation*}
2\int_{\mathbb{R}^{2d}}(\sigma\Delta_v h-\nu v\cdot \nabla_v h)h f_{\infty}dxdv=-2\sigma\int_{\mathbb{R}^{2d}}|\nabla_v h|^2 f_{\infty}dxdv.
 \end{equation*}
 Hence \begin{equation}\label{dt h^2}
\frac{d}{dt} \int_{\mathbb{R}^{2d}}h^2f_{\infty}dxdv=-2\sigma\int_{\mathbb{R}^{2d}}|\nabla_v h|^2 f_{\infty}dxdv-2 \int_{\mathbb{R}^{2d}}v \cdot \nabla_x \psi hf_{\infty}dxdv.
\end{equation}
 Secondly, we compute
\begin{align}\label{dt psi}
\frac{d}{dt}\int_{\mathbb{R}^{d}}|\nabla_x \psi|^2dx =&2\int_{\mathbb{R}^{d}}\nabla_x (\partial_t \psi) \cdot \nabla_x \psi dx\notag\\=&-2\int_{\mathbb{R}^{d}} \psi \Delta_x (\partial_t \psi)dx=2\int_{\mathbb{R}^{d}} \psi  \left(\int_{\mathbb{R}^{d}} \partial_t h f_{\infty}dv\right)dx\notag\\\notag
=&2\int_{\mathbb{R}^{2d}}\psi\left[\nabla_x (V+\phi_{\infty})\cdot \nabla_v h-v\cdot \nabla_x h+\sigma\Delta_v h-\nu v\cdot \nabla_v h-v\cdot \nabla_x \psi\right]f_{\infty}dxdv\\
=&2\int_{\mathbb{R}^{2d}}v \cdot \nabla_x \psi hf_{\infty}dxdv {-2\int_{\mathbb{R}^{2d}}\psi \nabla_x \psi \cdot v f_{\infty}dxdv}=2\int_{\mathbb{R}^{2d}}v \cdot \nabla_x \psi hf_{\infty}dxdv,
\end{align}
where we integrated by parts and used $\nabla_x f_{\infty}=-\frac{\nu}{\sigma}\nabla_x(V+\phi_{\infty})f_{\infty}$ and $\nabla_vf_{\infty}=-\frac{\nu}{\sigma}vf_{\infty}.$\\
\eqref{dt h^2} and \eqref{dt psi} provide the claimed equality.
\end{proof}
Let  $P \in \mathbb{R}^{2n \times 2n}$ be a constant, symmetric, positive definite matrix.  We define
\begin{align*} 
&\mathrm{S}_P[h]\colonequals \int_{\mathbb{R}^{2d}}\begin{pmatrix}
\nabla_x (h+\psi)\\ \nabla_v (h+\psi)
\end{pmatrix}^T  P\begin{pmatrix}
\nabla_x (h+\psi)\\
\nabla_v (h+\psi)
\end{pmatrix} f_{\infty}dxdv\\ &\quad\quad\quad =\int_{\mathbb{R}^{2d}}\begin{pmatrix}
\nabla_x (h+\psi)\\ \nabla_v h
\end{pmatrix}^T  P\begin{pmatrix}
\nabla_x (h+\psi)\\\nabla_v h
\end{pmatrix} f_{\infty}dxdv.
\end{align*}
\begin{lemma}\label{lemma main}
Let $h$ be the  solution of \eqref{linVPFP}. Then, for all $t>0,$
\begin{align}\label{derivS}
\frac{d}{dt} \mathrm{S}_P[h(t)]=&-2\sigma\int_{\mathbb{R}^{2d}}\left\{\sum_{i=1}^d \begin{pmatrix}
\nabla_x ( \partial_{v_i} h)\\ \nabla_v(\partial_{v_i} h)
\end{pmatrix}^T  P\begin{pmatrix}
\nabla_x (\partial_{v_i} h)\\\nabla_v(\partial_{v_i} h)
\end{pmatrix} \right\}f_{\infty}dxdv\notag\\&-\int_{\mathbb{R}^{2d}}  \begin{pmatrix}
\nabla_x (h+\psi)\\ \nabla_v h
\end{pmatrix}^T  \left\{QP+PQ^T\right\} \begin{pmatrix}
\nabla_x (h+\psi)\\ \nabla_v h
\end{pmatrix} f_{\infty}dxdv\notag\\&
-2\int_{\mathbb{R}^{2d}} \begin{pmatrix}
\nabla_x (h+\psi)\\ \nabla_v h
\end{pmatrix}^T P \begin{pmatrix} \nabla_x \partial_t  \psi\\0
\end{pmatrix} f_{\infty}dxdv,
\end{align}
where  $Q=Q(x)\colonequals \begin{pmatrix}
0&I\\
-\frac{\partial^2 (V(x)+\phi_{\infty}(x))}{\partial x^2}&\nu I
\end{pmatrix}.$
\end{lemma}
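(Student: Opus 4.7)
The plan is to compute $\frac{d}{dt}\mathrm{S}_P[h(t)]=2\int_{\mathbb{R}^{2d}}\tilde u^T P\,\partial_t\tilde u\, f_{\infty}\, dxdv$, where $\tilde u := \begin{pmatrix}\nabla_x(h+\psi)\\ \nabla_v h\end{pmatrix}$, by first deriving a closed linear PDE for $\tilde u$. Writing the kinetic piece as $L_0 g := v\cdot\nabla_x g-\nabla_x(V+\phi_{\infty})\cdot\nabla_v g-\sigma\Delta_v g+\nu v\cdot\nabla_v g$, the linearized system \eqref{linVPFP} reads $\partial_t h=-L_0 h-v\cdot\nabla_x\psi$. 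I would apply $\partial_{x_i}$ and $\partial_{v_i}$ to this equation and use the commutator identities
\[
[\partial_{x_i},L_0]h=-\sum_j\partial_{x_i}\partial_{x_j}(V+\phi_{\infty})\,\partial_{v_j}h,\qquad [\partial_{v_i},L_0]h=\partial_{x_i}h+\nu\,\partial_{v_i}h,
\]
which follow by direct calculation (legitimate by the regularity of $V+\phi_{\infty}$ from Lemma \ref{s.t.}). Adding $\nabla_x\partial_t\psi$ to the first block and exploiting that $\psi=\psi(x)$ implies $L_0(\nabla_x\psi)=v\cdot\nabla_x\nabla_x\psi$, the term $\nabla_x(v\cdot\nabla_x\psi)$ coming from differentiating the Poisson coupling is exactly absorbed into $L_0\tilde u$. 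This yields the compact identity
\[
\partial_t\tilde u + L_0\tilde u \;=\; -Q^T\tilde u+\begin{pmatrix}\nabla_x\partial_t\psi\\ 0\end{pmatrix},
\]
which is the structural heart of the calculation (note $Q^T=\begin{pmatrix}0&-H\\I&\nu I\end{pmatrix}$ with $H=\tfrac{\partial^2(V+\phi_\infty)}{\partial x^2}$, using $H^T=H$).

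Substituting this into $2\int\tilde u^T P\,\partial_t\tilde u\, f_{\infty}\, dxdv$ and splitting $L_0=T_1+T_2$ with $T_1:=v\cdot\nabla_x-\nabla_x(V+\phi_{\infty})\cdot\nabla_v$ and $T_2:=-\sigma\Delta_v+\nu v\cdot\nabla_v$, I would treat each part separately. The transport operator $T_1$ satisfies $\int T_1 g\, f_{\infty}\, dxdv=0$ for every decaying scalar $g$ (as in Lemma \ref{lemma ||h||^2}, the two drift contributions cancel through $\nabla_x f_{\infty}=-\tfrac{\nu}{\sigma}\nabla_x(V+\phi_{\infty})f_{\infty}$ and $\nabla_v f_{\infty}=-\tfrac{\nu}{\sigma}v\, f_{\infty}$). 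Applying this to the scalar $\tilde u_i\tilde u_j$, together with the Leibniz rule $T_1(\tilde u_i\tilde u_j)=\tilde u_i T_1\tilde u_j+\tilde u_j T_1\tilde u_i$ and the symmetry of $P$, gives $\int\tilde u^T PT_1\tilde u\, f_{\infty}\, dxdv=0$. For $T_2$, integration by parts in $v$ produces the weighted Dirichlet form
\[
\int\tilde u^T P T_2\tilde u\, f_{\infty}\, dxdv=\sigma\sum_k\int(\partial_{v_k}\tilde u)^T P(\partial_{v_k}\tilde u)\, f_{\infty}\, dxdv;
\]
since $\psi$ is independent of $v$, one has $\partial_{v_k}\tilde u=\begin{pmatrix}\nabla_x\partial_{v_k}h\\ \nabla_v\partial_{v_k}h\end{pmatrix}$, reproducing exactly the first dissipation term in \eqref{derivS}.

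Finally, the algebraic term $-2\int\tilde u^T PQ^T\tilde u\, f_{\infty}$ is symmetrized using $P^T=P$: the scalar $\tilde u^T QP\tilde u$ equals its transpose $\tilde u^T PQ^T\tilde u$, so $-2\int\tilde u^T PQ^T\tilde u\, f_{\infty}\, dxdv=-\int\tilde u^T(QP+PQ^T)\tilde u\, f_{\infty}\, dxdv$, which is the second term of \eqref{derivS}, while the source term $\begin{pmatrix}\nabla_x\partial_t\psi\\0\end{pmatrix}$ directly produces the third. The main difficulty is not conceptual but purely bookkeeping: one must verify the two commutator identities with correct signs, exploit systematically the $v$-independence of $\psi$ (so that $\partial_{v_k}$ commutes trivially with the $\psi$-block of $\tilde u$), and justify each integration by parts against $f_{\infty}\, dxdv$ without boundary contributions, which is ensured by the Gaussian decay of $M(v)$ and by $\phi_{\infty}\in W^{2,\infty}(\mathbb{R}^d)$ together with assumption $(A2)$ on $V$.
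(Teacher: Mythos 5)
Your proposal is correct and follows essentially the same route as the paper: the paper likewise derives the closed first-order system $\partial_t u=-L_0u-Q^Tu+(\nabla_x\partial_t\psi,0)^T$ for $u=(\nabla_x(h+\psi),\nabla_v h)^T$ (by differentiating the equation satisfied by $h+\psi$, which amounts to your commutator identities), and then obtains \eqref{derivS} exactly as you do — the transport part cancels after integration by parts using $\nabla_xf_{\infty}=-\tfrac{\nu}{\sigma}\nabla_x(V+\phi_{\infty})f_{\infty}$, $\nabla_vf_{\infty}=-\tfrac{\nu}{\sigma}vf_{\infty}$, the $\sigma\Delta_v-\nu v\cdot\nabla_v$ part yields the weighted Dirichlet form, and the $Q^T$ term is symmetrized into $QP+PQ^T$. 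No gaps worth flagging.
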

\begin{proof} Since $\psi$ does not depends on $v,$ we write \eqref{linVPFP} as
\begin{equation*}
\partial_t (h+\psi)=-v\cdot\nabla_x (h+\psi)+\nabla_x (V+ \phi_{\infty})\cdot \nabla_v (h+\psi)+\sigma\Delta_v (h+\psi)-\nu v\cdot \nabla_v (h+\psi)+\partial_t \psi. \end{equation*}
We denote $u\colonequals \begin{pmatrix}
\nabla_x (h+\psi)\\ \nabla_v (h+\psi)
\end{pmatrix}, $ $\displaystyle u_1\colonequals \nabla_x (h+\psi),$ $\displaystyle u_2\colonequals \nabla_v (h+\psi).$ Then   $u_1$ and $u_2$ satisfy
\begin{multline*}\partial_t u_1= \sigma\Delta_v u_1-\nu \sum_{i=1}^d v_i  \partial_{v_i}u_1+\sum_{i=1}^d \partial_{x_i} (V
+\phi_{\infty}) \partial_{v_i} u_1-\sum_{i=1}^d v_i \partial_{x_i} u_1\\+\frac{\partial^2 (V
+\phi_{\infty})}{\partial x^2}u_2+\nabla_x \partial_t  \psi,
\end{multline*}
\begin{equation*}
 \partial_t u_2= \sigma \Delta_v u_2-\nu \sum_{i=1}^d v_i  \partial_{v_i}u_2+\sum_{i=1}^d \partial_{x_i} (V
+\phi_{\infty}) \partial_{v_i} u_2-\sum_{i=1}^d v_i \partial_{x_i} u_2-u_1- \nu u_2.
 \end{equation*}
 These equations can be written  with respect to $u=\begin{pmatrix}
 u_1\\u_2
\end{pmatrix}:  $
 \begin{equation*}\partial_t u= \sigma\Delta_v u-\nu \sum_{i=1}^d v_i  \partial_{v_i}u+\sum_{i=1}^d \partial_{x_i} (V
+\phi_{\infty}) \partial_{v_i} u-\sum_{i=1}^d v_i \partial_{x_i} u-Q^Tu+\begin{pmatrix}
\nabla_x \partial_t \psi\\0
\end{pmatrix}.
\end{equation*}
 It allows us to compute the time derivative
   \begin{align}\label{time.derS}
\frac{d}{dt} \mathrm{S}_P[h(t)]\notag=&2\int_{\mathbb{R}^{2d}}
  u^T P  \partial_t u
 f_{\infty}dxdv\notag\\=& 2\sigma\int_{\mathbb{R}^{2d}} u^T P
 \Delta_v u  f_{\infty}dxdv-2 \nu \sum_{i=1}^d\int_{\mathbb{R}^{2d}} u^T P
   \partial_{v_i}u v_i f_{\infty}dxdv\notag\\&+ 2\sum_{i=1}^d \int_{\mathbb{R}^{2d}} u^TP
 \partial_{v_i} u \partial_{x_i} (V
+\phi_{\infty}) f_{\infty}dxdv-2\sum_{i=1}^d\int_{\mathbb{R}^{2d}} u^TP
   \partial_{x_i}u v_i f_{\infty}dxdv\notag\\&-\int_{\mathbb{R}^{2d}} u^T \{QP+PQ^T\}u f_{\infty}dxdv-2\int_{\mathbb{R}^{2d}} u^T P\begin{pmatrix}
\nabla_x \partial_t  \psi\\0
\end{pmatrix} f_{\infty}dxdv.
 \end{align}
  First, we consider the term in the second line of \eqref{time.derS} and use $\partial_{v_i}f_{\infty}=-\frac{\nu}{\sigma}v_if_{\infty}:$
  \begin{multline}\label{term1}
   2 \sigma\sum_{i=1}^d\int_{\mathbb{R}^{2d}} u^TP
  \partial^2_{v_iv_i} u f_{\infty}dxdv -2 \nu \sum_{i=1}^d\int_{\mathbb{R}^{2d}} u^TP
  \partial_{v_i}u  v_i f_{\infty}dxdv\\=-2 \sigma \sum_{i=1}^d \int_{\mathbb{R}^{2d}}\partial_{v_i}u^TP\partial_{v_i}u f_{\infty}dxdv.
 \end{multline}
  Next, we consider the terms in the third line of \eqref{time.derS}: \begin{multline}\label{third term1}
  2\sum_{i=1}^d \int_{\mathbb{R}^{2d}} u^TP
 \partial_{v_i} u \partial_{x_i}(V
+\phi_{\infty}) f_{\infty}dxdv\\= -2\sum_{i=1}^d \int_{\mathbb{R}^{2d}} u^TP
 \partial_{v_i} u \partial_{x_i} (V
+\phi_{\infty}) f_{\infty}dxdv\\+{\frac{2\nu}{\sigma}}\sum_{i=1}^d \int_{\mathbb{R}^{2d}} u^TP
 u  \partial_{x_i} (V
+\phi_{\infty}) v_i f_{\infty}dxdv
 \end{multline}
 and
 \begin{multline}\label{third term2}
 -2\sum_{i=1}^d\int_{\mathbb{R}^{2d}} u^TP
 \partial_{x_i}u  v_i  f_{\infty}dxdv\\=2\sum_{i=1}^d\int_{\mathbb{R}^{2d}} u^TP
   \partial_{x_i}u v_i f_{\infty}dxdv-{\frac{2\nu}{\sigma}}\sum_{i=1}^d\int_{\mathbb{R}^{2d}} u^TP
   u \partial_{x_i}(V
+\phi_{\infty}) v_i f_{\infty}dxdv.
 \end{multline}
 \eqref{third term1} and \eqref{third term2} show that the third line of  \eqref{time.derS} equals to zero.
 Combining \eqref{time.derS} and \eqref{term1},  we obtain the statement \eqref{derivS}.
\end{proof}
\begin{lemma}
Let $h$ be the  solution of \eqref{linVPFP} and $\psi$ be the solution of $-\Delta_x \psi=\int_{\mathbb{R}^d}hf_{\infty}dv.$ 
 Then, for all $t>0,$
\begin{equation}\label{psi tx}
\int_{\mathbb{R}^{2d}}|\nabla_x \partial_t  \psi|^2f_{\infty}dxdv\leq \frac{\sigma^2||\rho_{\infty}||^2_{L^{\infty}}}{\nu^2}\int_{\mathbb{R}^{2d}}|\nabla_v h|^2f_{\infty}dxdv.
\end{equation}
\end{lemma}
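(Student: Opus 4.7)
The plan is to reduce the bound on $\nabla_x\partial_t\psi$ to a bound on the current $\int v h f_\infty dv$, via the continuity equation, and then to convert that current into a velocity-gradient quantity using $\int v M(v)dv=0$.

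First, since $\nabla_x\partial_t\psi$ depends only on $x$ and $\int_{\mathbb{R}^d}M(v)dv=1$, the left-hand side reduces to
\begin{equation*}
\int_{\mathbb{R}^{2d}}|\nabla_x \partial_t \psi|^2 f_\infty \,dx\,dv = \int_{\mathbb{R}^d}|\nabla_x \partial_t\psi|^2 \rho_\infty \,dx \leq \|\rho_\infty\|_{L^\infty}\int_{\mathbb{R}^d}|\nabla_x\partial_t\psi|^2dx,
\end{equation*}
so it suffices to prove $\int_{\mathbb{R}^d}|\nabla_x\partial_t\psi|^2dx\leq (\sigma^2\|\rho_\infty\|_{L^\infty}/\nu^2)\int|\nabla_v h|^2 f_\infty dxdv$.

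Next, I differentiate the Poisson equation in $t$ to obtain $-\Delta_x \partial_t\psi=\int\partial_t h f_\infty dv$, then substitute the linearized equation \eqref{linVPFP}. Testing against $f_\infty$ and integrating by parts in $v$, the friction-diffusion block vanishes (from $\sigma\nabla_v f_\infty=-\nu v f_\infty$), the electric drift $\int v\cdot\nabla_x\psi f_\infty dv=\nabla_x\psi\cdot\rho_\infty\int vMdv$ vanishes by symmetry of $M$, and the transport and force terms combine—using $\nabla_x f_\infty=-(\nu/\sigma)\nabla_x(V+\phi_\infty)f_\infty$—into a pure divergence. The net identity is the continuity equation
\begin{equation*}
-\Delta_x\partial_t\psi=-\operatorname{div}_x\int_{\mathbb{R}^d} v\,h\,f_\infty\,dv.
\end{equation*}
Testing with $\partial_t\psi$ and integrating by parts, then applying Cauchy–Schwarz, gives
\begin{equation*}
\int_{\mathbb{R}^d}|\nabla_x\partial_t\psi|^2dx \leq \int_{\mathbb{R}^d}\Bigl|\int_{\mathbb{R}^d} v\,h\,f_\infty dv\Bigr|^2 dx.
\end{equation*}

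The key step that turns the velocity moment into a $v$-gradient quantity uses $vM=-(\sigma/\nu)\nabla_v M$: integrating by parts in $v$,
\begin{equation*}
\int_{\mathbb{R}^d} v\,h\,f_\infty dv = \rho_\infty(x)\int_{\mathbb{R}^d} v\,h\,M\,dv = \frac{\sigma}{\nu}\int_{\mathbb{R}^d}\nabla_v h\,f_\infty\,dv.
\end{equation*}
Then Cauchy–Schwarz in $v$ with $\int M dv=1$ yields
\begin{equation*}
\Bigl|\int_{\mathbb{R}^d} v\,h\,f_\infty\,dv\Bigr|^2 \leq \frac{\sigma^2}{\nu^2}\,\rho_\infty(x)\int_{\mathbb{R}^d}|\nabla_v h|^2 f_\infty dv.
\end{equation*}
Integrating in $x$ and bounding $\rho_\infty\leq \|\rho_\infty\|_{L^\infty}$ gives $\int|\nabla_x\partial_t\psi|^2dx\leq(\sigma^2\|\rho_\infty\|_{L^\infty}/\nu^2)\int|\nabla_v h|^2 f_\infty dxdv$, and combining with the first reduction produces the required bound with $\|\rho_\infty\|_{L^\infty}^2$.

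The only step requiring care is the derivation of the continuity equation: one must check that the force term $\nabla_x(V+\phi_\infty)\cdot\nabla_v h$ (after IBP in $v$) exactly cancels the lower-order contribution from integrating $v\cdot\nabla_x h$ by parts in $x$ (which generates $(\nu/\sigma)\nabla_x(V+\phi_\infty)\cdot\int v h f_\infty dv$ through $\nabla_x f_\infty$). Granted the regularity provided by Theorem \ref{th:linEXIS} and Lemma \ref{s.t.}, the integrations by parts are justified, and the rest is direct.
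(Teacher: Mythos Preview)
Your proof is correct and follows essentially the same route as the paper: reduce to the unweighted $L^2$ norm via $\|\rho_\infty\|_{L^\infty}$, derive the continuity equation $-\Delta_x\partial_t\psi=-\operatorname{div}_x\int v h f_\infty dv=-(\sigma/\nu)\operatorname{div}_x\int \nabla_v h f_\infty dv$, test against $\partial_t\psi$, and finish with Cauchy--Schwarz. The only cosmetic difference is in the last step: the paper applies Young's inequality to $\int \nabla_x\partial_t\psi\cdot\nabla_v h\,f_\infty\,dxdv$ and absorbs half of the left-hand side, whereas you first isolate $\|\nabla_x\partial_t\psi\|_{L^2}\le \|\int v h f_\infty dv\|_{L^2}$ and then bound the current pointwise in $x$ by Cauchy--Schwarz in $v$; the two orderings give the same constant.
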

\begin{proof}
We compute
\begin{align*}
-\Delta_x(\partial_t  \psi)&=\int_{\mathbb{R}^d}\partial_t hf_{\infty}dv\\
&=\int_{\mathbb{R}^d}[-v\cdot\nabla_x h+\nabla_x (V+ \phi_{\infty})\cdot \nabla_v h\\
&\quad +\sigma\Delta_v h-\nu v\cdot \nabla_v h-v\cdot\nabla_x \psi]f_{\infty}dv\\
&=\int_{\mathbb{R}^d}[-v\cdot\nabla_x h+\nabla_x (V+ \phi_{\infty})\cdot \nabla_v h]f_{\infty}dv\\
&=\int_{\mathbb{R}^d}[-v\cdot\nabla_x h+\frac{\nu}{\sigma} v\cdot \nabla_x (V+ \phi_{\infty})h]f_{\infty}dv\\&=-\text{div}_x \int_{\mathbb{R}^d}v hf_{\infty}dv
=-\frac{\sigma}{\nu}\text{div}_x \int_{\mathbb{R}^d}\nabla_v hf_{\infty}dv,
\end{align*}
where we integrated by parts and used $\nabla_v f_{\infty}=-\frac{\nu}{\sigma}vf_{\infty}.$
It lets us compute
\begin{align*}
\int_{\mathbb{R}^{2d}}| \nabla_x \partial_t \psi|^2f_{\infty}dxdv &=\int_{\mathbb{R}^{d}}|\nabla_x  \partial_t \psi|^2\rho_{\infty}dx\leq ||\rho_{\infty}||_{L^{\infty}}\int_{\mathbb{R}^{d}}|\nabla_x \partial_t  \psi|^2dx\\&=-||\rho_{\infty}||_{L^{\infty}}\int_{\mathbb{R}^{d}}\partial_t  \psi \Delta_x (\partial_t \psi)dx
\\&=-\frac{\sigma ||\rho_{\infty}||_{L^{\infty}}}{\nu}\int_{\mathbb{R}^{2d}}\partial_t  \psi \text{div}_x(\nabla_v hf_{\infty})dxdv\\
&=\frac{\sigma ||\rho_{\infty}||_{L^{\infty}}}{\nu}\int_{\mathbb{R}^{2d}}  \nabla_x \partial_t \psi \cdot \nabla_v hf_{\infty}dxdv \\
&\leq \frac{1}{2}\int_{\mathbb{R}^{2d}}| \nabla_x \partial_t \psi|^2f_{\infty}dxdv+\frac{\sigma^2 ||\rho_{\infty}||^2_{L^{\infty}}}{2\nu^2}\int_{\mathbb{R}^{2d}}|\nabla_v h|^2f_{\infty}dxdv.
\end{align*}
By simplifying this inequality we get \eqref{psi tx}.
\end{proof}

Let $\gamma> 0.$  We consider  a functional
\begin{align*}
\mathrm{E}[h]& \colonequals \gamma||h||^2+\mathrm{S}_P[h]\\
&\, \, \, =\gamma\left[ \int_{\mathbb{R}^{2d}}h^2f_{\infty}dxdv+\int_{\mathbb{R}^{d}}|\nabla_x \psi|^2dx\right]
+\int_{\mathbb{R}^{2d}}
\begin{pmatrix}\nabla_x (h+\psi)\\ \nabla_v h\end{pmatrix}^T  P\begin{pmatrix}\nabla_x (h+\psi)\\\nabla_v h\end{pmatrix} f_{\infty}dxdv.
\end{align*}
It is clear that $\mathrm{E}$ depends on the parameter $\gamma$ and the matrix $P,$ we will fix them later. We show that $\mathrm{E}$ is equivalent to the $H^1-$norm.
\begin{lemma}\label{lem:E equv. H1} Let $V$ be bounded from below and $e^{-\frac{\nu}{\sigma}V}\in L^1(\mathbb{R}^d).$   Let  $p_1$ and $p_2$ be the smallest  and the largest eigenvalues of $P,$ respectively. Then, for all  $h\in H^1(\mathbb{R}^{2d},f_{\infty}),$
\begin{equation}\label{E equiv}
\frac{\mathrm{E}[h]}{ \max\left\{\gamma+\theta^2_1(\gamma+2p_2||\rho_{\infty}||_{L^{\infty}}),\, 2p_2\right\}}
\leq ||h||^2_{H^1(\mathbb{R}^{2d},f_{\infty})}
\leq \frac{\mathrm{E}[h]}{\min\left\{\gamma,\, \frac{\gamma p_1}{\gamma+p_1||\rho_{\infty}||_{L^{\infty}}}\right\}},
\end{equation}
where $\theta_1$ is the constants appearing in \eqref{gr.psi in L^s}.
Moreover, if $e^{-\frac{\nu}{\sigma}V}$ satisfies the Poincar\'e inequality \eqref{Poin.V} and $\displaystyle \int_{\mathbb{R}^{2d}}hf_{\infty}dxdv=0,$ then
\begin{equation}\label{E<S}
 \mathrm{S}_P[h]\leq \mathrm{E}[h]\leq \frac{p_1+\gamma \kappa_2}{p_1}\mathrm{S}_P[h],
\end{equation}
where  $\kappa_2$ is the constants appearing in  \eqref{Poin.inq.}.
\end{lemma}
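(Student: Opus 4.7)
The plan is to control $\mathrm{E}[h]$ from above and below by standard quadratic-form bounds for $P$ together with the elliptic estimate from Lemma~\ref{lem.1}$(i)$ for the Poisson term $\nabla_x\psi$.

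\textbf{Upper bound on $\mathrm{E}[h]$ (left-most inequality in \eqref{E equiv}).} Using the eigenvalue bound $w^T P w \leq p_2 |w|^2$ and the trivial splitting $|\nabla_x(h+\psi)|^2 \leq 2|\nabla_x h|^2 + 2|\nabla_x\psi|^2$, I would estimate
\begin{equation*}
\mathrm{S}_P[h] \leq 2p_2 \int_{\mathbb{R}^{2d}}|\nabla_x h|^2 f_\infty\,dxdv + 2p_2\|\rho_\infty\|_{L^\infty}\int_{\mathbb{R}^d}|\nabla_x\psi|^2\,dx + p_2\int_{\mathbb{R}^{2d}}|\nabla_v h|^2 f_\infty\,dxdv,
\end{equation*}
where I used $\int_{\mathbb{R}^{2d}}|\nabla_x\psi|^2 f_\infty\,dxdv = \int_{\mathbb{R}^d}|\nabla_x\psi|^2 \rho_\infty\,dx \leq \|\rho_\infty\|_{L^\infty}\int_{\mathbb{R}^d}|\nabla_x\psi|^2\,dx$. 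Then I invoke \eqref{gr.psi in L^s} with $p=\tfrac{2d}{d+2}\in(1,2]$ (so that $pd/(d-p)=2$) to obtain $\int|\nabla_x\psi|^2\,dx \leq \theta_1^2\|h\|_{L^2(f_\infty)}^2$. Collecting terms then puts everything under the claimed maximum.

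\textbf{Lower bound on $\mathrm{E}[h]$ (right-most inequality in \eqref{E equiv}).} The subtle step is that $\mathrm{S}_P[h]$ involves $\nabla_x(h+\psi)$, not $\nabla_x h$. I would use Young's inequality $|\nabla_x h|^2 \leq (1+\lambda)|\nabla_x(h+\psi)|^2 + (1+1/\lambda)|\nabla_x\psi|^2$, yielding after rearrangement
\begin{equation*}
p_1\int|\nabla_x(h+\psi)|^2 f_\infty + \gamma\int|\nabla_x\psi|^2\,dx
\geq \frac{p_1}{1+\lambda}\int|\nabla_x h|^2 f_\infty + \Bigl(\gamma - \tfrac{p_1\|\rho_\infty\|_{L^\infty}}{\lambda}\Bigr)\int|\nabla_x\psi|^2\,dx.
\end{equation*}
Choosing $\lambda = p_1\|\rho_\infty\|_{L^\infty}/\gamma$ makes the second bracket vanish and gives coefficient $\gamma p_1/(\gamma + p_1\|\rho_\infty\|_{L^\infty})$ in front of $\int|\nabla_x h|^2 f_\infty$, matching exactly the $\min$ in \eqref{E equiv}. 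Combining with $\gamma\|h\|_{L^2(f_\infty)}^2$ and $p_1\int|\nabla_v h|^2 f_\infty$ concludes.

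\textbf{Comparison with $\mathrm{S}_P[h]$ (inequality \eqref{E<S}).} The left inequality is immediate from $\gamma\|h\|^2\geq 0$. For the right inequality I need $\|h\|^2 \leq (\kappa_2/p_1)\,\mathrm{S}_P[h]$, which reduces to
\begin{equation*}
\int_{\mathbb{R}^{2d}}h^2 f_\infty\,dxdv + \int_{\mathbb{R}^d}|\nabla_x\psi|^2\,dx \leq \kappa_2\int_{\mathbb{R}^{2d}}\bigl(|\nabla_x(h+\psi)|^2 + |\nabla_v h|^2\bigr)f_\infty\,dxdv.
\end{equation*}
The trick here (and the main obstacle) is the presence of both $\int h^2 f_\infty$ and $\int|\nabla_x\psi|^2$ on the left while the right-hand side features $\nabla_x(h+\psi)$. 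I would apply the Poincar\'e inequality from Lemma~\ref{lemma.Poin.} to the centered function $\tilde{g} \colonequals h + \psi - \bar{\psi}$ with $\bar{\psi} \colonequals \int_{\mathbb{R}^d}\psi\,\rho_\infty\,dx$, so that $\int \tilde{g}\,f_\infty\,dxdv = \int h\,f_\infty\,dxdv = 0$. The resulting bound expands to
\begin{equation*}
\int h^2 f_\infty + \int(\psi-\bar\psi)^2 \rho_\infty\,dx + 2\int h(\psi-\bar\psi)f_\infty \leq \kappa_2\int\bigl(|\nabla_x(h+\psi)|^2 + |\nabla_v h|^2\bigr)f_\infty.
\end{equation*}
The crucial observation is that the cross term equals $2\int|\nabla_x\psi|^2\,dx$: indeed, integrating in $v$ first and using the Poisson equation together with $\int h\,f_\infty\,dxdv=0$ (so that $\bar\psi\int\Delta_x\psi\,dx=0$) gives $\int h(\psi-\bar\psi)f_\infty = \int \psi(-\Delta_x\psi)\,dx = \int|\nabla_x\psi|^2\,dx$. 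Dropping the non-negative $\int(\psi-\bar\psi)^2\rho_\infty\,dx$ term and keeping only one copy of $\int|\nabla_x\psi|^2$ out of the factor~$2$ yields exactly the desired inequality. Bounding the right-hand side by $\mathrm{S}_P[h]/p_1$ then gives \eqref{E<S}.
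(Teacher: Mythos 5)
Your proposal is correct and follows essentially the same route as the paper: the eigenvalue bounds $p_1 I\leq P\leq p_2 I$, the estimate \eqref{gr.psi in L^s} with $p=\tfrac{2d}{d+2}$ for the upper bound, Young's inequality with exactly the same optimal parameter for the lower bound, and the Poincar\'e inequality applied to $h+\psi$ together with the identity $2\int h\psi f_\infty\,dxdv=2\int|\nabla_x\psi|^2dx$ for \eqref{E<S}. The only cosmetic difference is that you center $h+\psi-\bar\psi$ before applying \eqref{Poin.inq.}, whereas the paper keeps $h+\psi$ and uses the variance form plus $\int\psi^2f_\infty\geq(\int\psi f_\infty)^2$; these are the same estimate.
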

\begin{proof}
 As  $P$ is positive definite, we have $0<p_1\leq  p_2 .$ We estimate $\mathrm{E}[h]$ from above by using   $ P\leq  p_2 I:$
\begin{align*}
\mathrm{E}[h]&\leq \gamma\left[ \int_{\mathbb{R}^{2d}}h^2f_{\infty}dxdv+\int_{\mathbb{R}^{d}}|\nabla_x \psi|^2dx\right]\\&\, \, \, \, \, \, + p_2\left[\int_{\mathbb{R}^{2d}}
|\nabla_x (h+\psi)|^2 f_{\infty}dxdv+\int_{\mathbb{R}^{2d}}
|\nabla_v h|^2 f_{\infty}dxdv\right]\\
&\leq \gamma \int_{\mathbb{R}^{2d}}h^2f_{\infty}dxdv+(\gamma+2p_2||\rho_{\infty}||_{L^{\infty}})\int_{\mathbb{R}^{d}}|\nabla_x \psi|^2dx\\
&\, \, \, \, \, \, + 2p_2\int_{\mathbb{R}^{2d}}
|\nabla_x h|^2 f_{\infty}dxdv+p_2\int_{\mathbb{R}^{2d}}
|\nabla_v h|^2 f_{\infty}dxdv.
\end{align*}
We use \eqref{gr.psi in L^s} with $p=\frac{2d}{d+2}$ to get
\begin{align}\label{Eee1}
\mathrm{E}[h]
\leq \max\{\gamma+\theta^2_1(\gamma+2p_2||\rho_{\infty}||_{L^{\infty}}), 2p_2\}||h||^2_{H^1(\mathbb{R}^{2d},f_{\infty})}.
\end{align}
We estimate $\mathrm{E}[h]$ from below by using $p_1 I\leq P$
  \begin{align*}
\mathrm{E}[h]\geq &\gamma\left[ \int_{\mathbb{R}^{2d}}h^2f_{\infty}dxdv+\int_{\mathbb{R}^{d}}|\nabla_x \psi|^2dx\right] \nonumber\\&+ p_1\left[\int_{\mathbb{R}^{2d}}
|\nabla_x (h+\psi)|^2 f_{\infty}dxdv+\int_{\mathbb{R}^{2d}}
|\nabla_v h|^2 f_{\infty}dxdv\right].
\end{align*}
By the H\"older inequality
\begin{align*}\int_{\mathbb{R}^{2d}}
|\nabla_x (h+\psi)|^2 f_{\infty}dxdv\geq& \frac{\gamma}{\gamma+p_1||\rho_{\infty}||_{L^{\infty}}}\int_{\mathbb{R}^{2d}}
|\nabla_x h|^2 f_{\infty}dxdv\\ &-\frac{\gamma}{p_1||\rho_{\infty}||_{L^{\infty}}}\int_{\mathbb{R}^{2d}}
|\nabla_x \psi|^2 f_{\infty}dxdv\\
\geq& \frac{\gamma}{\gamma+p_1|\rho_{\infty}||_{L^{\infty}}}\int_{\mathbb{R}^{2d}}|\nabla_x h|^2 f_{\infty}dxdv-\frac{\gamma}{p_1}\int_{\mathbb{R}^{d}}|\nabla_x \psi|^2 {dx}.
\end{align*}
Using the last two estimates
\begin{align}\label{Eee2}
\mathrm{E}[h]\geq& \gamma \int_{\mathbb{R}^{2d}}h^2f_{\infty}dxdv+ \frac{\gamma p_1}{\gamma+p_1||\rho_{\infty}||_{L^{\infty}}}\int_{\mathbb{R}^{2d}}
|\nabla_x h|^2 f_{\infty}dxdv+p_1 \int_{\mathbb{R}^{2d}}
|\nabla_v h|^2 f_{\infty}dxdv\notag\\
\geq & \min\left\{\gamma,\, \frac{\gamma p_1}{\gamma+p_1||\rho_{\infty}||_{L^{\infty}}}\right\}\left[
\int_{\mathbb{R}^{2d}}h^2f_{\infty}dxdv+\int_{\mathbb{R}^{2d}}
|\nabla_x h|^2 f_{\infty}dxdv+\int_{\mathbb{R}^{2d}}
|\nabla_v h|^2 f_{\infty}dxdv \right]
\notag\\ =& \min\left\{\gamma,\, \frac{\gamma p_1}{\gamma+p_1||\rho_{\infty}||_{L^{\infty}}}\right\}||h||^2_{H^1(\mathbb{R}^{2d},f_{\infty})}.
\end{align}
\eqref{Eee1} and \eqref{Eee2} provide \eqref{E equiv}.

We now prove \eqref{E<S}.
The definition of $ \mathrm{E}$ implies $\mathrm{S}_P[h]\leq \mathrm{E}[h].$  Since $P\geq p_1 I,$ we have
 \begin{equation*}\mathrm{S}_P[h]\geq p_1\left[\int_{\mathbb{R}^{2d}}
|\nabla_x (h+\psi)|^2f_{\infty}dxdv+ \int_{\mathbb{R}^{2d}}|\nabla_v (h+\psi)|^2f_{\infty}dxdv\right].
\end{equation*}
  Using the Poincar\'e inequality \eqref{Poin.inq.} and $\int_{\mathbb{R}^{2d}} h f_{\infty}dxdv=0$
 \begin{equation*}\mathrm{S}_P(h)\geq \frac{p_1}{ \kappa_2}\left[\int_{\mathbb{R}^{2d}}
 (h+\psi)^2f_{\infty}dxdv-\left( \int_{\mathbb{R}^{2d}} \psi f_{\infty}dxdv\right)^2\right].
\end{equation*}
The H\"older inequality and  $-\Delta_x \psi=\int_{\mathbb{R}^{d}} h f_{\infty}dv$ imply
\begin{align*}\int_{\mathbb{R}^{2d}}
 (h+\psi)^2f_{\infty}dxdv-&\left( \int_{\mathbb{R}^{2d}} \psi f_{\infty}dxdv\right)^2 \geq \int_{\mathbb{R}^{2d}}
 h^2f_{\infty}dxdv+2\int_{\mathbb{R}^{2d}}
 h\psi f_{\infty}dxdv\\ &=\int_{\mathbb{R}^{2d}}
 h^2f_{\infty}dxdv-2\int_{\mathbb{R}^{d}}
 \psi \Delta_x \psi dx\\&=\int_{\mathbb{R}^{2d}}
 h^2f_{\infty}dxdv+2\int_{\mathbb{R}^{d}}
 |\nabla_x\psi|^2 dx\geq ||h||^2.
 \end{align*}
Thus, $\mathrm{S}_P[h]\geq \frac{p_1}{ \kappa_2}||h||^2 $ and \eqref{E<S} follows.
\end{proof}
We now prove the main result of this subsection.
\begin{theorem}\label{th:hypocoercivity}
 Let $h$ be the solution of \eqref{linVPFP} with  an initial data  $h_0 \in H^1(\mathbb{R}^{2d}, f_{\infty})
$ such that $\int_{\mathbb{R}^{2d}}h_0 f_{\infty} dxdv=0.$
Let the assumptions $(A1),$ $(A2)$ and $(A3)$ hold. Then,  there exist a positive constant $\gamma$ and a constant, symmetric, positive definite matrix $P$ such that
\begin{equation}\label{dt E<-labmda E}
\frac{d}{dt}\mathrm{E}[h(t)]\leq -2\lambda \mathrm{E}[h(t)], \, \, \, \, \, t\geq 0
\end{equation}
holds for some $\lambda>0$ depending     $\gamma$ and $P.$ In particular, \begin{equation*}\label{Cor:H^1 decay}\mathrm{E}[h(t)]\leq e^{-2\lambda t} \mathrm{E}[h_0], \, \, \, \, \, t\geq 0
.\end{equation*}
\end{theorem}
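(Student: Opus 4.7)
The plan is to differentiate $\mathrm{E}[h(t)]$ along the flow by combining Lemma~\ref{lemma ||h||^2} and Lemma~\ref{lemma main}, then choose the constant $\gamma$ and the matrix $P$ so that the resulting expression is bounded above by $-2\lambda \mathrm{E}[h(t)]$. Concretely, I would take a block matrix of the standard hypocoercive form
\begin{equation*}
P=\begin{pmatrix} \alpha I & \beta I \\ \beta I & I \end{pmatrix}, \qquad \alpha > \beta^2 > 0,
\end{equation*}
so that $P$ is positive definite, and I would tune $\alpha,\beta$ together with $\gamma$ at the end. Putting the two lemmas together with $u=(\nabla_x(h+\psi),\nabla_v h)^T$, the derivative of $\mathrm{E}[h(t)]$ has the form
\begin{equation*}
\tfrac{d}{dt}\mathrm{E}[h(t)] = -2\gamma\sigma\!\int|\nabla_v h|^2 f_\infty -2\sigma\!\sum_{i=1}^d\!\int \!\!\begin{pmatrix}\nabla_x\partial_{v_i}h\\ \nabla_v\partial_{v_i}h\end{pmatrix}^{\!T}\!\! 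P\begin{pmatrix}\nabla_x\partial_{v_i}h\\ \nabla_v\partial_{v_i}h\end{pmatrix}\!f_\infty - \int u^T(QP+PQ^T)u\, f_\infty - 2\int u^T P \begin{pmatrix}\nabla_x\partial_t\psi\\ 0\end{pmatrix} f_\infty.
\end{equation*}
The second-derivative term is non-positive since $P\succ 0$ and will simply be discarded; the real work lies in the last two integrals.

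Next, I would compute $QP+PQ^T$ explicitly; it has diagonal blocks $2\beta I$ and $2\nu I - 2\beta H$, and off-diagonal block $(1+\nu\beta)I - \alpha H$, where $H=\partial^2(V+\phi_\infty)/\partial x^2$. Away from the Hessian terms the matrix with $H=0$ is uniformly positive definite provided $\beta<\nu$ and the parameters are chosen so that the Schur complement is positive, which is standard. The $H$-dependent entries, however, prevent a uniform pointwise lower bound, so I would handle them by splitting: bound the contribution of the $H$-terms by Cauchy-Schwarz pointwise in $x$, and then apply the Villani-type inequality \eqref{Vil.s lemma} from Lemma~\ref{Vill estimates} (which uses assumption (A2)) to absorb $\int \|H\|_F^2\, |\nabla_x(h+\psi)|^2 f_\infty$ and $\int \|H\|_F^2|\nabla_v h|^2 f_\infty$ into $\varepsilon\int |\nabla_{x,v}\nabla_v h|^2 f_\infty$ plus a small multiple of $\int|\nabla_{x,v} h|^2 f_\infty$. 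The former is controlled by the discarded good second-derivative term in $\nabla_v$, and the latter is dominated by the pure quadratic form once $\varepsilon$ is taken small enough.

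For the $\partial_t\psi$ coupling I would use the $2ab\le \eta a^2+\eta^{-1}b^2$ trick together with estimate \eqref{psi tx}:
\begin{equation*}
\Bigl|2\!\int u^T P\begin{pmatrix}\nabla_x\partial_t\psi\\ 0\end{pmatrix}f_\infty\Bigr|
\le \eta\int|\nabla_x(h+\psi)|^2 f_\infty + \frac{C(P)}{\eta}\cdot\frac{\sigma^2\|\rho_\infty\|_{L^\infty}^2}{\nu^2}\int|\nabla_v h|^2 f_\infty,
\end{equation*}
and take $\eta$ small. The first term is absorbed into the positive-definite quadratic form from $QP+PQ^T$, while the second is absorbed into the free $-2\gamma\sigma\int|\nabla_v h|^2 f_\infty$ by choosing $\gamma$ sufficiently large. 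After all these absorptions one arrives at an inequality of the form
\begin{equation*}
\tfrac{d}{dt}\mathrm{E}[h(t)]\le -c\,\mathrm{S}_P[h(t)]
\end{equation*}
for some $c>0$. Finally, since $\int h_0 f_\infty\,dxdv=0$ is preserved by the flow, the Poincar\'e inequality of Lemma~\ref{lemma.Poin.} is available, and \eqref{E<S} gives $\mathrm{S}_P[h(t)]\ge \tfrac{p_1}{p_1+\gamma\kappa_2}\mathrm{E}[h(t)]$, producing the desired $\tfrac{d}{dt}\mathrm{E}\le -2\lambda\mathrm{E}$ with an explicit $\lambda$; Gr\"onwall then yields $\mathrm{E}[h(t)]\le e^{-2\lambda t}\mathrm{E}[h_0]$.

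The main obstacle I foresee is the bookkeeping of the chain of smallness parameters: $\varepsilon$ (for absorbing the Hessian terms into the second-derivative term), $\eta$ (for the $\partial_t\psi$ coupling), the off-diagonal parameter $\beta$, and the overall weight $\gamma$ must be ordered so that every undesired integral is swallowed by a good one, and none of the positivity is lost. Getting the matrix inequality $QP+PQ^T\succeq \delta\, \mathrm{diag}(I,I)$ modulo Hessian-type perturbations, with $\delta$ quantitatively controlled by $\alpha,\beta,\nu$, is the delicate algebraic step, and this is where explicit expressions for $\lambda,\gamma,P$ will be extracted.
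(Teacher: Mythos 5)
Your overall route is the same as the paper's: differentiate $\mathrm{E}=\gamma\|h\|^2+\mathrm{S}_P[h]$ via Lemma~\ref{lemma ||h||^2} and Lemma~\ref{lemma main}, absorb the $\partial_t\psi$ coupling with Young's inequality and \eqref{psi tx}, absorb the leftover $|\nabla_v h|^2$ terms with the $-2\gamma\sigma\int|\nabla_v h|^2f_\infty$ term by taking $\gamma$ large, conclude $\frac{d}{dt}\mathrm{E}\le -c\,\mathrm{S}_P$, and convert $\mathrm{S}_P$ into $\mathrm{E}$ through \eqref{E<S} (Poincar\'e) before applying Gr\"onwall. That skeleton is correct and matches the paper (which uses $P$ with blocks $\varepsilon^3 I,\varepsilon^2 I,2\varepsilon I$, a rescaled version of your ansatz).

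However, your treatment of the Hessian terms contains a step that fails as written. You propose to split the cross term so that $\|H\|_F^2$ also multiplies $|\nabla_x(h+\psi)|^2$ and then to control $\int \|H\|_F^2|\nabla_x(h+\psi)|^2 f_\infty$ by the Villani-type inequality \eqref{Vil.s lemma}, absorbing the result into $\varepsilon\int|\nabla_{x,v}\nabla_v h|^2f_\infty$. But \eqref{Vil.s lemma} applied to the components of $\nabla_x(h+\psi)$ produces $\sum_i\int|\nabla_x\partial_{x_i}(h+\psi)|^2f_\infty$, i.e.\ pure second $x$-derivatives of $h$ (and of $\psi$), not mixed derivatives of $\nabla_v h$; these are controlled by nothing in the entropy dissipation, since the second-derivative term in Lemma~\ref{lemma main} only involves $\nabla_x(\partial_{v_i}h)$ and $\nabla_v(\partial_{v_i}h)$. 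The correct (and paper's) remedy is to perform Young's inequality asymmetrically so that $\|H\|_F^2$ lands exclusively on $|\nabla_v h|^2$ (both in the off-diagonal block and in the $vv$-block $2\nu I-2\beta H$); then \eqref{Vil.s lemma} yields only $\int|\nabla_v h|^2f_\infty$ plus $\sum_i\int|\nabla_x(\partial_{v_i}h)|^2f_\infty$, and the latter is exactly what the second-derivative dissipation term absorbs — so that term cannot be ``simply discarded'' as you first state. A further, smaller inaccuracy: with $H=0$ the matrix $-(QP+PQ^T)$ is never positive definite for your block ansatz (its determinant is $-(c-\nu\beta)^2\le 0$ for $P$ with blocks $\alpha I,\beta I,c I$), so no choice of $\beta<\nu$ makes it coercive on its own; strict positivity of the final quadratic form in $\nabla_v h$ comes only after adding the $2\gamma\sigma$ contribution from the $\gamma\|h\|^2$ part. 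Your $\gamma$-absorption does supply this, so that point is a misstatement rather than a fatal error, but the Hessian bookkeeping above must be corrected for the proof to close.
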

\begin{proof}
 Lemma \ref{lemma ||h||^2} and Lemma \ref{lemma main} show that
\begin{align}\label{dt E}
\frac{d }{dt}\mathrm{E}[h(t)]=&-2\sigma\gamma \int_{\mathbb{R}^{2d}}|\nabla_v h|^2f_{\infty}dxdv\notag\\&
-2\sigma\int_{\mathbb{R}^{2d}}\left\{\sum_{i=1}^d \begin{pmatrix}
\nabla_x ( \partial_{v_i} h)\\ \nabla_v(\partial_{v_i} h)
\end{pmatrix}^T  P\begin{pmatrix}
\nabla_x (\partial_{v_i} h)\\\nabla_v(\partial_{v_i} h)
\end{pmatrix} \right\}f_{\infty}dxdv\notag\\&-\int_{\mathbb{R}^{2d}}  \begin{pmatrix}
\nabla_x (h+\psi)\\ \nabla_v h
\end{pmatrix}^T  \left\{QP+PQ^T\right\} \begin{pmatrix}
\nabla_x (h+\psi)\\ \nabla_v h
\end{pmatrix} f_{\infty}dxdv\notag\\&
-2\int_{\mathbb{R}^{2d}} \begin{pmatrix}
\nabla_x (h+\psi)\\ \nabla_v h
\end{pmatrix}^T P \begin{pmatrix}
\partial_t \nabla_x \psi\\ 0
\end{pmatrix} f_{\infty}dxdv.
\end{align}
We choose the matrix
$P\colonequals \begin{pmatrix}
\varepsilon^3I& \varepsilon^2I\\
\varepsilon^2I& 2\varepsilon I
\end{pmatrix}
$
with $\varepsilon>0$ which will be fixed later. It is easy to check that $P$ is positive definite. We denote $W\colonequals V+\phi_{\infty}.$ Then, we have
$$QP+PQ^T=\begin{pmatrix}
-2\varepsilon^2I& \varepsilon^3 \frac{\partial^2 W}{\partial x^2}-(\nu\varepsilon^2+2\varepsilon)I\\ \varepsilon^3 \frac{\partial^2 W}{\partial x^2}-(\nu\varepsilon^2+2\varepsilon)I& 2\varepsilon^2 \frac{\partial^2 W}{\partial x^2}-4\nu\varepsilon I
\end{pmatrix}.$$
This helps to compute
\begin{align}\label{es.QP+}
&-\int_{\mathbb{R}^{2d}}  \begin{pmatrix}
\nabla_x (h+\psi)\notag\\ \nabla_v h
\end{pmatrix}^T  \left\{QP+PQ^T\right\} \begin{pmatrix}
\nabla_x (h+\psi)\\ \nabla_v h
\end{pmatrix} f_{\infty}dxdv\notag\\
=&-2\varepsilon^2\int_{\mathbb{R}^{2d}}|\nabla_x (h+\psi)|^2f_{\infty}dxdv\notag\\&+2\int_{\mathbb{R}^{2d}}\nabla_x^T (h+\psi)\left(\varepsilon^3 \frac{\partial^2 W}{\partial x^2}-(\nu \varepsilon^2+2\varepsilon)I\right)\nabla_v h f_{\infty}dxdv\notag\\&+\int_{\mathbb{R}^{2d}}\nabla_v^T h\left(2\varepsilon^2 \frac{\partial^2 W}{\partial x^2}-4\nu\varepsilon I\right)\nabla_v h f_{\infty}dxdv.
\end{align}
We work on the  terms which contain $ \frac{\partial^2 W}{\partial x^2}.$ We use the H\"older inequality and \eqref{Vil.s lemma} to get
\begin{align}\label{xWv}
&2\varepsilon^3\int_{\mathbb{R}^{2d}}\nabla_x^T (h+\psi) \frac{\partial^2 W}{\partial x^2}\nabla_v h f_{\infty}dxdv\notag\\\leq &2\varepsilon^3\int_{\mathbb{R}^{2d}}|\nabla_x (h+\psi)|\left|\left| \frac{\partial^2 W}{\partial x^2}\right|\right|_F|\nabla_v h| f_{\infty}dxdv\notag\\ \leq &\varepsilon^2 \int_{\mathbb{R}^{2d}}|\nabla_x (h+\psi)|^2  f_{\infty}dxdv+{\varepsilon^4}\int_{\mathbb{R}^{2d}}\left|\left|\frac{\partial^2 W}{\partial x^2}\right|\right|_F^2|\nabla_v h|^2 f_{\infty}dxdv\notag\\ \leq & \varepsilon^2\int_{\mathbb{R}^{2d}}|\nabla_x (h+\psi)|^2  f_{\infty}dxdv+{\varepsilon^4\kappa_3}\int_{\mathbb{R}^{2d}}|\nabla_v h |^2 f_{\infty}dxdv\notag\\&+{\varepsilon^4\kappa_3}\sum_{i=1}^d\int_{\mathbb{R}^{2d}}|\nabla_x  (\partial_{v_i} h)|^2 f_{\infty}dxdv.
\end{align}
Similarly,
\begin{multline}\label{vWv}
2\varepsilon^2\int_{\mathbb{R}^{2d}}\nabla_v^T h \frac{\partial^2 W}{\partial x^2}\nabla_v h f_{\infty}dxdv\leq 2\varepsilon^2\int_{\mathbb{R}^{2d}}|\nabla_v h| \left| \left|\frac{\partial^2 W}{\partial x^2}\right|\right|_F |\nabla_v h| f_{\infty}dxdv\\ \leq\int_{\mathbb{R}^{2d}}|\nabla_v h|^2  f_{\infty}dxdv+\varepsilon^4 \int_{\mathbb{R}^{2d}}\left|\left|\frac{\partial^2 W}{\partial x^2}\right|\right|_F^2|\nabla_v h|^2 f_{\infty}dxdv\\  \leq (1+\varepsilon^4 \kappa_3)\int_{\mathbb{R}^{2d}}|\nabla_v h|^2  f_{\infty}dxdv+{\varepsilon^4 \kappa_3}\sum_{i=1}^d\int_{\mathbb{R}^{2d}}|\nabla_x  (\partial_{v_i} h)|^2 f_{\infty}dxdv.
\end{multline}
Combining \eqref{es.QP+}, \eqref{xWv} and \eqref{vWv} we get
{\small \begin{equation}\label{simp.QP+..}
\begin{split}
&-\int_{\mathbb{R}^{2d}}  \begin{pmatrix}
\nabla_x (h+\psi)\\ \nabla_v h
\end{pmatrix}^T  \left\{QP+PQ^T\right\} \begin{pmatrix}
\nabla_x (h+\psi)\\ \nabla_v h
\end{pmatrix} f_{\infty}dxdv\\
\leq &2{\varepsilon^4 \kappa_3}\sum_{i=1}^d\int_{\mathbb{R}^{2d}}|\nabla_x  (\partial_{v_i} h)|^2 f_{\infty}dxdv\\
&-\int_{\mathbb{R}^{2d}}  \begin{pmatrix}
\nabla_x (h+\psi)\\ \nabla_v h
\end{pmatrix}^T\begin{pmatrix}
\varepsilon^2 I& (\nu \varepsilon^2+2\varepsilon)I\\ (\nu \varepsilon^2+2\varepsilon)I& (4\nu \varepsilon-1-2\varepsilon^4\kappa_3)I
\end{pmatrix} \begin{pmatrix}
\nabla_x (h+\psi)\\ \nabla_v h
\end{pmatrix} f_{\infty}dxdv.
\end{split}
\end{equation}}
Next, we estimate  the last term of \eqref{dt E}:
\begin{multline}\label{simhPpsi}
-2\int_{\mathbb{R}^{2d}} \begin{pmatrix}
\nabla_x (h+\psi)\\ \nabla_v h
\end{pmatrix}^T P \begin{pmatrix}
\nabla_x \partial_t  \psi\\0
\end{pmatrix} f_{\infty}dxdv\\=-2\varepsilon^3\int_{\mathbb{R}^{2d}} \nabla_x (h+\psi) \cdot  \nabla_x \partial_t \psi f_{\infty}dxdv-2\varepsilon^2\int_{\mathbb{R}^{2d}} \nabla_v h\cdot  \nabla_x \partial_t  \psi  f_{\infty}dxdv\\ \leq \varepsilon^4 \int_{\mathbb{R}^{2d}}|\nabla_x (h+\psi)|^2f_{\infty}dxdv+{\varepsilon^2}\int_{\mathbb{R}^{2d}}|\nabla_x \partial_t  \psi|^2f_{\infty}dxdv\\+2\varepsilon^2\sqrt{\int_{\mathbb{R}^{2d}} |\nabla_v h|^2 f_{\infty}dxdv}\sqrt{\int_{\mathbb{R}^{2d}} |\nabla_x \partial_t  \psi |^2 f_{\infty}dxdv}\\
\leq \varepsilon^4 \int_{\mathbb{R}^{2d}}|\nabla_x (h+\psi)|^2f_{\infty}dxdv\\+\varepsilon^2\left(\frac{\sigma^2||\rho_{\infty}||^2_{L^{\infty}}}{\nu^2}+2\frac{\sigma ||\rho_{\infty}||_{L^{\infty}}}{\nu}\right)\int_{\mathbb{R}^{2d}}| \nabla_v h|^2f_{\infty}dxdv,
\end{multline}
where we used the H\"older inequality and \eqref{psi tx}.
We gather  \eqref{simp.QP+..} and \eqref{simhPpsi} to estimate \eqref{dt E}
\begin{multline}\label{sim dtE}
\frac{d }{dt}\mathrm{E}[h(t)]
\leq -2\sigma\int_{\mathbb{R}^{2d}}\left\{\sum_{i=1}^d \begin{pmatrix}
\nabla_x ( \partial_{v_i} h)\\ \nabla_v(\partial_{v_i} h)
\end{pmatrix}^T
 \begin{pmatrix}
 (\varepsilon^3-\frac{\varepsilon^4 \kappa_3}{\sigma})I & \varepsilon^2 I\\
 \varepsilon^2 I& 2\varepsilon I
 \end{pmatrix}
\begin{pmatrix}
\nabla_x (\partial_{v_i} h)\\\nabla_v(\partial_{v_i} h)
\end{pmatrix} \right\}f_{\infty}dxdv\\-\int_{\mathbb{R}^{2d}}  \begin{pmatrix}
\nabla_x (h+\psi)\\ \nabla_v h
\end{pmatrix}^T
P_1
 \begin{pmatrix}
\nabla_x (h+\psi)\\ \nabla_v h
\end{pmatrix} f_{\infty}dxdv,
\end{multline}
where $$P_1\colonequals \begin{pmatrix}
(\varepsilon^2-\varepsilon^4) I& (\nu\varepsilon^2+2\varepsilon)I\\ (\nu\varepsilon^2+2\varepsilon)I& (2\gamma\sigma-1+4\nu \varepsilon-\varepsilon^2(\frac{\sigma^2||\rho_{\infty}||^2_{L^{\infty}}}{\nu^2}+\frac{2\sigma||\rho_{\infty}||_{L^{\infty}}}{\nu})-2\varepsilon^4\kappa_3)I
\end{pmatrix}.$$
We choose $\gamma>0$ and $\varepsilon>0$ such that   the  matrices in the first and second lines of \eqref{sim dtE} satisfy $$\begin{pmatrix}
 (\varepsilon^3-\frac{\varepsilon^4 \kappa_3}{\sigma})I & \varepsilon^2 I\\
 \varepsilon^2 I& 2\varepsilon I
 \end{pmatrix}\geq 0 \, \, \, \, \text{ and } \, \, \, \, P_1>0.$$
It is possible to choose such $\gamma$ and $\varepsilon,$ for example, if $\varepsilon$ is small enough and $\gamma$ is large enough, then the conditions above are satisfied. Moreover, there is $\tilde{\lambda}=\tilde{\lambda}(\varepsilon,\gamma)>0$ such that
$$P_1\geq 2\tilde{\lambda}P.
$$ Using this estimate in \eqref{sim dtE} we get
 \begin{equation}\label{dt E<S}\frac{d}{dt}\mathrm{E}[h(t)]\leq -2\tilde{\lambda} \mathrm{S}_P[h(t)].
 \end{equation}
By using  \eqref{E<S}
$$\frac{d}{dt}\mathrm{E}[h(t)]\leq -2\lambda \mathrm{E}[h(t)]\, \, \, \, \, \text{ for all} \, \, \,  t\geq 0$$
with $\displaystyle \lambda\colonequals \tilde{\lambda} \frac{p_1}{p_1+\gamma \kappa_2}.$ Then, the Gr\"onwall inequality implies $$\mathrm{E}[h(t)]\leq e^{-2\lambda t} \mathrm{E}[h_0].$$ 
\end{proof}
\begin{Remark}\emph{\begin{itemize}
\item[1.]
 We note that $\gamma$ and $P$ such that Theorem \ref{th:hypocoercivity} holds are not unique. But the decay rate $\lambda$ depends on $\gamma$ and $P.$ To get a better rate, one has to optimize $\lambda=\lambda(\gamma, P)$ with respect to $\gamma$ and $P.$
  \item[2.] The Poincar\'e inequality is essential to get the inequality \eqref{dt E<-labmda E} and so the exponential decay. When the Poincar\'e inequality is not valid, we only get  the bound $\mathrm{E}[h(t)]\leq  \mathrm{E}[h_0]$ by \eqref{dt E<S}.
\end{itemize}}\end{Remark}
\subsection{Hypoelliptic regularity}
In this section we show that, for any initial data  $h_0 \in L^2(\mathbb{R}^{2d}, f_{\infty}),$ the solution $h(t)$ of the linearized equation is in $ H^1(\mathbb{R}^{2d}, f_{\infty})$ for all $t>0.$
\begin{theorem}\label{th:hypoellip}
 Let $h$ be the solution of \eqref{linVPFP} with  an initial data  $h_0 \in L^2(\mathbb{R}^{2d}, f_{\infty})
.$
Under the assumptions $(A1)$ and $(A2)$,  for any $t_0>0,$  there are explicitly computable constants $C_1>0$ and $C_2>0$  such that
\begin{equation}\label{hypel1}
\int_{\mathbb{R}^{2d}}|\nabla_x h(t)|^2f_{\infty}dxdv\leq \frac{C_1}{t^3}\int_{\mathbb{R}^{2d}}h^2_0f_{\infty}dxdv
\end{equation} and
\begin{equation}\label{hypoel2}
\int_{\mathbb{R}^{2d}}|\nabla_v h(t)|^2f_{\infty}dxdv\leq \frac{C_2}{t}\int_{\mathbb{R}^{2d}}h^2_0f_{\infty}dxdv
\end{equation}
hold for all $t \in (0,t_0].$
\end{theorem}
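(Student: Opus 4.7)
\medskip

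The plan is to follow the hypoelliptic regularization strategy of H\'erau (cf.\ \cite[Theorem 1.1]{Herau}) adapted to the Vlasov-Poisson-Fokker-Planck setting. The characteristic rates $t^{-1}$ and $t^{-3}$ come from the commutator identity $[\partial_{v_i},v\cdot\nabla_x]=\partial_{x_i}$: the Fokker-Planck operator dissipates directly in $v$ (one power of $t$), while regularity in $x$ is transferred through this commutator, which costs two extra powers of $t$. I will therefore introduce a time-weighted variant of the functional $\mathrm{S}_P$ from Lemma \ref{lemma main}, namely
\begin{equation*}
\mathrm{D}[h(t)]\colonequals A\int_{\mathbb{R}^{2d}}h^2 f_\infty\,dxdv+\int_{\mathbb{R}^{2d}}\begin{pmatrix}\nabla_x(h+\psi)\\\nabla_v h\end{pmatrix}^T P(t)\begin{pmatrix}\nabla_x(h+\psi)\\\nabla_v h\end{pmatrix} f_\infty\,dxdv,
\end{equation*}
with $A>0$ a large constant to be chosen, and $P(t)\colonequals\begin{pmatrix} a t^3 I & b t^2 I\\ b t^2 I & c t I\end{pmatrix}$ for positive constants $a,b,c$ to be fixed. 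Note that $P(t)$ is positive definite iff $ac>b^2$, and $\mathrm{D}[h(0)]=A\|h_0\|^2_{L^2(f_\infty)}$.

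Next I would differentiate $\mathrm{D}[h(t)]$ along the flow. The matrix computation of Lemma \ref{lemma main} applies verbatim and yields the usual $-2\sigma\int(\cdots)P(t)(\cdots)$ dissipation, the symmetrized drift term $-\int(\cdots)(QP(t)+P(t)Q^T)(\cdots)$, and the Poisson correction $-2\int(\cdots)P(t)\begin{pmatrix}\nabla_x\partial_t\psi\\0\end{pmatrix}$; to this I must add the new contribution coming from $\dot P(t)=\begin{pmatrix} 3at^2 I & 2bt I\\ 2bt I & cI\end{pmatrix}$. Computing $QP+PQ^T$ explicitly, the leading $t^2$ coefficient in front of $|\nabla_x(h+\psi)|^2$ is $3a-2b$, so choosing $2b>3a$ produces the crucial negative term $-(2b-3a)t^2|\nabla_x(h+\psi)|^2$. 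Similarly the $t^0$ coefficient in front of $|\nabla_v h|^2$ is $c-2\sigma A+O(t)$, which is controlled by taking $A$ sufficiently large compared to $c$. All terms involving $\frac{\partial^2(V+\phi_\infty)}{\partial x^2}$ are absorbed through the Cauchy-Schwarz inequality combined with the estimate \eqref{Vil.s lemma} from Lemma \ref{Vill estimates}, exactly as in \eqref{xWv}-\eqref{vWv}; this is where assumption $(A2)$ enters and generates the extra higher-order dissipation term $\varepsilon^4\kappa_3 t^{\ast}\sum_i |\nabla_x(\partial_{v_i}h)|^2$, which is controlled by the genuine diffusion $-2\sigma\int(\cdots)P(t)(\cdots)$ provided the time weights are small (this is automatic on $(0,t_0]$ for $t_0$ finite, possibly after rescaling $a,b,c$ by a small parameter $\varepsilon$). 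The Poisson correction is handled exactly as in \eqref{simhPpsi}, using \eqref{psi tx} to bound $\|\nabla_x\partial_t\psi\|_{L^2(f_\infty)}$ by $\|\nabla_v h\|_{L^2(f_\infty)}$, which is again absorbable.

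With $a,b,c,A$ chosen in this order, the upshot is an inequality of the form $\frac{d}{dt}\mathrm{D}[h(t)]\leq 0$ for all $t\in(0,t_0]$, so that integrating from $0$ to $t$ gives
\begin{equation*}
ct\int_{\mathbb{R}^{2d}}|\nabla_v h(t)|^2 f_\infty\,dxdv+at^3\int_{\mathbb{R}^{2d}}|\nabla_x(h+\psi)(t)|^2 f_\infty\,dxdv\leq \mathrm{D}[h(t)]\leq A\|h_0\|_{L^2(f_\infty)}^2,
\end{equation*}
after using the positive-definiteness of $P(t)$ to bound the quadratic form from below by its diagonal. This immediately yields \eqref{hypoel2}. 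To obtain \eqref{hypel1}, I split $\nabla_x h=\nabla_x(h+\psi)-\nabla_x\psi$ and estimate the $\nabla_x\psi$ contribution via Lemma \ref{lem.1} (specifically \eqref{gr.psi in L^s} with $p=\frac{2d}{d+2}$) together with the contraction $\|h(t)\|_{L^2(f_\infty)}\leq\|h_0\|_{L^2(f_\infty)}$ from Lemma \ref{lemma ||h||^2}; since $t\in(0,t_0]$ this $\nabla_x\psi$ piece is bounded uniformly in $t$ and is dominated by the $t^{-3}$ factor on the right-hand side.

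The main obstacle is the bookkeeping in step two: balancing the three time weights $t,t^2,t^3$, the diffusion matrix $P(t)$, the drift through $QP(t)+P(t)Q^T$, the Poisson correction, and the Hessian perturbation all at once requires choosing the constants $a,b,c,A$ in a specific order so that every bad term is absorbed by a good one. Standard arguments for the kinetic Fokker-Planck equation (with $\psi=0$) give an explicit choice, and the new difficulty here is only the Poisson correction, which thanks to \eqref{psi tx} behaves like a lower-order perturbation of the $|\nabla_v h|^2$ term and does not destroy the structure.
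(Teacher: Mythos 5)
Your strategy is essentially the paper's own: the proof of Theorem \ref{th:hypoellip} uses the functional $\mathrm{E}[h]=\gamma||h||^2+\mathrm{S}_{P(t)}[h]$ with the time-dependent matrix $P(t)=\begin{pmatrix}\varepsilon^3t^3I&\varepsilon^2t^2I\\ \varepsilon^2t^2I&2\varepsilon tI\end{pmatrix}$ (your $a,b,c$ with $a=\varepsilon^3$, $b=\varepsilon^2$, $c=2\varepsilon$), differentiates it via Lemma \ref{lemma main} plus the $\partial_tP$ contribution, absorbs the Hessian terms through \eqref{Vil.s lemma} and the Poisson correction through \eqref{psi tx}, chooses $\varepsilon$ small and the zeroth-order weight large so that the derivative is nonpositive on $(0,t_0]$, and then reads off \eqref{hypel1}--\eqref{hypoel2} from a quantitative diagonal lower bound on $P(t)$ as in \eqref{P>}. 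Your time weights, the sign bookkeeping for $QP+PQ^T$ versus $\dot P$, and the endgame (splitting $\nabla_xh=\nabla_x(h+\psi)-\nabla_x\psi$ and using \eqref{gr.psi in L^s}) all match the paper.

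There is, however, one genuine gap in the step ``$\frac{d}{dt}\mathrm{D}\le 0$'': your zeroth-order term is $A\int h^2f_{\infty}dxdv$, whereas the paper uses $\gamma||h||^2=\gamma\left(\int h^2f_{\infty}dxdv+\int|\nabla_x\psi|^2dx\right)$. By \eqref{dt h^2}, $\frac{d}{dt}\int h^2f_{\infty}dxdv=-2\sigma\int|\nabla_vh|^2f_{\infty}dxdv-2\int v\cdot\nabla_x\psi\,hf_{\infty}dxdv$, so with your functional the indefinite cross term $-2A\int v\cdot\nabla_x\psi\,hf_{\infty}dxdv$ survives and is nowhere addressed; the whole point of Lemma \ref{lemma ||h||^2} is that adding the field energy $\int|\nabla_x\psi|^2dx$ makes this term cancel exactly, leaving only the clean dissipation $-2\sigma\gamma\int|\nabla_vh|^2f_{\infty}dxdv$ that your choice ``$A$ large compared to $c$'' relies on. The gap is fixable in two ways: either include $\int|\nabla_x\psi|^2dx$ in the zeroth-order part (the paper's choice), or estimate $2\left|\int v\cdot\nabla_x\psi\,hf_{\infty}dxdv\right|\le\delta\kappa_4'\int\left(h^2+|\nabla_vh|^2\right)f_{\infty}dxdv+\delta^{-1}\theta_1^2||\rho_{\infty}||_{L^{\infty}}\int h^2f_{\infty}dxdv$ using \eqref{Vil.s lemma2} and \eqref{gr.psi in L^s}, absorb the $\nabla_vh$ piece into $-2\sigma A$ by taking $\delta$ small, and settle for $\frac{d}{dt}\mathrm{D}\le C\,\mathrm{D}$, which by Gr\"onwall still gives $\mathrm{D}[h(t)]\le e^{Ct_0}A||h_0||^2_{L^2(\mathbb{R}^{2d},f_{\infty})}$ on $(0,t_0]$ and hence the same conclusion. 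A minor further point: positive-definiteness of $P(t)$ does not by itself let you bound the quadratic form below by its full diagonal; you need a quantitative bound of the form \eqref{P>} (a fixed fraction of the diagonal), which only affects the constants $C_1,C_2$.
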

\begin{proof}
In order to prove the short-time regularization of \eqref{hypel1} and \eqref{hypoel2} we consider now  the functional $\mathrm{E}$ with  a matrix $P$ which depends explicitly on time $t,$ i.e.
 $$P=P(t)\colonequals \begin{pmatrix}
\varepsilon^3 t^3 I& \varepsilon^2 t^2I\\
\varepsilon^2 t^2I& {2\varepsilon t I}
\end{pmatrix}.
$$
 We shall fix $\varepsilon>0$ later. It is easy to check
 \begin{equation}\label{P>}
 P(t)\geq\begin{pmatrix}
 {\frac{\varepsilon^3 t^3}{3} I}&0\\
 0&  {\frac{\varepsilon t}{2}I}
 \end{pmatrix}
 \end{equation} which implies that $P(t)$ is positive definite for all $t>0.$  Our goal is to show that $\mathrm{E}[h(t)]$  decreases. To this end we compute the time derivative of $\mathrm{E}[h(t)].$ We follow the proofs of Lemma \ref{lemma ||h||^2} and Lemma \ref{lemma main} to compute the time derivative of $\mathrm{E},$ but we need to take into account that $P$ depends on time $t:$ 
\begin{align*}
\frac{d }{dt}\mathrm{E}[h(t)]=&-2\sigma \gamma \int_{\mathbb{R}^{2d}}|\nabla_v h|^2f_{\infty}dxdv\\
&-2\sigma\int_{\mathbb{R}^{2d}}\left\{\sum_{i=1}^d \begin{pmatrix}
\nabla_x ( \partial_{v_i} h)\\ \nabla_v(\partial_{v_i} h)
\end{pmatrix}^T  P\begin{pmatrix}
\nabla_x (\partial_{v_i} h)\\\nabla_v(\partial_{v_i} h)
\end{pmatrix} \right\}f_{\infty}dxdv\\&-\int_{\mathbb{R}^{2d}}  \begin{pmatrix}
\nabla_x (h+\psi)\\ \nabla_v h
\end{pmatrix}^T  \left\{QP+PQ^T\right\} \begin{pmatrix}
\nabla_x (h+\psi)\\ \nabla_v h
\end{pmatrix} f_{\infty}dxdv\\
&-2\int_{\mathbb{R}^{2d}} \begin{pmatrix}
\nabla_x (h+\psi)\\ \nabla_v h
\end{pmatrix}^T P \begin{pmatrix}
\partial_t \nabla_x \psi\\ 0
\end{pmatrix} f_{\infty}dxdv\\&+\int_{\mathbb{R}^{2d}}  \begin{pmatrix}
\nabla_x h\\ \nabla_v h
\end{pmatrix}^T  \partial_t P \begin{pmatrix}
\nabla_x h\\ \nabla_v h
\end{pmatrix} f_{\infty}dxdv.
\end{align*}
We estimate the terms on the right as \eqref{xWv}-\eqref{simhPpsi} (where we need to replace $\varepsilon$ to $\varepsilon t$) and obtain
\begin{multline}\label{sim dtE t}
\frac{d }{dt}\mathrm{E}[h(t)]\leq 
-2\sigma\int_{\mathbb{R}^{2d}}\left\{\sum_{i=1}^d \begin{pmatrix}
\nabla_x ( \partial_{v_i} h)\\ \nabla_v(\partial_{v_i} h)
\end{pmatrix}^T
 \begin{pmatrix}
 (\varepsilon^3t^3-\frac{\varepsilon^4t^4 \kappa_3}{\sigma})I & \varepsilon^2t^2 I\\
 \varepsilon^2t^2I& 2\varepsilon t I
 \end{pmatrix}
\begin{pmatrix}
\nabla_x (\partial_{v_i} h)\\\nabla_v(\partial_{v_i} h)
\end{pmatrix} \right\}f_{\infty}dxdv\\-\int_{\mathbb{R}^{2d}} \begin{pmatrix}
\nabla_x (h+\psi)\\ \nabla_v h
\end{pmatrix}^T
[P_1-\partial_t P]
 \begin{pmatrix}
\nabla_x (h+\psi)\\ \nabla_v h
\end{pmatrix} f_{\infty}dxdv,
\end{multline}
where
\begin{small}
$$P_1\colonequals \begin{pmatrix}
(\varepsilon^2t^2-\varepsilon^4 t^4) I& (\nu\varepsilon^2 t^2+2\varepsilon t)I\\ (\nu\varepsilon^2 t^2+2\varepsilon t)I& \left(2\gamma \sigma-1+4\nu \varepsilon t-\frac{\sigma^2 ||\rho_{\infty}||^2_{L^{\infty}}+2\sigma \nu||\rho_{\infty}||_{L^{\infty}}}{\nu^2} \varepsilon^2 t^2-2\varepsilon^4 t^4 \kappa_3\right)I
\end{pmatrix}.$$
\end{small}
Since $\partial_t P=
\begin{pmatrix}
3\varepsilon^3 t^2 I & {2\varepsilon^2 t I}\\
2\varepsilon^2 t I & 2 \varepsilon I
\end{pmatrix},$ we have\\
$P_1-\partial_t P=$\\

$\begin{pmatrix}
([\varepsilon^2-3\varepsilon^3]t^2-\varepsilon^4 t^4) I& (\nu\varepsilon^2 t^2+2[\varepsilon-\varepsilon^2] t)I\\ (\nu\varepsilon^2 t^2+2[\varepsilon-\varepsilon^2] t)I& \left(2(\gamma\sigma-\varepsilon)-1 +4\nu\varepsilon t- \frac{\sigma^2 ||\rho_{\infty}||^2_{L^{\infty}}+2\sigma \nu||\rho_{\infty}||_{L^{\infty}}}{\nu^2} \varepsilon^2 t^2 -2\varepsilon^4 t^4 \kappa_3\right)I
\end{pmatrix}.$
We choose $\gamma$ and $\varepsilon$ such that, for all $t \in [0,t_0],$  the  matrices in the first and the second lines of \eqref{sim dtE t} are positive semi-definite, i.e. $$\begin{pmatrix}
 (\varepsilon^3t^3-\frac{\varepsilon^4t^4 \kappa_3}{\sigma})I & \varepsilon^2t^2 I\\
 \varepsilon^2t^2I& 2\varepsilon t I
 \end{pmatrix}\geq 0, \, \, \, \, \, \, P_1-\partial_t P\geq 0.$$
It is possible to choose such $\gamma$ and $\varepsilon,$ for example, if $\gamma$ is large and  $\varepsilon$ is small enough, then these inequalities hold.
Then we get $$\frac{d }{dt}\mathrm{E}[h(t)]\leq 0, \, \, \, \, \, t\in (0,t_0].$$ This yields that  $\mathrm{E}[h(t)]$ is decreasing in $[0,t_0].$   $\mathrm{E}[h(t=0)]= \gamma ||h_0||^2$ and \eqref{gr.psi in L^s} with $p=\frac{2d}{d+2}$ show
\begin{equation}\label{E<gamma ||||} \mathrm{E}[h(t)]\leq \gamma ||h_0||^2\leq \gamma(1+\theta_1^2)\int_{\mathbb{R}^{2d}}h_0^2f_{\infty}dxdv, \, \, \,  \, \, \forall t \in [0,t_0].
\end{equation}
On the other hand, we have by \eqref{P>} that
\begin{align*}
\mathrm{E}[h(t)] &\geq \gamma\int_{\mathbb{R}^{2d}}h^2(t)f_{\infty}dxdv+\gamma \int_{\mathbb{R}^{2d}}|\nabla_x \psi(t)|^2dx\\ &\, \, \, \,\, \,  +\frac{\varepsilon^3t^3}{3}\int_{\mathbb{R}^{2d}}|\nabla_x h(t)+\nabla_x \psi(t)|^2f_{\infty}dxdv+\frac{\varepsilon t}{2}\int_{\mathbb{R}^{2d}}|\nabla_v h(t)|^2f_{\infty}dxdv. 
\end{align*}
If we use  the estimate \begin{align*}
\int_{\mathbb{R}^{2d}}
|\nabla_x (h+\psi)|^2 f_{\infty}dxdv &\geq \frac{1}{2}\int_{\mathbb{R}^{2d}}
|\nabla_x h|^2 f_{\infty}dxdv-\int_{\mathbb{R}^{2d}}
|\nabla_x \psi|^2 f_{\infty}dxdv\\ &\geq \frac{1}{2}\int_{\mathbb{R}^{2d}}
|\nabla_x h|^2 f_{\infty}dxdv-||\rho_{\infty}||_{L^{\infty}}\int_{\mathbb{R}^{d}}
|\nabla_x \psi|^2 dv,
\end{align*}
we get
$$ \mathrm{E}[h(t)]\geq \gamma\int_{\mathbb{R}^{2d}}|h(t)|^2f_{\infty}dxdv+\left(\gamma-\frac{||\rho_{\infty}||_{L^{\infty}}\varepsilon^3t^3}{3}\right)\int_{\mathbb{R}^{2d}}|\nabla_x \psi(t)|^2dx$$
$$\hspace{20pt}+\frac{\varepsilon^3t^3}{6}\int_{\mathbb{R}^{2d}}|\nabla_x h(t)|^2f_{\infty}dxdv+\frac{\varepsilon t}{2}\int_{\mathbb{R}^{2d}}|\nabla_v h(t)|^2f_{\infty}dxdv.$$
 If we take $\gamma$ large enough so that $\gamma-\frac{||\rho_{\infty}||_{L^{\infty}}\varepsilon^3t_0^3}{3}\geq 0,$ then 
  \begin{equation}\label{E>||.||}\mathrm{E}[h(t)]\geq \frac{\varepsilon^3t^3}{6}\int_{\mathbb{R}^{2d}}|\nabla_x h(t)|^2f_{\infty}dxdv+\frac{\varepsilon t}{2}\int_{\mathbb{R}^{2d}}|\nabla_v h(t)|^2f_{\infty}dxdv, \, \, \, \, \, \,  \, \,  \forall t\in (0,t_0].
 \end{equation} \eqref{E<gamma ||||} and \eqref{E>||.||} show that the statement of the theorem holds with constants $C_1\colonequals \frac{6\gamma(1+\theta_1^2)}{\varepsilon^3} $ and $C_2\colonequals \frac{2\gamma(1+\theta_1^2)}{\varepsilon}. $
\end{proof}

 Now we are ready to prove Theorem \ref{th:lin} concerning the linearized Vlasov-Poissson- Fokker-Planck equation.

\begin{proof}[\textbf{Proof of Theorem \ref{th:lin}}]
The proofs of Theorem \ref{th:lin}$\,(i)$ and Theorem \ref{th:lin}$\,(ii)$  follow from Theorem \ref{th:linEXIS} and  Theorem \ref{th:hypoellip}.

  We use  Theorem \ref{th:hypocoercivity} to prove Theorem \ref{th:lin}$\,(iii).$ Let $\mathrm{E}$ be the functional  in Theorem \ref{th:hypocoercivity}. Then, \eqref{dt E<-labmda E} can be written as
 $$\frac{d}{dt}\left(e^{2\lambda t}\mathrm{E}[h(t)]\right)\leq 0.$$ For any $t>t_0,$ we integrate this inequality in $[t_0, t]$ to get
 \begin{equation}\label{E_0}\mathrm{E}[h(t)]\leq e^{-2\lambda (t-t_0)} \mathrm{E}[h(t_0)].
 \end{equation}
 By \eqref{E<S} we have
 \begin{align*}
 {\mathrm{E}[h(t_0)]}& \leq  \frac{p_1+\gamma \kappa_2}{p_1}\mathrm{S}_P[h(t_0)]\\
 & \leq \frac{p_2(p_1+\gamma \kappa_2)}{p_1}\left[\int_{\mathbb{R}^{2d}}
|\nabla_x h(t_0)+\nabla_x \psi(t_0)|^2 f_{\infty}dxdv+\int_{\mathbb{R}^{2d}}
|\nabla_v h(t_0)|^2 f_{\infty}dxdv\right]\\
&\leq \frac{p_2(p_1+\gamma \kappa_2)}{p_1}\left[2\int_{\mathbb{R}^{2d}}
|\nabla_x h(t_0)|^2f_{\infty}+\int_{\mathbb{R}^{2d}}
|\nabla_v h(t_0)|^2 f_{\infty}dxdv\right]\\
& +2|\rho_{\infty}||_{L^{\infty}}\frac{p_2(p_1+\gamma \kappa_2)}{p_1}\int_{\mathbb{R}^{2d}}|\nabla_x \psi(t_0)|^2 dxdv,
\end{align*}
where $p_1$ and $p_2$ are the smallest and the largest eigenvalues of the matrix $P$ which we defined in the proof of Theorem \ref{th:hypocoercivity}.  Because of \eqref{gr.psi in L^s} with $p=\frac{2d}{d+2}$ and the Poincar\'e inequality \eqref{Poin.inq.}, the integral
$\int_{\mathbb{R}^{d}}
|\nabla_x \psi(t_0)|^2dx$ is bounded by  $$\int_{\mathbb{R}^{2d}}
|\nabla_x h(t_0)|^2 f_{\infty}dxdv+\int_{\mathbb{R}^{2d}}
|\nabla_v h(t_0)|^2 f_{\infty}dxdv.$$
Thus, there is a constant $C>0$ such that
$$
 {\mathrm{E}[h(t_0)]}\leq C\left[\int_{\mathbb{R}^{2d}}
|\nabla_x h(t_0)|^2 f_{\infty}dxdv+\int_{\mathbb{R}^{2d}}
|\nabla_v h(t_0)|^2 f_{\infty}dxdv\right],$$
and by Theorem \ref{th:hypoellip}
\begin{equation}\label{E_01}
 {\mathrm{E}[h(t_0)]}\leq C\max\{C_1t_0^{-3},{C_2}{t_0^{-1}}\} \int_{\mathbb{R}^{2d}}h^2_0f_{\infty}dxdv.
 \end{equation}
If we combine  \eqref{E_0}, \eqref{E_01} and \eqref{E equiv}, we obtain
$$\int_{\mathbb{R}^{2d}}h^2(t)f_{\infty}dxdv+\int_{\mathbb{R}^{2d}}
|\nabla_x h(t)|^2 f_{\infty}dxdv+\int_{\mathbb{R}^{2d}}
|\nabla_v h(t)|^2 f_{\infty}dxdv$$ $$\leq  \frac{\mathrm{E}[h(t)]}{ \min\left\{\gamma,\, \frac{\gamma p_1}{\gamma+p_1||\rho_{\infty}||_{L^{\infty}}}\right\}} \leq  \frac{e^{-2\lambda (t-t_0)} \mathrm{E}[h(t_0)]}{ \min\left\{\gamma,\, \frac{\gamma p_1}{\gamma+p_1||\rho_{\infty}||_{L^{\infty}}}\right\}}
$$ $$ \leq  \frac{Ce^{2 \lambda t_0}\max\{C_1t_0^{-3},{C_2}{t_0^{-1}}\}}{ \min\left\{\gamma,\, \frac{\gamma p_1}{\gamma+p_1||\rho_{\infty}||_{L^{\infty}}}\right\}} e^{-2\lambda t} \int_{\mathbb{R}^{2d}}h^2_0f_{\infty}dxdv. $$
This proves \eqref{decay} with the constant $\displaystyle C_3\colonequals\sqrt{ \frac{Ce^{2 \lambda t_0}\max\{C_1t_0^{-3},{C_2}{t_0^{-1}}\}}{ \min\left\{\gamma,\, \frac{\gamma p_1}{\gamma+p_1||\rho_{\infty}||_{L^{\infty}}}\right\}}}.$

We now prove \eqref{psi W}. We have from \eqref{gr.psi in L^s}
$$||\nabla_x \psi(t)||_{ L^{\frac{pd}{d-p}}(\mathbb{R}^d)}\leq \theta_1||h(t)||_{ L^2(\mathbb{R}^{2d},f_{\infty})}\leq \theta_1||h(t)||_{ H^1 (\mathbb{R}^{2d},f_{\infty})}, \, \, \, \, \, \forall\, p\in (1,2].$$
 The relation $\mathscr{L}^{\frac{2d}{d-2}}_{1}(\mathbb{R}^d)=W^{1, \frac{2d}{d-2}}(\mathbb{R}^d)$ (see \cite{Adams}) and \eqref{gr.psi in Lalpha} with $\alpha=1$  show $$||\nabla_x \psi(t)||_{W^{1, \frac{2d}{d-2}}(\mathbb{R}^d)}\leq \theta_2||h(t)||_{ H^{1}_x(\mathbb{R}^{2d},f_{\infty})}\leq \theta_2||h(t)||_{ H^{1}(\mathbb{R}^{2d},f_{\infty})}$$ for all $t>0.$ These estimates and \eqref{decay} imply
\begin{align*}||\nabla_x \psi(t)||_{ L^{\frac{pd}{d-p}}(\mathbb{R}^d)}+||\nabla_x \psi(t)||_{W^{1, \frac{2d}{d-2}}(\mathbb{R}^d)} & \leq (\theta_1+\theta_2)||h(t)||_{ H^1 (\mathbb{R}^{2d},f_{\infty})}\\
&\leq C_3(\theta_1+\theta_2) e^{-\lambda t}||h_0||_{ L^{2}(\mathbb{R}^{2d},f_{\infty})}
\end{align*}
for all $t\geq t_0.$
This proves \eqref{psi W} with the constant $C_4\colonequals  C_3(\theta_1+\theta_2).$

\end{proof}

\section{The nonlinear Vlasov-Poisson-Fokker-Planck system in dimension three}
In this section we work on the nonlinear Vlasov-Poisson-Fokker-Planck system \eqref{hVPFP} in  dimension $d=3$. We mention that we get \eqref{hVPFP} from \eqref{VPFP} by taking $ h \colonequals \frac{f}{f_{\infty}}-1$ and $\psi\colonequals \phi-\phi_{\infty}.$
To prove Theorem \ref{Short time} and Theorem \ref{th:VPFP}, we will consider the integral version of the system \eqref{INTe}, and then apply a fixed point argument to find a candidate for a solution, then prove existence, uniqueness and stability.
\subsection{Semigroup estimates }

  We recall that the linearized system  \eqref{linVPFP} can be written as
$$\partial_t h+Kh=0$$
with the operator $Kh \colonequals v\cdot\nabla_x h-\nabla_x (V+ \phi_{\infty})\cdot \nabla_v h+v\cdot\nabla_x \psi-\sigma\Delta_v h+\nu v\cdot \nabla_v h.$ By Theorem \ref{th:linEXIS} $K$ generates a $C_0$ semigroup $e^{tK}$ on $ L^2(\mathbb{R}^{2d}, f_{\infty}).$ 
We define a subspace of $L^2(\mathbb{R}^{2d}, f_{\infty})$ $$\mathcal{H}\colonequals \{g \in L^2(\mathbb{R}^{2d}, f_{\infty}): \int_{\mathbb{R}^{2d}}gf_{\infty}dxdv=0\}.$$
 Here the norm of  $\mathcal{H}$ is the norm of $L^2(\mathbb{R}^{2d}, f_{\infty}).$
  Let  $h_0\in \mathcal{H},$ then $h(t)=e^{-t K }h_0, \, \, t>0$ is the solution of \eqref{linVPFP} and it is in  $H^1(\mathbb{R}^{2d}, f_{\infty})$ by Theorem \ref{th:lin} $(ii).$
 By integrating \eqref{linVPFP}, we obtain
$$\int_{\mathbb{R}^{2d}}e^{-t K }h_0f_{\infty}dxdv=\int_{\mathbb{R}^{2d}}h_0f_{\infty}dxdv=0.$$
 Therefore, $e^{-t K }$ maps  $\mathcal{H}$ into $\mathcal{H}\cap H^1(\mathbb{R}^{2d}, f_{\infty})$ for all $t>0.$\\

We will need the following estimates on $e^{-t K }.$
\begin{lemma}\label{lem: e^tK0}
Let $d\geq 3,$ $\alpha \in [0,1],$ $t_0>0, $ the assumption $(A1)$ and $ (A2)$ hold.   There are positive constants $\mathscr{C}_1,$  $\mathscr{C}_2,$  $\mathscr{C}_3,$  $\mathscr{C}_4$ and  $\mathscr{C}_5$  such that
\begin{itemize}
\item[(i)]\begin{equation}\label{e:L^20}
||e^{-t K }h_0||_{L^2(\mathbb{R}^{2d}, f_{\infty})}\leq  \mathscr{C}_1 ||h_0||_{L^2(\mathbb{R}^{2d}, f_{\infty})},   \, \forall\, t\in [0, t_0],\,  \forall\, h_0 \in L^2(\mathbb{R}^{2d}, f_{\infty}).
\end{equation}
\item[(ii)]
\begin{equation}\label{e:H^alpha0}
|| e^{-t K }h_0||_{H^{\alpha }_x(\mathbb{R}^{2d}, f_{\infty})}\leq  \mathscr{C}_2  ||h_0||_{H^{\alpha }_x(\mathbb{R}^{2d}, f_{\infty})}, \,  \forall\, t\in [0, t_0],  \,  \forall\, h_0 \in  H^{\alpha}_x(\mathbb{R}^{2d}, f_{\infty}).
\end{equation}
\item[(iii)]
 \begin{equation}\label{ee:H_x^alpha0}
||e^{-t K }h_0||_{H^{\alpha}_x(\mathbb{R}^{2d}, f_{\infty})}\leq  \mathscr{C}_3 (1+t^{-\frac{3\alpha}{2}})||h_0||_{L^2(\mathbb{R}^{2d}, f_{\infty})}, \,  \forall\, t\in (0, t_0], \,  \forall\, h_0 \in L^2(\mathbb{R}^{2d}, f_{\infty}).
\end{equation}
\item[(iv)] \begin{equation}\label{e:H^1_v0}
|| e^{-t K }h_0||_{H^{1 }_v(\mathbb{R}^{2d}, f_{\infty})}\leq  \mathscr{C}_4
||h_0||_{H^{1 }_v(\mathbb{R}^{2d}, f_{\infty})}, \, \forall\, t\in [0, t_0],  \,  \forall\, h_0 \in  H^{1}_v(\mathbb{R}^{2d}, f_{\infty}).
\end{equation}
\item[(v)]
\begin{equation}\label{ee:H_v^10}
||e^{-t K }h_0||_{H^1_v(\mathbb{R}^{2d}, f_{\infty})}\leq  \mathscr{C}_5(1+ t^{-\frac{1}{2}}) ||h_0||_{L^2(\mathbb{R}^{2d}, f_{\infty})},   \,  \forall\, t\in (0, t_0], \,  \forall\, h_0 \in L^2(\mathbb{R}^{2d}, f_{\infty}).
\end{equation}
\end{itemize}
\end{lemma}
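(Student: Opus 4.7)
Parts $(i)$, $(iii)$, $(v)$ follow quickly from Lemma~\ref{lemma ||h||^2}, Theorem~\ref{th:hypoellip}, and Lemma~\ref{lem.1}; parts $(ii)$ and $(iv)$ require an adaptation of the Lyapunov-functional technique of Theorem~\ref{th:hypoellip} with modified time-dependent weight matrices.

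For $(i)$, Lemma~\ref{lemma ||h||^2} gives $\|h(t)\|^2\le \|h_0\|^2$, and \eqref{gr.psi in L^s} with $p=\tfrac{2d}{d+2}$ bounds $\|\nabla_x\psi_0\|_{L^2(\mathbb{R}^d)}$ by $\theta_1\|h_0\|_{L^2(f_\infty)}$, producing $\mathscr{C}_1=\sqrt{1+\theta_1^2}$. Combining $(i)$ with \eqref{t^2} yields $\|h(t)\|_{H^1_v}^2\le[(1+\theta_1^2)+C_2t^{-1}]\|h_0\|_{L^2(f_\infty)}^2$, hence $(v)$. For $(iii)$ at $\alpha=1$, combining $(i)$ with \eqref{t^3} and the equivalence \eqref{norm H_x^1} gives $\|h(t)\|_{H^1_x}^2\lesssim(1+t^{-3})\|h_0\|_{L^2(f_\infty)}^2$; for intermediate $\alpha\in(0,1)$ I interpolate between $e^{-tK}\colon L^2(f_\infty)\to L^2(f_\infty)$ and $e^{-tK}\colon L^2(f_\infty)\to H^1_x(f_\infty)$ using the complex-interpolation identity $H^\alpha_x(f_\infty)=(L^2(f_\infty),H^1_x(f_\infty))_{[\alpha]}$ inherited from the Bessel potential spaces, together with the elementary bound $(1+t^{-3/2})^\alpha\le 1+t^{-3\alpha/2}$.

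For $(iv)$, I use the time-dependent weight $\tilde{P}(t)=\begin{pmatrix}\varepsilon^3 t^3 I & \varepsilon^2 t^2 I\\ \varepsilon^2 t^2 I & (\delta+2\varepsilon t)I\end{pmatrix}$ with a small constant $\delta>0$ and form the functional $\mathrm{E}$ exactly as in the proof of Theorem~\ref{th:hypoellip}. Since $\tilde{P}(0)=\operatorname{diag}(0,\delta I)$, one has $\mathrm{E}[h_0]=\gamma\|h_0\|^2+\delta\|\nabla_v h_0\|_{L^2(f_\infty)}^2\lesssim\|h_0\|_{H^1_v}^2$, and for $t\in(0,t_0]$ with $\varepsilon t_0$ small relative to $\delta$ a Young-type lower bound gives $\mathrm{E}[h(t)]\ge\gamma\|h(t)\|_{L^2(f_\infty)}^2+\tfrac{\delta}{2}\|\nabla_v h(t)\|_{L^2(f_\infty)}^2$; the monotonicity $\mathrm{E}[h(t)]\le\mathrm{E}[h_0]$ then yields $(iv)$. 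For $(ii)$ at $\alpha=1$ the symmetric choice $\hat{P}(t)=\begin{pmatrix}(\delta+\varepsilon^3 t^3)I & \varepsilon^2 t^2 I\\ \varepsilon^2 t^2 I & 2\varepsilon t I\end{pmatrix}$ produces an $H^1_x$-controlled functional (using \eqref{norm H_x^1} and \eqref{Vil.s lemma1} to compare $\int|\nabla_x h|^2 f_\infty$ with the weighted $H^1_x$-norm), and the general $\alpha\in(0,1)$ case of $(ii)$ follows by complex interpolation with $(i)$.

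The main obstacle is verifying the monotonicity $\tfrac{d}{dt}\mathrm{E}\le 0$ for $\tilde{P}$ and $\hat{P}$: one must redo the positivity analysis leading to \eqref{sim dtE t} and check that the analogues of the matrix $P_1-\partial_t P$ remain positive semi-definite. The $\delta$-perturbation contributes a beneficial $2\nu\delta$ to the $(2,2)$-entry but a detrimental $\delta$ to the off-diagonal entry, so a Schur-complement computation with $\delta$ sufficiently small relative to $\varepsilon$ and $\gamma$ sufficiently large is required to close the argument on the fixed interval $[0,t_0]$; once this positivity is secured, the rest of the scheme mirrors the proof of Theorem~\ref{th:hypoellip}.
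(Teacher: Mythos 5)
Parts $(i)$, $(iii)$ and $(v)$ of your proposal are correct and are exactly the paper's argument: Lemma \ref{lemma ||h||^2} plus \eqref{gr.psi in L^s} for $(i)$, then the hypoelliptic bounds \eqref{t^3}, \eqref{t^2} together with \eqref{norm H_x^1} and interpolation for $(iii)$ and $(v)$. The gap is in $(ii)$ and $(iv)$. The obstacle you flag there is not a technicality that "$\delta$ small, $\gamma$ large" can fix; it is structural, and your scheme fails on an initial time interval. Take $(iv)$ with $\tilde P(t)$: since $Q$ has the constant block $\begin{pmatrix}0&I\\ -\partial^2W/\partial x^2&\nu I\end{pmatrix}$, the matrix $Q\tilde P+\tilde PQ^T$ inherits an off-diagonal entry containing $c(t)=\delta+2\varepsilon t$, so the analogue of $P_1-\partial_tP$ has off-diagonal entry $\approx\delta$ as $t\to0$ while its $(1,1)$ entry is $O(\varepsilon^2t^2)$; at $t=0$ the relevant block is $\begin{pmatrix}0&\delta I\\ \delta I& C I\end{pmatrix}$, which is not positive semi-definite for any $\delta>0$, and the Schur condition $\varepsilon^2t^2\,(2\gamma\sigma+\cdots)\gtrsim(\delta+O(t))^2$ fails for all $t\lesssim\delta/(\varepsilon\sqrt{\gamma\sigma})$, no matter how you tune $\delta,\varepsilon,\gamma$. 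A Gr\"onwall repair does not close either: the uncontrolled cross term $\delta\int|\nabla_x(h+\psi)||\nabla_vh|f_\infty$ is only bounded by $C\,t^{-3/2}\,\mathrm{E}[h(t)]$ (the $\nabla_x$-coercivity of $\mathrm{E}$ is $O(t^3)$, the $\nabla_v$-coercivity $O(\delta)$), and $t^{-3/2}$ is not integrable at $t=0$. The same degeneracy kills $(ii)$ with $\hat P(t)$: the constant $\delta$ in the $(1,1)$ block produces the term $2\delta\int\nabla_x^T(h+\psi)\,\frac{\partial^2W}{\partial x^2}\,\nabla_vh\,f_\infty$, whose Young splitting needs either a $|\nabla_x(h+\psi)|^2$ absorber with coefficient bounded away from zero (only $-2\varepsilon^2t^2$ is available) or dissipation of second $x$-derivatives (which the functional does not provide), so the monotonicity $\frac{d}{dt}\mathrm{E}\le0$ again fails near $t=0$.

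For comparison, the paper proves $(ii)$ and $(iv)$ by a soft argument, not by a Lyapunov functional: $e^{-tK}$ is a $C_0$-semigroup on $L^2(\mathbb{R}^{2d},f_\infty)$ and maps $L^2$ into $H^1(\mathbb{R}^{2d},f_\infty)$ for $t>0$ (Theorem \ref{th:lin}$\,(ii)$), hence by the interpolation-of-semigroups result cited as \cite{intS} it is also a $C_0$-semigroup on $H^\alpha_x(\mathbb{R}^{2d},f_\infty)$ and on $H^1_v(\mathbb{R}^{2d},f_\infty)$; the standard bound $\|e^{-tK}\|\le Ce^{\omega t}$ for $C_0$-semigroups then gives \eqref{e:H^alpha0} and \eqref{e:H^1_v0} on $[0,t_0]$ with $\mathscr{C}_2,\mathscr{C}_4=Ce^{\omega t_0}$. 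If you want an explicit, quantitative proof of $(ii)$ and $(iv)$ in the spirit of Theorem \ref{th:hypoellip}, you would have to design a functional whose time-dependent weights do not couple a non-degenerate-at-$t=0$ block to the degenerate one through $Q$; the specific matrices $\tilde P$ and $\hat P$ you propose cannot do this, so as written the proofs of $(ii)$ and $(iv)$ are incomplete.
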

\begin{proof}
$(i)$ Lemma \ref{lemma ||h||^2} shows that $$||e^{-t K }h_0||\leq ||h_0||, \,\,\, \, \, t\geq 0.$$ The inequality $||e^{-t K }h_0||_{L^2(\mathbb{R}^{2d},f_{\infty})}\leq ||e^{-t K }h_0||$ and \eqref{gr.psi in L^s} with $p=\frac{2d}{d-2}$ yield
   \begin{equation}\label{L^2110}||e^{-t K }h_0||_{L^2(\mathbb{R}^{2d},f_{\infty})}\leq \sqrt{1+\theta^2_1} ||h_0||_{L^2(\mathbb{R}^{2d},f_{\infty})}, \,\,\, \, \, t\geq 0.
   \end{equation}
   Thus, \eqref{e:L^20} holds with the constant $\mathscr{C}_1\colonequals \sqrt{1+\theta^2_1}.$

   $(ii)$  Since $e^{-tK}$ generates a $C_0$ semigroup on $ L^{2}(\mathbb{R}^{2d}, f_{\infty})$  and $e^{-t K }h_0\in H^1(\mathbb{R}^{2d},\\ f_{\infty})$ for all $t>0,$ (see Theorem \ref{th:lin}$\,(ii)$), we conclude  $e^{-tK}$ also generates a $C_0$ semigroup on $ H^{\alpha}_x(\mathbb{R}^{2d}, f_{\infty})$ (see \cite[Theorem 0.1]{intS}).
 By the semigroup property \cite[Theorem 1.2.2]{Pazy} there exist constants $\omega\geq 0$ and $C\geq 1$ such that
$$||e^{-tK}h_0||_{H^{\alpha}_x(\mathbb{R}^{2d}, f_{\infty})}\leq C e^{\omega t} ||h_0||_{H^{\alpha}_x(\mathbb{R}^{2d}, f_{\infty})}$$
for all $t\geq 0.$ This implies that \eqref{e:H^alpha0} holds with $\mathscr{C}_2\colonequals C e^{\omega t_0}. $

$(iii)$ \eqref{ee:H_x^alpha0} coincides with \eqref{e:L^20} when $\alpha=0.$ We  prove \eqref{ee:H_x^alpha0} when $\alpha=1.$  By Theorem \ref{th:lin}$\,(ii)$ we have, for all $h_0 \in \mathcal{H},$ \begin{equation*}
\int_{\mathbb{R}^{2d}}|\nabla_x e^{-t K }h_0|^2f_{\infty}dxdv\leq \frac{C_1}{t^3} \int_{\mathbb{R}^{2d}}h^2_0f_{\infty}dxdv,  \, \, \, \, \, \, t \in (0,t_0].
\end{equation*}
We add the square of \eqref{L^2110} to this estimate to get
\begin{multline*}
\int_{\mathbb{R}^{2d}}|e^{-t K }h_0|^2f_{\infty}dxdv+\int_{\mathbb{R}^{2d}}|\nabla_x e^{-t K }h_0|^2f_{\infty}dxdv\\
\leq \left(1+\theta^2_1 + \frac{C_1}{t^3}\right) \int_{\mathbb{R}^{2d}}h^2_0f_{\infty}dxdv\\
\leq \max\{1+\theta_1^2, C_1\}(1+t^{-\frac{3}{2}})^2\int_{\mathbb{R}^{2d}}h^2_0f_{\infty}dxdv,  \, \, \, \, \, \,\forall t \in (0,t_0].
\end{multline*}
Then by \eqref{norm H_x^1} $$||e^{-tK}h_0||_{H^1_x(\mathbb{R}^{2d}, f_{\infty})}\leq {\kappa_5}\sqrt{\max\{1+\theta_1^2, C_1\}}(1+t^{-\frac{3}{2}})||h_0||_{L^2(\mathbb{R}^{2d}, f_{\infty})}.$$
 This proves \eqref{ee:H_x^alpha0} when $\alpha=1.$ The complete proof follows by interpolation.

 The proofs of $(iv)$ and $(v)$ follow by similar arguments which we did in $(ii)$ and $(iii).$
\end{proof}

\begin{lemma}\label{lem: e^tK}
Let $d\geq 3,$ the assumption $(A1),\, (A2)$ and $ (A3)$ hold. Let $\alpha \in [0,1],$  $\lambda>0$ be the constant appearing in Theorem \ref{th:lin} $(iii)$
 and $\lambda_1\in (0,\lambda).$ There are positive constants $\mathcal{C}_1,$ $\mathcal{C}_2,$  $\mathcal{C}_3,$ $\mathcal{C}_4$ and $\mathcal{C}_5$ such that
\begin{itemize}
\item[(i)]\begin{equation}\label{e:L^2}
||e^{-t K }h_0||_{L^2(\mathbb{R}^{2d}, f_{\infty})}\leq  \mathcal{C}_1e^{-\lambda t} ||h_0||_{L^2(\mathbb{R}^{2d}, f_{\infty})}, \,  \forall\, t\geq 0, \, \forall\, h_0 \in \mathcal{H}.
\end{equation}
\item[(ii)]
\begin{equation}\label{e:H^alpha}
|| e^{-t K }h_0||_{H^{\alpha }_x(\mathbb{R}^{2d}, f_{\infty})}\leq  \mathcal{C}_2 e^{-\lambda t}
||h_0||_{H^{\alpha }_x(\mathbb{R}^{2d}, f_{\infty})}, \, \forall\, t\geq 0, \,  \forall\, h_0 \in \mathcal{H}\cap H^{\alpha}_x(\mathbb{R}^{2d}, f_{\infty}).
\end{equation}
\item[(iii)]
 \begin{equation}\label{ee:H_x^alpha}
||e^{-t K }h_0||_{H^{\alpha}_x(\mathbb{R}^{2d}, f_{\infty})}\leq  \mathcal{C}_3 (1+t^{-\frac{3\alpha}{2}})e^{-\lambda_1 t}||h_0||_{L^2(\mathbb{R}^{2d}, f_{\infty})},    \,  \forall\, t> 0, \,   \forall\, h_0 \in \mathcal{H}.
\end{equation}
\item[(iv)]
\begin{equation}\label{e:H^1_v}
|| e^{-t K }h_0||_{H^{1 }_v(\mathbb{R}^{2d}, f_{\infty})}\leq  \mathcal{C}_4 e^{-\lambda t}
||h_0||_{H^{1 }_v(\mathbb{R}^{2d}, f_{\infty})}, \,  \forall\, t\geq 0, \,  \forall\, h_0 \in \mathcal{H}\cap H^{1}_v(\mathbb{R}^{2d}, f_{\infty}).
\end{equation}
\item[(v)]
 \begin{equation}\label{ee:H_v^1}
||e^{-t K }h_0||_{H^1_v(\mathbb{R}^{2d}, f_{\infty})}\leq  \mathcal{C}_5(1+ t^{-\frac{1}{2}}) e^{-\lambda_1 t}||h_0||_{L^2(\mathbb{R}^{2d}, f_{\infty})},  \,  \forall\, t> 0, \,   \forall\, h_0 \in \mathcal{H}.
\end{equation}
\end{itemize}
\end{lemma}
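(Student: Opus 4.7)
The proof of all five estimates follows a common template: split the time interval into a short-time regime $(0, t_0]$ governed by Lemma \ref{lem: e^tK0} and a long-time regime $[t_0, \infty)$ governed by the exponential decay of Theorem \ref{th:lin}$(iii)$, and glue the two regimes via the semigroup property. The crucial input that is only available now is that $e^{-tK}$ preserves the mean-zero subspace $\mathcal{H}$, so that Theorem \ref{th:lin}$(iii)$ is applicable at every intermediate time.

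For $(i)$, since the $H^1(\mathbb{R}^{2d}, f_{\infty})$ norm dominates the $L^2(\mathbb{R}^{2d}, f_{\infty})$ norm, Theorem \ref{th:lin}$(iii)$ directly gives $\|e^{-tK}h_0\|_{L^2(\mathbb{R}^{2d}, f_{\infty})}\leq C_3 e^{-\lambda t}\|h_0\|_{L^2(\mathbb{R}^{2d}, f_{\infty})}$ for $t\geq t_0$, while for $t\in[0,t_0]$ Lemma \ref{lem: e^tK0}$(i)$ gives a uniform bound that one inflates by the factor $e^{\lambda t_0}$ to produce the same exponential envelope. Hence $(i)$ holds with $\mathcal{C}_1\colonequals \max\{\mathscr{C}_1 e^{\lambda t_0}, C_3\}$.

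For $(ii)$ and $(iv)$, the difficulty is that Theorem \ref{th:lin}$(iii)$ controls the $H^1(\mathbb{R}^{2d}, f_{\infty})$ norm of the solution only by the $L^2(\mathbb{R}^{2d}, f_{\infty})$ norm of the initial datum, not by the full $H^\alpha_x$ or $H^1_v$ norms. To overcome this I would use the semigroup decomposition $e^{-tK}h_0=e^{-\frac{t_0}{2}K}\bigl(e^{-(t-\frac{t_0}{2})K}h_0\bigr)$ valid for $t\geq t_0$: the inner factor lands in $H^1(\mathbb{R}^{2d}, f_{\infty})$ with an $e^{-\lambda(t-t_0/2)}\|h_0\|_{L^2(\mathbb{R}^{2d}, f_{\infty})}$ bound by Theorem \ref{th:lin}$(iii)$, and then the outer factor $e^{-\frac{t_0}{2}K}$ is bounded on $H^\alpha_x(\mathbb{R}^{2d}, f_{\infty})$ (resp.\ $H^1_v(\mathbb{R}^{2d}, f_{\infty})$) by Lemma \ref{lem: e^tK0}$(ii)$ (resp.\ $(iv)$). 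Since $H^\alpha_x$ and $H^1_v$ both continuously inject into $L^2$, one closes the estimate by bounding $\|h_0\|_{L^2}\leq \|h_0\|_{H^\alpha_x}$, $\|h_0\|_{L^2}\leq \|h_0\|_{H^1_v}$. For $t\in[0,t_0]$ one again uses Lemma \ref{lem: e^tK0}$(ii)$/$(iv)$ and absorbs $e^{\lambda t_0}$ into the constant.

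For $(iii)$ and $(v)$, I would combine the short-time smoothing estimates of Lemma \ref{lem: e^tK0}$(iii)$/$(v)$ with the exponential decay already obtained in $(ii)$/$(iv)$: for $t>t_0$ write $e^{-tK}h_0=e^{-(t-t_0)K}\bigl(e^{-t_0 K}h_0\bigr)$, apply Lemma \ref{lem: e^tK0}$(iii)$/$(v)$ at the fixed time $t_0$ to land $e^{-t_0 K}h_0$ in $H^\alpha_x$/$H^1_v$, and then use $(ii)$/$(iv)$ to propagate with rate $\lambda$ over the remaining interval $[0,t-t_0]$. For $t\in(0,t_0]$ one uses Lemma \ref{lem: e^tK0}$(iii)$/$(v)$ directly. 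The slight loss $\lambda_1<\lambda$ is what allows the two regimes to be combined into the single envelope $(1+t^{-3\alpha/2})e^{-\lambda_1 t}$ (resp.\ $(1+t^{-1/2})e^{-\lambda_1 t}$): the factor $e^{(\lambda-\lambda_1)t_0}$ arising from shifting time by $t_0$, together with the bounded factor $(1+t_0^{-3\alpha/2})$ (resp.\ $(1+t_0^{-1/2})$) from evaluating the smoothing estimate at $t_0$, is absorbed into the final constants $\mathcal{C}_3$ and $\mathcal{C}_5$. The only real work is bookkeeping of constants and verifying the matching of the singular prefactors at $t\searrow 0$; no new analytic input beyond Lemma \ref{lem: e^tK0} and Theorem \ref{th:lin}$(iii)$ is required.
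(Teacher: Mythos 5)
Your proposal is correct and follows the same two-regime strategy as the paper: short-time bounds from Lemma \ref{lem: e^tK0} glued to the long-time decay of Theorem \ref{th:lin}$\,(iii)$, using that $e^{-tK}$ preserves $\mathcal{H}$; part (i) is essentially verbatim the paper's argument. The mechanics differ in (ii)--(v). For (ii) and (iv) the paper does not factor the semigroup at all: for $t\geq t_0$ it bounds $||e^{-tK}h_0||_{H^{\alpha}_x(\mathbb{R}^{2d},f_{\infty})}$ by $\kappa_5\,||e^{-tK}h_0||_{H^{1}(\mathbb{R}^{2d},f_{\infty})}$ via \eqref{norm H_x^1} (and trivially $||\cdot||_{H^{1}_v}\leq||\cdot||_{H^{1}}$), applies \eqref{decay}, bounds $||h_0||_{L^2}$ by the stronger norm, and interpolates in $\alpha$. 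Your factorization $e^{-tK}=e^{-\frac{t_0}{2}K}e^{-(t-\frac{t_0}{2})K}$ also works, but note that feeding the $H^{1}$ bound on the inner factor into the $H^{\alpha}_x$-boundedness of the outer factor requires the embedding $H^{1}(\mathbb{R}^{2d},f_{\infty})\hookrightarrow H^{\alpha}_x(\mathbb{R}^{2d},f_{\infty})$, which is exactly \eqref{norm H_x^1} (this is where (A2) enters) combined with interpolation; so the splitting does not dispense with that ingredient, it only adds a step, and you should cite it explicitly. For (iii) and (v) your composition $e^{-tK}=e^{-(t-t_0)K}e^{-t_0K}$ is in fact cleaner than what the paper writes: there the long-time regime is handled by invoking \eqref{e:H^alpha}, which leaves $||h_0||_{H^{\alpha}_x}$ on the right-hand side even though \eqref{ee:H_x^alpha} is claimed with only $||h_0||_{L^2}$ for all $h_0\in\mathcal{H}$; your route (or, equivalently, applying Theorem \ref{th:lin}$\,(iii)$ together with \eqref{norm H_x^1} directly for $t\geq t_0$) yields the stated $L^2\to H^{\alpha}_x$ bound without that detour. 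One small remark: in your scheme the strict inequality $\lambda_1<\lambda$ is not actually needed, since the shift factor $e^{\lambda t_0}$ can be absorbed into the constant already at rate $\lambda$; its presence only serves to match the statement as given.
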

\begin{proof}
$(i)$ Let $t_0>0.$   
  Theorem \ref{th:lin}$\,(iii)$ implies, for all $h_0 \in \mathcal{H},$
\begin{equation*}\label{e3'}
||e^{-t K }h_0||_{L^2(\mathbb{R}^{2d},f_{\infty})}\leq ||e^{-t K }h_0||_{H^1(\mathbb{R}^{2d},f_{\infty})}\leq C_3 e^{-\lambda t}||h_0||_{L^2(\mathbb{R}^{2d},f_{\infty})}, \, \forall\, t\geq t_0.
\end{equation*}
 We combine this inequality and \eqref{e:L^20} to get
$$||e^{-t K }h_0||_{L^2(\mathbb{R}^{2d},f_{\infty})}\leq \max\{C_3, \mathscr{C}_1\}  e^{-\lambda t} ||h_0||_{L^2(\mathbb{R}^{2d},f_{\infty})}  $$
for all $t\geq 0.$
This inequality implies \eqref{e:L^2} with the constant  $\mathcal{C}_1\colonequals   \max\{C_3, \mathscr{C}_1\}.$


$(ii)$ \eqref{e:H^alpha} coincides with \eqref{e:L^2} when $\alpha=0.$
 Let $\alpha=1.$  \eqref{e:H^alpha0} lets us  write for $t \in [0, t_0]$
$$||e^{-tK}h_0||_{H^{1}_x(\mathbb{R}^{2d}, f_{\infty})}\leq \mathscr{C}_2 e^{\lambda  t_0}  e^{-\lambda t} ||h_0||_{H^{1}_x(\mathbb{R}^{2d}, f_{\infty})}.$$
 For $t\geq t_0,$ we use \eqref{norm H_x^1} and Theorem \ref{th:lin} $(ii)$ to get
 \begin{multline*}||e^{-tK}h_0||_{H^{1}_x(\mathbb{R}^{2d}, f_{\infty})}\leq \kappa_5||e^{-tK}h_0||_{H^{1}(\mathbb{R}^{2d}, f_{\infty})}\\ \leq \kappa_5C_3e^{-\lambda t}||h_0||_{L^{2}(\mathbb{R}^{2d}, f_{\infty})}\leq \kappa_5C_3e^{-\lambda t} ||h_0||_{H^{1}_x(\mathbb{R}^{2d}, f_{\infty})}.
 \end{multline*}
 The combination of these estimates imply \eqref{e:H^alpha} with $\mathcal{C}_2\colonequals \max\{ \mathscr{C}_2 e^{\lambda  t_0}, \kappa_5C_3 \}$  when $\alpha=1.$
 The case of $\alpha\in (0,1)$ follows by interpolation.

$(iii)$ 
 By \eqref{ee:H_x^alpha0} we have \begin{equation}\label{A222}
||e^{-t K }h_0||_{H^{\alpha}_x(\mathbb{R}^{2d}, f_{\infty})}\leq  \mathscr{C}_3 e^{\lambda_1 t_0} e^{-\lambda_1 t}(1+t^{-\frac{3\alpha}{2}})||h_0||_{L^2(\mathbb{R}^{2d}, f_{\infty})},   \,\forall\, t \in (0,t_0].
\end{equation}
For $t\geq t_0,$ we have by \eqref{e:H^alpha}
\begin{equation*}
|| e^{-t K }h_0||_{H^{\alpha }_x(\mathbb{R}^{2d}, f_{\infty})}\leq  \mathcal{C}_2 e^{-\lambda t}
||h_0||_{H^{\alpha }_x(\mathbb{R}^{2d}, f_{\infty})}.
\end{equation*}
 Since  $\lambda>\lambda_1>0,$ there is a constant $C>0$ depending $t_0, \, \lambda_1$ and $\lambda$ such that
$$\mathcal{C}_2 e^{-\lambda t}\leq {C}(1+t^{-\frac{3\alpha}{2}})e^{-\lambda_1 t},  \, \, \, \, \, \, \forall\, t\geq t_0.$$
Thus, we obtain
\begin{equation}\label{A111}
|| e^{-t K }h_0||_{H^{\alpha }_x(\mathbb{R}^{2d}, f_{\infty})}\leq {C}(1+t^{-\frac{3\alpha}{2}})e^{-\lambda_1 t}
||h_0||_{H^{\alpha }_x(\mathbb{R}^{2d}, f_{\infty})},  \, \, \, \, \, \, \forall\, t\geq t_0.
\end{equation}
 \eqref{A222} and \eqref{A111}  show that  \eqref{ee:H_x^alpha} holds with the constant $\mathcal{C}_3\colonequals\max\{C, \mathscr{C}_3 e^{\lambda_1 t_0}\}.  $ 


The proofs of $(iv)$ and $(v)$  follow by similar arguments as we did in $(iii)$ and $(iv).$
\end{proof}

\subsection{Local well-posedness}
In this subsection we prove Theorem \ref{Short time} i.e., the existence of a unique solution to \eqref{INTe} in a (possible short) time interval.   We use $\psi_h$ below to denote the solution of $-\Delta_x \psi=\int_{\mathbb{R}^d}hf_{\infty}dv.$
\begin{proof}[\textbf{Proof of Theorem \ref{Short time}} ] Let $t_0>0$ be a fixed constant as in Lemma \ref{lem: e^tK0}.
For a given $h_0\in H^{\alpha}_x(\mathbb{R}^{6}, f_{\infty})\cap H^{1}_v(\mathbb{R}^{6}, f_{\infty})$ we define a mapping
 $$F: C\left([0, \tau]; H^{\alpha}_x(\mathbb{R}^{6}, f_{\infty})\right)\cap C\left([0, \tau]; H^{1}_v(\mathbb{R}^{6}, f_{\infty}) \right)
 \to  C\left([0, \tau]; H^{\alpha}_x(\mathbb{R}^{6}, f_{\infty})\right)\cap C\left([0, \tau]; H^{1}_v(\mathbb{R}^{6}, f_{\infty}) \right)
 $$
  by
$$F[h]= e^{-t K } h_0+\int_0^t e^{-(t-s) K } \left(\nabla_x \psi_h\cdot \nabla_v h-\frac{\nu}{\sigma}v\cdot\nabla_x \psi_h h\right)  ds, \, \, \, \, t\in [0,\tau],$$
where $\tau\in (0, t_0]$ will be fixed later. We want to show that $F$ has a unique fixed point if $\tau $ is small enough.

 We define
 $$||h||_{\tau,1}\colonequals \sup_{t\in [0, \tau]}\{||h(t)||_{H_x^{\alpha}(\mathbb{R}^6,f_{\infty})}\} ,$$
 $$||h||_{\tau, 2}\colonequals \sup_{t\in [0, \tau]}\{|| h(t)||_{H^1_v(\mathbb{R}^6,f_{\infty})}\}.$$
 We use $$||h||_{\tau}\colonequals\max\{ ||h||_{\tau, 1}, ||h||_{\tau, 2}\} $$ as a norm in $C\left([0, \tau]; H^{\alpha}_x(\mathbb{R}^{6}, f_{\infty})\right)\cap C\left([0, \tau]; H^{1}_v(\mathbb{R}^{6}, f_{\infty}) \right).$
\eqref{gr.psi in L^infty} shows that $\nabla_x \psi_h(t)$ is bounded for all $t\geq 0.$  Hence $$\nabla_x \psi_h(t)\cdot \nabla_v h(t)-\frac{\nu}{\sigma}v\cdot\nabla_x \psi_h(t) h(t)\in L^2(\mathbb{R}^6,f_{\infty})$$ for all  $t\in[0,\tau].$  
 Using the H\"older inequality, \eqref{gr.psi in L^infty} and \eqref{Vil.s lemma2}
 \begin{multline}\label{FR0}
|| \nabla_x \psi_h(t)\cdot \nabla_v h(t)-{\nu}/{\sigma}v\cdot\nabla_x \psi_h(t) h(t)||_{L^2(\mathbb{R}^{6}, f_{\infty})}
\\
 \leq \sqrt{2||\nabla_x \psi_h(t)||^2_{L^{\infty}}\int_{\mathbb{R}^{6}}\left(  |\nabla_v h(t)|^2+{\nu^2}/{\sigma^2}|v|^2| h^2(t)\right)f_{\infty} dxdv}\\
 \leq C_R ||h(t)||_{H_x^{\alpha}(\mathbb{R}^6,f_{\infty})}|| h(t)||_{H^1_v(\mathbb{R}^6,f_{\infty})}
 \end{multline}
 for some constant $C_R>0.$ We  estimate $F[h]$ in $C\left([0, \tau]; H^{\alpha}_x(\mathbb{R}^{6}, f_{\infty})\right):$
 \begin{multline}\label{F10}
||F[h(t)]||_{H^{\alpha}_x(\mathbb{R}^{6}, f_{\infty})}\leq ||e^{-t K } h_0||_{H^{\alpha}_x(\mathbb{R}^{6}, f_{\infty})}\\ +\int_{0}^t ||e^{-(t-s) K } \left(\nabla_x \psi_h\cdot \nabla_v h-\nu/\sigma v\cdot\nabla_x \psi_h h \right) ||_{H^{\alpha}_x(\mathbb{R}^{6}, f_{\infty})} ds.
\end{multline}
\eqref{ee:H_x^alpha0} and \eqref{FR0}  let us estimate
 $$
 \int_{0}^t ||e^{-(t-s) K } \left(\nabla_x \psi_h\cdot \nabla_v h-\nu/\sigma v\cdot\nabla_x \psi_h h\right)||_{H^{\alpha}_x(\mathbb{R}^{6}, f_{\infty})} ds$$
 $$\leq \mathscr{C}_3\int_{0}^t (1+(t-s)^{-\frac{3\alpha}{2}})|| \nabla_x \psi_h(s)\cdot \nabla_v h(s)-\nu/\sigma v\cdot\nabla_x \psi_h(s) h(s)||_{L^2(\mathbb{R}^{6}, f_{\infty})} ds $$
\begin{equation}\label{F30}\leq \mathscr{C}_3C_R\int_{0}^t  (1+(t-s)^{-\frac{3\alpha}{2}})||h(s)||_{H_x^{\alpha}(\mathbb{R}^6,f_{\infty})}||h(s)||_{H_v^{1}(\mathbb{R}^6,f_{\infty})}ds
 \end{equation}
$$\leq \mathscr{C}_3C_R||h||_{\tau,1}||h||_{\tau, 2}\int_{0}^t  (1+(t-s)^{-\frac{3\alpha}{2}}) ds.$$
Then,  \eqref{e:H^alpha0}, \eqref{F10} and \eqref{F30} provide
\begin{equation}\label{||F||_1}
||F[h]||_{\tau, 1}\leq   \mathscr{C}_2|| h_0||_{H^{\alpha}_x(\mathbb{R}^{6}, f_{\infty})}+\mathscr{C}_3C_R||h||_{\tau, 1}||h||_{\tau, 2}\sup_{t\in [0,\tau]}\left\{\int_{0}^t  (1+(t-s)^{-\frac{3\alpha}{2}}) ds\right\}.
\end{equation}
We estimate $F[h]$ in $C\left([0, \tau], H^{1}_v(\mathbb{R}^{6}, f_{\infty}) \right):$
 \begin{multline}\label{FF10}
||F[h(t)]||_{H^{1}_v(\mathbb{R}^{6}, f_{\infty})}\leq ||e^{-t K } h_0||_{H^{1}_v(\mathbb{R}^{6}, f_{\infty})} \\+\int_{0}^t ||e^{-(t-s) K } \left(\nabla_x \psi_h\cdot \nabla_v h- \nu/\sigma v\cdot\nabla_x \psi_h h\right) ||_{H^{1}_v(\mathbb{R}^{6}, f_{\infty})} ds.
\end{multline}
 \eqref{ee:H_v^10} and \eqref{FR0} let us estimate
$$\int_{0}^t ||e^{-(t-s) K } \left(\nabla_x \psi_h(s)\cdot \nabla_v h(s)-\nu/\sigma v\cdot\nabla_x \psi_h(s) h(s)\right) ||_{H^{1}_v(\mathbb{R}^{6}, f_{\infty})} ds$$
$$\leq \mathscr{C}_5\int_{0}^t (1+(t-s)^{-\frac{1}{2}})|| \nabla_x \psi_h(s)\cdot \nabla_v h(s)-\nu/\sigma v\cdot\nabla_x \psi_h(s) h(s)||_{L^2(\mathbb{R}^{6}, f_{\infty})} ds $$
$$\leq \mathscr{C}_5C_R\int_{0}^t  (1+(t-s)^{-\frac{1}{2}})||h(s)||_{H_x^{\alpha}(\mathbb{R}^6,f_{\infty})}||h(s)||_{H_v^{1}(\mathbb{R}^6,f_{\infty})}ds$$
\begin{equation}\label{FF30}
\leq \mathscr{C}_5C_R||h||_{\tau,1}||h||_{\tau, 2}(t+2\sqrt{t}),
\end{equation}
where we used
 $$ \int_{0}^t  (1+(t-s)^{-\frac{1}{2}})ds= t+2\sqrt{t}.$$
\eqref{FF10}, \eqref{FF30} and \eqref{e:H^1_v0} show  $$ ||F[h(t)]||_{H^{1}_v(\mathbb{R}^{6}, f_{\infty})}\leq  \mathscr{C}_4 ||h_0||_{H^{1}_v(\mathbb{R}^{6}, f_{\infty})} +\mathscr{C}_5C_R ||h||_{1}||h||_{2}(t+2\sqrt{t}), $$
and so \begin{equation}\label{||F||_2}
||F[h]||_{\tau, 2}
\leq  \mathscr{C}_4 ||h_0||_{H^{1}_v(\mathbb{R}^{6}, f_{\infty})}+\mathcal{C}_5C_R||h||_{\tau,1}||h||_{\tau,2}(\tau+2\sqrt{\tau}).
\end{equation}

Let $h, g \in  C\left([0, \tau]; H^{\alpha}_x(\mathbb{R}^{6}, f_{\infty})\right)\cap C\left([0, \tau]; H^{1}_v(\mathbb{R}^{6}, f_{\infty}) \right).$  We consider
$$F[h]-F[g]= \int_0^t e^{-(t-s) K } \left(\nabla_x \psi_h\cdot \nabla_v (h-g)-\nu/\sigma v\cdot\nabla_x \psi_h (h-g)\right)  ds$$
$$+\int_0^t e^{-(t-s) K } \left((\nabla_x \psi_h-\nabla_x \psi_g)\cdot \nabla_v g-\nu/\sigma v\cdot (\nabla_x \psi_h-\nabla_x \psi_g) g\right)ds. $$
As we did in \eqref{FR0}, we can show  by using the H\"older inequality, \eqref{gr.psi in L^infty} and \eqref{Vil.s lemma2} that
\begin{multline}\label{wh-wg10}
||\nabla_x \psi_h(t)\cdot \nabla_v (h(t)-g(t))-\nu/\sigma v\cdot\nabla_x \psi_h (t)(h(t)-g(t)) ||_{L^2(\mathbb{R}^6,f_{\infty})}\\ \leq C_R ||h(t)||_{H_x^{\alpha}(\mathbb{R}^6,f_{\infty})}||h(t)-g(t)||_{H_v^{1}(\mathbb{R}^6,f_{\infty})}
\end{multline}
and \begin{multline}\label{wh-wg20}
||(\nabla_x \psi_h(t)-\nabla_x \psi_g(t))\cdot \nabla_v g(t)-\nu/\sigma v\cdot (\nabla_x \psi_h(t)-\nabla_x \psi_g(t)) g(t)||_{L^2(\mathbb{R}^6,f_{\infty})}\\ \leq C_R ||h(t)-g(t)||_{H_x^{\alpha}(\mathbb{R}^6,f_{\infty})}||g(t)||_{H_v^{1}(\mathbb{R}^6,f_{\infty})}
\end{multline}
hold for all $t\in[0,\tau].$
Using  \eqref{ee:H_v^10}, \eqref{wh-wg10} and  \eqref{wh-wg20} we estimate $F[h]-F[g]$ in\\ $ C\left([0, \tau]; H_v^{1}(\mathbb{R}^6,f_{\infty})\right):$
\begin{multline*}
||F[h(t)]-F[g(t)]||_{H_v^{1}(\mathbb{R}^6,f_{\infty})}\\  \leq \int_0^t ||e^{-(t-s) K } \left(\nabla_x \psi_h\cdot \nabla_v (h-g)-\nu/\sigma v\cdot\nabla_x \psi_h (h-g)\right) ||_{H_v^{1}(\mathbb{R}^6,f_{\infty})} ds
\end{multline*}$$+\int_0^t ||e^{-(t-s) K } \left((\nabla_x \psi_h-\nabla_x \psi_g)\cdot \nabla_v g-\nu/\sigma v\cdot (\nabla_x \psi_h-\nabla_x \psi_g) g\right)||_{H_v^{1}(\mathbb{R}^6,f_{\infty})}ds $$
$$\leq \mathscr{C}_5C_R\int_{0}^t  (1+(t-s)^{-\frac{1}{2}})||h(s)||_{H_x^{\alpha}(\mathbb{R}^6,f_{\infty})}||h(s)-g(s)||_{H_v^{1}(\mathbb{R}^6,f_{\infty})}ds$$
$$\quad+ \mathscr{C}_5C_R\int_{0}^t  (1+(t-s)^{-\frac{1}{2}})||h(s)-g(s)||_{H_x^{\alpha}(\mathbb{R}^6,f_{\infty})}||g(s)||_{H_v^{1}(\mathbb{R}^6,f_{\infty})}ds.
$$
This shows that
\begin{multline*}
||F[h(t)]-F[g(t)]||_{H_v^{1}(\mathbb{R}^6,f_{\infty})}
\leq \mathscr{C}_5C_R||h||_{\tau,1}||h-g||_{\tau, 2}\int_{0}^t  (1+(t-s)^{-\frac{1}{2}}) ds\\
+ \mathscr{C}_5C_R ||h-g||_{\tau,1} ||g||_{\tau, 2}\int_{0}^t  (1+(t-s)^{-\frac{1}{2}})ds.
\end{multline*}
Therefore, we get \begin{multline}\label{||F-F||_2}
||F[h(t)]-F[g(t)]||_{\tau,2}
\leq \mathscr{C}_5C_R||h||_{\tau, 1}||h-g||_{\tau, 2}(\tau+2\sqrt{\tau})
\\+ \mathscr{C}_5C_R ||h-g||_{\tau, 1} ||g||_{\tau, 2}(\tau+2\sqrt{\tau}).
\end{multline}
Similarly, we estimate $F[h]-F[g]$ in $C\left([0,\tau]; H_x^{\alpha}(\mathbb{R}^6,f_{\infty})\right)$ using  \eqref{ee:H_x^alpha0}, \eqref{wh-wg10} and  \eqref{wh-wg20}:
\begin{align*}
&||F[h(t)]-F[g(t)]||_{H_x^{\alpha}(\mathbb{R}^6,f_{\infty})}\\  \leq &\int_0^t ||e^{-(t-s) K } \left(\nabla_x \psi_h\cdot \nabla_v (h-g)-\nu/\sigma v\cdot\nabla_x \psi_h (h-g)\right) ||_{H_x^{\alpha}(\mathbb{R}^6,f_{\infty})} ds\\&+\int_0^t ||e^{-(t-s) K } \left((\nabla_x \psi_h-\nabla_x \psi_g)\cdot \nabla_v g-\nu/\sigma v\cdot (\nabla_x \psi_h-\nabla_x \psi_g) g\right)||_{H_x^{\alpha}(\mathbb{R}^6,f_{\infty})}ds \\
\leq& \mathscr{C}_3C_R\int_{0}^t  (1+(t-s)^{-\frac{3\alpha}{2}})||h(s)||_{H_x^{\alpha}(\mathbb{R}^6,f_{\infty})}||h(s)-g(s)||_{H_v^{1}(\mathbb{R}^6,f_{\infty})}ds\\
&+ \mathscr{C}_3C_R\int_{0}^t  (1+(t-s)^{-\frac{3\alpha}{2}})||h(s)-g(s)||_{H_x^{\alpha}(\mathbb{R}^6,f_{\infty})}||g(s)||_{H_v^{1}(\mathbb{R}^6,f_{\infty})}ds.
\end{align*}
This shows that
\begin{multline*}
||F[h(t)]-F[g(t)]||_{H_x^{\alpha}(\mathbb{R}^6,f_{\infty})}
\leq \mathscr{C}_3C_R||h||_{\tau,1} ||h-g||_{\tau, 2}\int_{0}^t  (1+(t-s)^{-\frac{3\alpha}{2}}) ds\\
+ \mathscr{C}_3C_R ||h-g||_{\tau,1} ||g||_{\tau, 2}\int_{0}^t  (1+(t-s)^{-\frac{3\alpha}{2}})ds.
\end{multline*}
We take  the supremum in time
\begin{multline}\label{||F-F||_1}
||F[h]-F[g]||_{\tau, 1}
\leq \mathscr{C}_3C_R  ||h||_{\tau,1} ||h-g||_{\tau, 2}\sup_{t\in [0, \tau]}\left\{\int_{0}^t  (1+(t-s)^{-\frac{3\alpha}{2}}) ds\right\}
\\+ \mathscr{C}_3C_R ||h-g||_{\tau, 1} ||g||_{\tau, 2}\sup_{t\in [0, \tau]}\left\{\int_{0}^t  (1+(t-s)^{-\frac{3\alpha}{2}}) ds\right\}.
\end{multline}
Let $r \colonequals 2 \max\{ \mathscr{C}_2|| h_0||_{H^{\alpha}_x(\mathbb{R}^{6}, f_{\infty})}, \mathscr{C}_4 ||h_0||_{H^{1}_v(\mathbb{R}^{6}, f_{\infty})}\}.$
We choose a small $\tau\in(0,t_0]$ such that
$$\frac{r}{2}+\mathscr{C}_3C_Rr^2\sup_{t\in [0,\tau]}\left\{{\int_{0}^t  (1+(t-s)^{-\frac{3\alpha}{2}})  ds}\right\}\leq r, \, \, \, \, \, \, \, \, \frac{r}{2}+\mathcal{C}_5C_Rr^2(\tau+2\sqrt{\tau})\leq r.$$
Then, \eqref{||F||_1} and \eqref{||F||_2} show that
$$||F[h]||_{\tau}\leq r\, \, \, \, \text{for} \, \, \, \, ||h||_{\tau}\leq r.$$
We choose a   smaller $\tau\in(0,t_0]$ such that
$$2\mathscr{C}_5C_R r(\tau+2\sqrt{\tau})
<\frac{1}{2}, \, \, \, \, \, \, 2\mathscr{C}_3C_R  r \sup_{t\in [0, \tau]}\left\{\int_{0}^t  (1+(t-s)^{-\frac{3\alpha}{2}}) ds\right\}
<\frac{1}{2},$$
then \eqref{||F-F||_2} and \eqref{||F-F||_1} show
$$||F[h]-F[g]||_{\tau}\leq \frac{1}{2}||h-g||_{\tau}\, \, \, \, \text{for} \, \, \, \, ||h||_{\tau}\leq r, \, \, ||g||_{\tau}\leq r.$$
By the well known
contraction principle $F$ has a unique fixed point in\\ $C\left([0, \tau], H^{\alpha}_x(\mathbb{R}^{6}, f_{\infty})\right)\cap C\left([0, \tau], H^{1}_v(\mathbb{R}^{6}, f_{\infty}) \right).$  This
fixed point is the desired solution of the integral equation
$$ h(t)=e^{-t K } h_0+\int_0^t e^{-(t-s) K } \left(\nabla_x \psi_h\cdot \nabla_v h-v\cdot\nabla_x \psi_h h\right)  ds, \, \, \, \, t\in [0,\tau].$$

We note that $\tau$ depends on $|| h_0||_{H^{\alpha}_x(\mathbb{R}^{6}, f_{\infty})}$ and $ ||h_0||_{H^{1}_v(\mathbb{R}^{6}, f_{\infty})}.$ From what we have just proved it follows that if $h$ is a mild solution on the interval $[0,\tau]$ it can be extended on the interval $[0, \tau+\tau_1]$ with $\tau_1 \in(0,t_0]$ by defining on $[\tau, \tau+\tau_1],$ $h(t)=h_1(t)$ where $h_1(t)$ is the solution of $$h_1(t)=e^{-(t-\tau)K}h(\tau)+\int_{\tau}^t e^{-(t-s)K} \left(\nabla_x \psi_{h_1}(s)\cdot \nabla_v h_1(s)-v\cdot\nabla_x \psi_{h_1}(s) h_1(s)\right)  ds.$$
Moreover, $\tau_1 \in(0,t_0]$ depends on $|| h(\tau)||_{H^{\alpha}_x(\mathbb{R}^{6}, f_{\infty})}$ and $ ||h(\tau)||_{H^{1}_v(\mathbb{R}^{6}, f_{\infty})}.$

Let $[0, t_{max})$ be the maximal interval of  existence of the solution. If $t_{max}<\infty$ then at least one of the limits
$$\lim_{t\nearrow t_{max}}||h(t)||_{ H^{\alpha}_x(\mathbb{R}^{6},f_{\infty})}  \, \, \, \,\text{ and } \, \, \, \, \, \, \,  \lim_{t\nearrow t_{max}}||h(t)||_{ H_v^1(\mathbb{R}^{6},f_{\infty})}$$
is infinite. Otherwise there is a sequence $t_n\nearrow t_{max},$ $n\in \mathbb{N},$ such that $||h(t_n)||_{ H^{\alpha}_x(\mathbb{R}^{6},f_{\infty})}$ and $||h(t_n)||_{ H^{1}_v(\mathbb{R}^{6},f_{\infty})}$ is bounded. This would imply what we have just proved that for each $t_n,$ near enough to $t_{max},$ the solution $h$ on the interval $[0,t_n]$ can be extended to the interval $[0, t_n+\delta],$ where $\delta \in(0,t_0]$ is independent of $t_n$ and  hence $h$ can be extended beyond $t_{max}. $ This contradicts the definition of $t_{max}.$

Then \eqref{gr.psi in Lalpha} completes the proof.
\end{proof}
\subsection{Global well-posedness and exponential stability}
Let $\lambda>0$ be the constant appearing in  Theorem \ref{th:lin}$\, (iii),$ $\lambda_1 \in (0,\lambda)$ and $\alpha\in [0,1].$   We define
$$X \colonequals\left\{h\in C\left([0, \infty); H^{\alpha}_x(\mathbb{R}^{6}, f_{\infty}) \right): \, \,\sup_{t\geq 0}\left\{e^{\lambda_1 t}||h(t)||_{ H^{\alpha}_x(\mathbb{R}^{6}, f_{\infty})}\right\}<\infty
\right\},
$$
$$Y\colonequals \left\{h\in C\left([0, \infty); H^{1}_v(\mathbb{R}^{6}, f_{\infty})\right): \, \, \sup_{t\geq 0}\left\{e^{\lambda_1 t}||h(t)||_{ H^{1}_v(\mathbb{R}^{6}, f_{\infty})}\right\}<\infty\right\}$$
with the norms
$$||h||_X\colonequals \sup_{t\geq 0}\left\{e^{\lambda_1 t}||h(t)||_{ H^{\alpha}_x(\mathbb{R}^{6}, f_{\infty})}\right\},$$
$$||h||_Y\colonequals \sup_{t\geq 0}\left\{e^{\lambda_1 t}||h(t)||_{ H^{1}_v(\mathbb{R}^{6}, f_{\infty})}\right\}.$$
 We denote $$I_1\colonequals \sup_{t\geq 0}\left\{\int_{0}^t  (1+(t-s)^{-\frac{3\alpha}{2}})e^{-\lambda_1 s}  ds\right\}, \,  I_2\colonequals \sup_{t\geq 0}\left\{\int_{0}^t  (1+(t-s)^{-\frac{1}{2}})e^{-\lambda_1 s}  ds\right\}.$$ Here $I_2$ is finite, but $I_1$ is finite if $\alpha \in (0,\frac{2}{3}).$

{ We need the following Gr\"onwall type inequality.
\begin{lemma}\label{lemma:Gron}
Let $a$ and $ b$ be positive constants, and $y:[0,\infty)\to [0, \infty)$ be a continuous function  satisfying
\begin{equation*}\label{dif.in}
y(t)\leq a +b \int_{0}^t (1+(t-s)^{-\frac{1}{2}})e^{-\lambda_1 s}y(s)ds, \, \, \, \,\forall\, t\in [0,\infty).
\end{equation*}
  Then there is a positive constant $\Lambda$ depending only on $\lambda_1$ such that
 \begin{equation}\label{adv.Gron}
 y(t)
\leq (a + ab I_2)e^{ b^2\Lambda}, \, \, \, \,\forall\, t\in [0,\infty).
 \end{equation}
\end{lemma}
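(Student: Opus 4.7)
The plan is to apply a classical Volterra-iteration scheme: iterate the integral inequality once to extract the prefactor $a+abI_2$, then use the Beta-function identity and the $e^{-\lambda_1 s}$ weight to show that the iterated kernels decay factorially, so the resulting Neumann series is bounded by a Mittag-Leffler-type function that grows only like $e^{b^2\Lambda}$.

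First, I would set $K(t,s):=b(1+(t-s)^{-1/2})e^{-\lambda_1 s}$ and rewrite the hypothesis as $y(t)\leq a+(Ty)(t)$ with $(Tf)(t):=\int_0^t K(t,s)f(s)\,ds$. Substituting the inequality into itself gives
\[
y(t)\leq a+a\int_0^t K(t,s)\,ds+\int_0^t K^{(2)}(t,r)\,y(r)\,dr,
\]
where $K^{(2)}(t,r):=\int_r^t K(t,s)K(s,r)\,ds$. By the very definition of $I_2$, the middle term is bounded by $abI_2$, which produces exactly the prefactor $a+abI_2$ appearing in the target estimate \eqref{adv.Gron}.

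Second, I would show that the iterated kernels regularize. By Fubini,
\[
\int_0^t K^{(n)}(t,r)\,dr=\int_0^t K(t,s)\int_0^s K^{(n-1)}(s,r)\,dr\,ds,
\]
and expanding the product $(1+(t-s)^{-1/2})(1+(s-r)^{-1/2})$ together with the Beta identity
\[
\int_r^t (t-s)^{-1/2}(s-r)^{-1/2}\,ds=B(1/2,1/2)=\pi
\]
and the weighted estimates $\int_0^t (t-s)^{-1/2}e^{-\lambda_1 s}ds\leq C(\lambda_1)$ (split the integral at $s=t-1$ to handle the singularity), I would derive inductively a bound of the form $\int_0^t K^{(n)}(t,r)\,dr\leq b^n\Lambda_0^{n}/\Gamma(n/2+1)$, for a constant $\Lambda_0$ depending only on $\lambda_1$. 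The factorial decay $1/\Gamma(n/2+1)$ is the same mechanism that produces the Mittag-Leffler function $E_{1/2}$ in Henry's singular-Gronwall lemma; since $E_{1/2}(x)\sim e^{x^2}$ as $x\to\infty$, summing the Neumann series gives
\[
y(t)\leq (a+abI_2)\sum_{n=0}^{\infty}\frac{(b\sqrt{\Lambda_0})^{2n}}{\Gamma(n+1)}\leq (a+abI_2)\,e^{b^2\Lambda}
\]
with $\Lambda=\Lambda(\lambda_1)$. The continuity of $y$ (hence its local boundedness) ensures that the remainder $T^{n+1}y$ vanishes in the limit on any compact interval.

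The main obstacle is step two: controlling the iterated kernels so as to obtain the factorial decay uniformly in $t$. Without the exponential weight, the kernel $(1+(t-s)^{-1/2})$ is not in $L^1$ at infinity and iteration would give bounds growing polynomially in $t$; the weight $e^{-\lambda_1 s}$ is what keeps the total mass $\int_0^t K^{(n)}(t,r)\,dr$ independent of $t$. Balancing the regular "1" part of the kernel (which on its own gives only an exponential bound via classical Gronwall) against the singular $(t-s)^{-1/2}$ part (which gives the factorial gain via the Beta identity) inside a single induction is the technical core of the argument.
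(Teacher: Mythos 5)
Your overall strategy is workable but it is a genuinely different, and much heavier, route than the paper's. You and the paper share the first step: substitute the inequality into itself once, use Fubini, and control the composed kernel with exactly the ingredients you name (the Beta identity $\int_\tau^t (t-s)^{-1/2}(s-\tau)^{-1/2}\,ds=\pi$ and splitting arguments for $\int_0^{t-\tau}s^{-1/2}e^{-\lambda_1 s}\,ds$ and $\int_0^{t-\tau}(t-\tau-s)^{-1/2}e^{-\lambda_1 s}\,ds$). But at that point the paper simply observes that the composed kernel is nonsingular and bounded by a constant $C(\lambda_1)$ times $e^{-2\lambda_1\tau}$, whose integral over $[0,\infty)$ is finite, so the classical Gr\"onwall lemma applied to $y(t)\le a+abI_2+b^2C(\lambda_1)\int_0^t e^{-2\lambda_1\tau}y(\tau)\,d\tau$ finishes the proof at once, with the explicit constant $\Lambda=\tfrac{1}{2\lambda_1}\bigl(\tfrac{e^{-\lambda_1}+2}{\lambda_1}+\pi+4\bigr)$. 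In other words, a single iteration already regularizes the kernel; the full Henry-type Neumann series with Mittag-Leffler asymptotics that you propose is not needed, and choosing it turns the easy part of the problem into the hard part.

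Two places in your plan are genuine soft spots if you carry it out. First, the uniform-in-$t$ factorial bound $\int_0^t K^{(n)}(t,r)\,dr\le b^n\Lambda_0^n/\Gamma(n/2+1)$ is asserted, not proved, and it is the entire content of your argument: one must track simultaneously the Beta-function gains from the $(t-s)^{-1/2}$ factors and the exponential weights sitting at the left endpoints $r<s_1<\dots<s_{n-1}$ (passing to gap variables, each gap except the last is damped at rate at least $\lambda_1$, and the undamped last singular factor must be paired with its neighbour through the Beta identity). This can be done, but it is a nontrivial induction, not a remark. Second, the concluding display is inconsistent with your own kernel bounds: summing $b^n\Lambda_0^n/\Gamma(n/2+1)$ gives the Mittag-Leffler value $E_{1/2}(b\Lambda_0)\le 2e^{b^2\Lambda_0^2}$, not $\sum_n (b\sqrt{\Lambda_0})^{2n}/\Gamma(n+1)$; moreover the target $(a+abI_2)e^{b^2\Lambda}$ with $\Lambda$ depending only on $\lambda_1$ admits no stray multiplicative constant uniformly in small $b$, and the prefactor $(a+abI_2)$ does not simply factor out of the Neumann series. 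Both issues are repairable, because the offending constant multiplies only the $n\ge 2$ tail, which carries a factor $b^2$ and can be absorbed (e.g.\ via $1+xe^{x}\le e^{2x}$), but this bookkeeping has to be written out, whereas the paper's one-iteration-plus-classical-Gr\"onwall argument avoids it entirely.
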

\begin{proof}
We observe
\begin{align*}
y(t)\leq& a +b \int_{0}^t (1+(t-s)^{-\frac{1}{2}})e^{-\lambda_1 s}y(s)ds \\
\leq &a + ab \int_{0}^t (1+(t-s)^{-\frac{1}{2}})e^{-\lambda_1 s}ds\\&+b^2 \int_{0}^t (1+(t-s)^{-\frac{1}{2}})e^{-\lambda_1 s}\left[\int_{0}^s (1+(s-\tau)^{-\frac{1}{2}})e^{-\lambda_1 \tau}y(\tau)d\tau\right]ds.
\end{align*}
 By Fubini's theorem for computing multiple integrals, we obtain
 \begin{align}\label{y< a}
y(t)
\leq& a + ab \int_{0}^t (1+(t-s)^{-\frac{1}{2}})e^{-\lambda_1 s}ds\\\notag
&+b^2 \int_{0}^t  e^{-2\lambda_1 \tau}y(\tau)\left[\int_{\tau}^t(1+(t-s)^{-\frac{1}{2}})(1+(s-\tau)^{-\frac{1}{2}})e^{-\lambda_1(s-\tau)} ds \right]d\tau.
\end{align}
The integral in the brackets can be written as
 \begin{multline}\label{int mu}
\int_{\tau}^t(1+(t-s)^{-\frac{1}{2}})(1+(s-\tau)^{-\frac{1}{2}})e^{-\lambda_1 (s-\tau)} ds \\
=
\int_{0}^{t-\tau}(1+(t-\tau-s)^{-\frac{1}{2}})(1+s^{-\frac{1}{2}})e^{-\lambda_1 s} ds\\
=\int_{0}^{t-\tau}(1+(t-\tau-s)^{-\frac{1}{2}}+s^{-\frac{1}{2}}+(t-\tau-s)^{-\frac{1}{2}}s^{-\frac{1}{2}})e^{-\lambda_1 s} ds.
\end{multline}
We show this integral is bounded by a constant depending only on $\lambda_1.$ We first compute
$$\int_{0}^{t-\tau}e^{-\lambda_1 s} ds=\frac{1}{\lambda_1}(1-e^{-\lambda_1(t-\tau)})\leq \frac{1}{\lambda_1}$$
and
\begin{multline*}\int_{0}^{t-\tau}(t-\tau-s)^{-\frac{1}{2}}s^{-\frac{1}{2}}e^{-\lambda_1 s} ds\leq \int_{0}^{t-\tau}(t-\tau-s)^{-\frac{1}{2}}s^{-\frac{1}{2}} ds\\=\arcsin{\frac{2s-(t-\tau)}{t-\tau}}\big|^{t-\tau}_0=\pi.
\end{multline*}
If $t-\tau\leq 1,$  then
\begin{equation}\label{in<1}\int_{0}^{t-\tau} s^{-\frac{1}{2}}e^{-\lambda_1 s} ds\leq  \int_{0}^{t-\tau} s^{-\frac{1}{2}} ds=2(t-\tau)^{\frac{1}{2}}\leq 2.
\end{equation}
If $t-\tau>1,$ then
\begin{equation}\label{in0-1}\int_{0}^{1}s^{-\frac{1}{2}}e^{-\lambda_1 s}ds\leq  \int_{0}^{1}s^{-\frac{1}{2}} ds=2
\end{equation}
and
\begin{equation}\label{in1-t}\int_{1}^{t-\tau}s^{-\frac{1}{2}} e^{-\lambda_1 s} ds
\leq \int_{1}^{t-\tau} e^{-\lambda_1 s} ds=\frac{e^{-\lambda_1}(1-e^{-\lambda_1 (t-\tau -1)})}{\lambda_1} \leq \frac{e^{-\lambda_1}}{\lambda_1}.
\end{equation}
\eqref{in<1}, \eqref{in0-1}, and \eqref{in1-t} show
$$\int_{0}^{t-\tau} s^{-\frac{1}{2}}e^{-\lambda_1 s} ds\leq 2+\frac{e^{-\lambda_1}}{\lambda_1}, \, \, \, \, \, \, \,  \forall\, t-\tau\geq 0.$$
Similarly, if $t-\tau\leq 1,$  then
\begin{equation}\label{in<11}\int_{0}^{t-\tau} (t-\tau-s)^{-\frac{1}{2}}e^{-\lambda_1 s} ds\leq  \int_{0}^{t-\tau} (t-\tau-s)^{-\frac{1}{2}} ds=2(t-\tau)^{\frac{1}{2}}\leq 2.
\end{equation}
If $t-\tau>1,$ then
\begin{equation}\label{in0-11}\int_{0}^{t-\tau-1}(t-\tau-s)^{-\frac{1}{2}}e^{-\lambda_1 s}ds\leq  \int_{0}^{t-\tau-1} e^{-\lambda_1 s}ds\leq \frac{1}{\lambda_1}
\end{equation}
and
\begin{equation}\label{in1-t1}
\int_{t-\tau-1}^{t-\tau}(t-\tau-s)^{-\frac{1}{2}}e^{-\lambda_1 s} ds
\leq \int_{t-\tau-1}^{t-\tau}(t-\tau-s)^{-\frac{1}{2}}ds= 2.
\end{equation}
\eqref{in<1}, \eqref{in0-1}, and \eqref{in1-t} show
$$\int_{0}^{t-\tau} (t-\tau-s)^{-\frac{1}{2}}e^{-\lambda_1 s} ds\leq 2+\frac{1}{\lambda_1}, \, \, \, \, \, \, \,  \forall\, t-\tau\geq 0.$$
The estimates above shows that the integral in \eqref{int mu} is bounded by  $\frac{e^{-\lambda_1}+2}{\lambda_1}+\pi+4.$ Then we get from \eqref{y< a}
 $$
y(t)
\leq a + ab I_2
+b^2\left(\frac{e^{-\lambda_1}+2}{\lambda_1}+\pi+4\right)  \int_{0}^t  e^{-2\lambda_1 \tau}y(\tau)d\tau.
$$
The Gr\"onwall inequality yields
 $$
y(t)
\leq (a + ab I_2)e^{ b^2(\frac{e^{-\lambda_1}+2}{\lambda_1}+\pi+4) \int_{0}^t  e^{-2\lambda_1 \tau}d\tau}\leq (a + ab I_2)e^{ \frac{b^2}{2\lambda_1}(\frac{e^{-\lambda_1}+2}{\lambda_1}+\pi+4)}.
$$
This proves \eqref{adv.Gron} with the constant $\Lambda\colonequals \frac{1}{2\lambda_1}(\frac{e^{-\lambda_1}+2}{\lambda_1}+\pi+4).$
\end{proof}}
\begin{lemma}\label{Lem: Fix} Let $h_0\in H^{\alpha}_x(\mathbb{R}^{6}, f_{\infty})\cap H^{1}_{v}(\mathbb{R}^{6}, f_{\infty}),$ the assumptions $(A1),$ $(A2)$ and $(A3)$ hold.  Then, for any  $h \in X,$ there is a unique $w\in X\cap Y$ satisfying
\begin{equation}\label{u} w(t)= e^{-t K } h_0+\int_0^t e^{-(t-s) K } \left(\nabla_x \psi_h(s)\cdot \nabla_v w(s)-v\cdot\nabla_x \psi_h(s) w(s)\right)  ds, \, \, \, \, \forall t\geq 0.
\end{equation}
\end{lemma}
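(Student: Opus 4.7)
The plan is to view (u) as a linear integral equation in $w$ with a given coefficient $h\in X$, and to treat existence and uniqueness separately using the semigroup estimates of Lemma \ref{lem: e^tK} together with the Gr\"onwall-type Lemma \ref{lemma:Gron}. The workhorse estimate, already used in the proof of Theorem \ref{Short time} (see \eqref{FR0}), is
\[
\bigl\|\nabla_x\psi_h(s)\cdot\nabla_v w(s)-(\nu/\sigma)v\cdot\nabla_x\psi_h(s)\,w(s)\bigr\|_{L^2(\mathbb{R}^6,f_\infty)}\leq C_R\,\|h(s)\|_{H^\alpha_x}\|w(s)\|_{H^1_v},
\]
which combined with $\|h(s)\|_{H^\alpha_x}\leq e^{-\lambda_1 s}\|h\|_X$ reduces the forcing term in (u) to an integrand compatible with the smoothing estimates \eqref{ee:H_x^alpha} and \eqref{ee:H_v^1}.

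For uniqueness, if $w_1,w_2\in X\cap Y$ both satisfy (u), then $u\colonequals w_1-w_2$ obeys the homogeneous version of (u). Applying \eqref{ee:H_v^1} together with the estimate above, and multiplying by $e^{\lambda_1 t}$, one obtains
\[
e^{\lambda_1 t}\|u(t)\|_{H^1_v}\leq \mathcal{C}_5 C_R\|h\|_X\int_0^t\bigl(1+(t-s)^{-1/2}\bigr)e^{-\lambda_1 s}\bigl(e^{\lambda_1 s}\|u(s)\|_{H^1_v}\bigr)\,ds.
\]
Lemma \ref{lemma:Gron} applied with $a=0$ then forces $u\equiv 0$.

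For existence, I will first construct a local solution on a short interval $[0,\tau]$ by a Banach contraction argument in $C([0,\tau];H^\alpha_x(\mathbb{R}^6,f_\infty))\cap C([0,\tau];H^1_v(\mathbb{R}^6,f_\infty))$, mimicking the proof of Theorem \ref{Short time} applied to the map $w\mapsto e^{-tK}h_0+\int_0^t e^{-(t-s)K}(\nabla_x\psi_h\cdot\nabla_v w-(\nu/\sigma)v\cdot\nabla_x\psi_h w)\,ds$, which is now linear in $w$; the length $\tau>0$ depends only on $\|h\|_X$, $\|h_0\|_{H^\alpha_x}$ and $\|h_0\|_{H^1_v}$. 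Applying the same semigroup + Gr\"onwall computation to $w$ itself yields the a priori bound
\[
e^{\lambda_1 t}\|w(t)\|_{H^1_v}\leq \bigl(\mathcal{C}_4\|h_0\|_{H^1_v}+\mathcal{C}_4\mathcal{C}_5 C_R\|h\|_X I_2\|h_0\|_{H^1_v}\bigr)\exp\bigl((\mathcal{C}_5 C_R\|h\|_X)^2\Lambda\bigr),
\]
and a parallel bound in $H^\alpha_x$ via \eqref{ee:H_x^alpha} and the finiteness of $I_1$ (which is precisely where the hypothesis $\alpha<2/3$ enters). These uniform estimates rule out blow-up in the $H^\alpha_x\cap H^1_v$ norm, so the local solution extends to all of $[0,\infty)$ and by construction belongs to $X\cap Y$.

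The main technical obstacle is the interplay between the singular smoothing kernels $(t-s)^{-3\alpha/2}$ and $(t-s)^{-1/2}$ and the exponential-weight framework: integrability near $s=t$ of the first kernel against $e^{-\lambda_1 s}$ is what forces $\alpha<2/3$, while the $(t-s)^{-1/2}$ term necessitates the sharp weighted Gr\"onwall statement of Lemma \ref{lemma:Gron} rather than a classical one. A subtler point is that membership $w\in X$ already forces $\int_{\mathbb{R}^6} w(t)f_\infty\,dxdv=0$ (by exponential decay in $H^\alpha_x$), which in turn imposes $\int_{\mathbb{R}^6} h_0\,f_\infty\,dxdv=0$ and is what legitimizes the use of the exponentially decaying estimates of Lemma \ref{lem: e^tK} in place of the merely bounded short-time estimates of Lemma \ref{lem: e^tK0}.
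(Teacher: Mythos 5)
Your proposal is correct and follows essentially the same strategy as the paper: a local contraction for the map $w\mapsto e^{-tK}h_0+\int_0^t e^{-(t-s)K}(\nabla_x\psi_h\cdot\nabla_v w-(\nu/\sigma)v\cdot\nabla_x\psi_h w)\,ds$, global extension, and then the weighted Gr\"onwall Lemma \ref{lemma:Gron} combined with the smoothing estimates \eqref{ee:H_v^1} and \eqref{ee:H_x^alpha} to produce the a priori bounds proving $w\in Y$ and then $w\in X$. Two small differences are worth flagging. First, the paper runs the contraction in $C([0,\tau];H^1_v(\mathbb{R}^6,f_\infty))$ \emph{alone}: since the estimate \eqref{FR} bounds the forcing term in $L^2(f_\infty)$ by $\|h\|_{H^\alpha_x}\|w\|_{H^1_v}$ with no $\|w\|_{H^\alpha_x}$, the fixed point naturally lives in $H^1_v$, and the $H^\alpha_x$ regularity is recovered afterwards via \eqref{ee:H_x^alpha}; your choice to contract in the intersection space is fine but carries some redundancy. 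Second, because the map is \emph{linear} in $w$, the contraction constant depends only on $\|h\|_X$ and $\tau$, never on $h_0$ -- this is the observation the paper exploits to iterate with a fixed step length $\tau$ and obtain global existence immediately, without appealing to a priori bounds for the continuation step. Your claim that $\tau$ depends also on $\|h_0\|_{H^\alpha_x}$ and $\|h_0\|_{H^1_v}$ is unnecessary; you compensate for it by invoking the a priori bound to rule out blow-up, which does work but is a slightly roundabout route to the same end. Your Gr\"onwall uniqueness argument (Lemma \ref{lemma:Gron} with $a=0$) is a valid alternative to the paper's contraction-based uniqueness; just note that the lemma is stated for $a>0$, so one should either observe that its proof works verbatim for $a=0$ or pass to the limit. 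Finally, your remark that the statement tacitly requires $\int_{\mathbb{R}^6}h_0 f_\infty\,dxdv=0$ (so that the exponentially decaying estimates of Lemma \ref{lem: e^tK} apply to $e^{-tK}h_0$ and a solution in $X\cap Y$ can exist at all) is a correct observation about a hypothesis that is implicit in the lemma as written but made explicit in Theorem \ref{th:VPFP} where it is invoked.
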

\begin{proof}
Let $\tau>0.$  We define a mapping  $$G: C\left([0, \tau]; H^{1}_v(\mathbb{R}^{6}, f_{\infty}) \right)\to  C\left([0, \tau]; H^{1}_v(\mathbb{R}^{6}, f_{\infty}) \right)$$ by
$$G[w]= e^{-t K } h_0+\int_0^t e^{-(t-s) K } \left(\nabla_x \psi_h\cdot \nabla_v w-\nu/\sigma v\cdot\nabla_x \psi_h w\right)  ds, \, \, \, \, t\in [0,\tau].$$
$w$ is a solution of \eqref{u}  if and only if it is a fixed point of $G.$ We first show that, if  $\tau>0$ is small enough, then  there is a unique fixed point of $G$  in $C\left([0, \tau]; H^{1}_v(\mathbb{R}^{6}, f_{\infty}) \right).$
 We use the norm  $||h||_{\tau,2}\colonequals \sup_{t\in [0,\tau]}\left\{||h(t)||_{ H^{1}_v(\mathbb{R}^{6}, f_{\infty})}\right\}$ in $C\left([0, \tau]; H^{1}_v(\mathbb{R}^{6}, f_{\infty}) \right).$
 As we assume $h \in X,$  \eqref{gr.psi in L^infty} shows that $\nabla_x \psi_h(t)$ is bounded for all $t\geq 0.$  Therefore, $$\nabla_x \psi_h(t)\cdot \nabla_v w(t)-\nu/\sigma v\cdot\nabla_x \psi_h(t) w(t)\in L^2(\mathbb{R}^6,f_{\infty})$$ for all $w \in C\left([0, \tau]; H^{1}_v(\mathbb{R}^{6}, f_{\infty}) \right)$ and $t\geq 0.$  
 Using the H\"older inequality, \eqref{gr.psi in L^infty} and \eqref{Vil.s lemma2}
 \begin{multline}\label{FR}
|| \nabla_x \psi_h(t)\cdot \nabla_v w(t)-\nu/\sigma v\cdot\nabla_x \psi_h(t) w(t)||_{L^2(\mathbb{R}^{6}, f_{\infty})}
\\
 \leq \sqrt{2||\nabla_x \psi_h(t)||^2_{L^{\infty}}\int_{\mathbb{R}^{6}}\left(  |\nabla_v w(t)|^2+\nu^2/\sigma^2|v|^2| w^2(t)\right)f_{\infty} dxdv}
 \\\leq C_R ||h(t)||_{H_x^{\alpha}(\mathbb{R}^6,f_{\infty})}|| w(t)||_{H^1_v(\mathbb{R}^6,f_{\infty})}
 \end{multline}
 for some constant $C_R>0.$ Also, we can check by integration by parts that
 \begin{equation}\label{incH}\nabla_x \psi_h(t)\cdot \nabla_v w(t)-\nu/\sigma v\cdot\nabla_x \psi_h(t) w(t)\in \mathcal{H}
 \end{equation} for all $t\geq 0.$

   We estimate $G[w]$ in $C\left([0,\tau]; H^{1}_v(\mathbb{R}^{6}, f_{\infty})\right):$
 \begin{multline}\label{FF1}
||G[w(t)]||_{H^{1}_v(\mathbb{R}^{6}, f_{\infty})}\leq ||e^{-t K } h_0||_{H^{1}_v(\mathbb{R}^{6}, f_{\infty})} \\+\int_{0}^t ||e^{-(t-s) K } \left(\nabla_x \psi_h\cdot \nabla_v w-v\cdot\nabla_x \psi_h w\right) ||_{H^{1}_v(\mathbb{R}^{6}, f_{\infty})} ds.
\end{multline}
 \eqref{ee:H_v^1}, \eqref{incH} and \eqref{FR} let us estimate the second term on the right side of \eqref{FF1}
$$\int_{0}^t ||e^{-(t-s) K } \left(\nabla_x \psi_h(s)\cdot \nabla_v w(s)-\nu/\sigma v\cdot\nabla_x \psi_h(s) w(s)\right) ||_{H^{1}_v(\mathbb{R}^{6}, f_{\infty})} ds$$
$$\leq \mathcal{C}_5\int_{0}^t (1+(t-s)^{-\frac{1}{2}})e^{-\lambda_1 (t-s)}|| \nabla_x \psi_h(s)\cdot \nabla_v w(s)-\nu/\sigma v\cdot\nabla_x \psi_h(s) w(s)||_{L^2(\mathbb{R}^{6}, f_{\infty})} ds $$
\begin{equation}\label{FF3}\leq \mathcal{C}_5C_R\int_{0}^t  (1+(t-s)^{-\frac{1}{2}})e^{-\lambda_1 (t-s)}||h(s)||_{H_x^{\alpha}(\mathbb{R}^6,f_{\infty})}||w(s)||_{H_v^{1}(\mathbb{R}^6,f_{\infty})}ds
\end{equation}
$$\leq \mathcal{C}_5C_Re^{-\lambda_1 t}||h||_{X}||w||_{\tau,2}\int_{0}^t  (1+(t-s)^{-\frac{1}{2}})ds=\mathcal{C}_5C_Re^{-\lambda_1 t}||h||_{X}||w||_{\tau,2}(t+2\sqrt{t}),$$
where we used
 $$ \int_{0}^t  (1+(t-s)^{-\frac{1}{2}})ds= t+2\sqrt{t}.$$
The estimates above show  $$ ||G[w(t)]||_{H^{1}_v(\mathbb{R}^{6}, f_{\infty})}\leq   ||e^{-t K } h_0||_{H^{1}_v(\mathbb{R}^{6}, f_{\infty})} +\mathcal{C}_5C_R e^{-\lambda_1 t}||h||_{X}||w||_{\tau,2}(t+2\sqrt{t}), $$
and so \begin{equation}\label{F_Y}
||G[w]||_{\tau,2}
\leq  ||e^{-t K } h_0||_{\tau,2}+\mathcal{C}_5C_R||h||_{X}||w||_{\tau,2}(\tau+2\sqrt{\tau}),
\end{equation}
 Let $r\colonequals  2||e^{-t K } h_0||_{\tau,2}.$ If $\tau$ is small enough so that
\begin{equation}\label{tau}\mathcal{C}_5C_R||h||_{X}(\tau+2\sqrt{\tau})\leq \frac{1}{2} ,
\end{equation}  then \eqref{F_Y} shows that
\begin{equation}\label{F<r}||G[w]||_{\tau,2}\leq r \, \, \, \, \, \, \, \text{ for any  } \, \, \, \, ||w||_{\tau, 2}\leq r.
\end{equation}\\
Similar computations  show
\begin{equation*}\label{F_Y-F_Y}
||G[w]-G[u]||_{\tau,2}
  \leq \mathcal{C}_5C_R||h||_X ||w-u||_{\tau,2}( \tau+2\sqrt{\tau})
\end{equation*}
 for all $w, u\in C\left([0, \tau]; H^{1}_v(\mathbb{R}^{6}, f_{\infty}) \right).$
If $\tau$ satisfies \eqref{tau}, then  \begin{equation}\label{F<1}
||G[w]-G[u]||_{\tau,2}
  \leq  \frac{1}{2} ||w-u||_{\tau,2}.
\end{equation}
We fix $\tau$ so that \eqref{tau} holds. Then, \eqref{F<r} and \eqref{F<1} shows that $F$ has a fixed point $w$ in $C\left([0, \tau]; H^{1}_v(\mathbb{R}^{6}, f_{\infty}) \right).$
 We note that $\tau$ only depends on the product $\mathcal{C}_4C_R|| h||_{X}<\infty.$
From what we have just proved it follows that if $w$ is a solution of \eqref{u} on the interval $[0,\tau],$ it can be extended to the interval $[0,2\tau]$ by defining on $[\tau, 2\tau],$ $w(t)=w_1(t)$ where $w_1$ is the solution of the integral equation
\begin{equation*} w_1(t)= e^{-(t-\tau) K } w(\tau)+\int_{\tau}^t e^{-(t-s) K } \left(\nabla_x \psi_h\cdot \nabla_v w_1-\nu/\sigma v\cdot\nabla_x \psi_h w_1\right)  ds, \, \, \, \, t\in [\tau, 2\tau].
\end{equation*}
After that, we extend this solution to the interval $[0,3 \tau]$ and so on. Thus, we can prove that there is  a unique global in time solution.

Next, we show that the solution $w$ is in $Y:$
\begin{multline*}||w(t)||_{H^{1}_v(\mathbb{R}^{6}, f_{\infty})}\leq ||e^{-t K } h_0||_{H^{1}_v(\mathbb{R}^{6}, f_{\infty})} \\+\int_{0}^t ||e^{-(t-s) K } \left(\nabla_x \psi_h\cdot \nabla_v w-\nu/\sigma v\cdot\nabla_x \psi_h w\right) ||_{H^{1}_v(\mathbb{R}^{6}, f_{\infty})} ds.
\end{multline*}
 Then, \eqref{e:H^1_v} and similar estimate as we did in  \eqref{FF3} show that
$$||w(t)||_{H^{1}_v(\mathbb{R}^{6}, f_{\infty})}\leq \mathcal{C}_4e^{-\lambda_1 t}||h_0||_{H^{1}_v (\mathbb{R}^{6}, f_{\infty})}$$
$$ +\mathcal{C}_5C_R\int_{0}^t  (1+(t-s)^{-\frac{1}{2}})e^{-\lambda_1 (t-s)}||h(s)||_{H_x^{\alpha}(\mathbb{R}^6,f_{\infty})}||w(s)||_{H_v^{1}(\mathbb{R}^6,f_{\infty})}ds$$
$$\leq  \mathcal{C}_4 e^{-\lambda_1 t}||h_0||_{H^{1}_v(\mathbb{R}^{6}, f_{\infty})} +\mathcal{C}_5 C_R ||h||_{X} e^{-\lambda_1 t}\int_{0}^t  (1+(t-s)^{-\frac{1}{2}})||w(s)||_{H_v^{1}(\mathbb{R}^6,f_{\infty})}ds.$$
It yields
\begin{multline*}e^{\lambda_1 t}||w(t)||_{H^{1}_v(\mathbb{R}^{6}, f_{\infty})}\leq  \mathcal{C}_4||h_0||_{H^{1}_v(\mathbb{R}^{6}, f_{\infty})}\\ +\mathcal{C}_5C_R ||h||_{X} \int_{0}^t  (1+(t-s)^{-\frac{1}{2}})e^{-\lambda_1 s}e^{\lambda_1 s}||w(s)||_{H_v^{1}(\mathbb{R}^6,f_{\infty})}ds.
\end{multline*}
{Using Lemma \ref{lemma:Gron} we obtain
$$e^{\lambda_1 t}||w(t)||_{H^{1}_v(\mathbb{R}^{6}, f_{\infty})}\leq  \mathcal{C}_4(1+\mathcal{C}_5C_RI_2 ||h||_{X} )e^{\mathcal{C}_5^2C_R^2\Lambda ||h||^2_{X} } ||h_0||_{H^{1}_v(\mathbb{R}^{6}, f_{\infty})}
$$
After taking the supremum in time
 \begin{equation}\label{w_Y}
||w||_{Y}\leq   \mathcal{C}_4(1+\mathcal{C}_5C_RI_2 ||h||_{X} )e^{\mathcal{C}_5^2C_R^2\Lambda ||h||^2_{X} } ||h_0||_{H^{1}_v(\mathbb{R}^{6}, f_{\infty})}<\infty.
\end{equation}
}

We show that the solution $w$ is also in $X:$
 \begin{multline}\label{F1}
||w(t)||_{H^{\alpha}_x(\mathbb{R}^{6}, f_{\infty})}\leq ||e^{-t K } h_0||_{H^{\alpha}_x(\mathbb{R}^{6}, f_{\infty})}\\ +\int_{0}^t ||e^{-(t-s) K } \left(\nabla_x \psi_h\cdot \nabla_v w-\nu/\sigma v\cdot\nabla_x \psi_h w\right) ||_{H^{\alpha}_x(\mathbb{R}^{6}, f_{\infty})} ds.
\end{multline}
\eqref{ee:H_x^alpha} and \eqref{FR}  let us estimate
 $$
 \int_{0}^t ||e^{-(t-s) K } \left(\nabla_x \psi_h\cdot \nabla_v w-\nu/\sigma v\cdot\nabla_x \psi_h w\right)||_{H^{\alpha}_x(\mathbb{R}^{6}, f_{\infty})} ds$$
 $$\leq \mathcal{C}_3\int_{0}^t (1+(t-s)^{-\frac{3\alpha}{2}})e^{-\lambda_1 (t-s)}|| \nabla_x \psi_h\cdot \nabla_v w-\nu/\sigma v\cdot\nabla_x \psi_h w||_{L^2(\mathbb{R}^{6}, f_{\infty})} ds $$
\begin{equation}\label{F3}\leq \mathcal{C}_3C_R\int_{0}^t  (1+(t-s)^{-\frac{3\alpha}{2}})e^{-\lambda_1 (t-s)}||h(s)||_{H_x^{\alpha}(\mathbb{R}^6,f_{\infty})}||w(s)||_{H_v^{1}(\mathbb{R}^6,f_{\infty})}ds
 \end{equation}
$$\leq \mathcal{C}_3C_Re^{-\lambda_1 t}||h||_{X}||w||_{Y}\int_{0}^t  (1+(t-s)^{-\frac{3\alpha}{2}})e^{-\lambda_1 s} ds.$$
Then, \eqref{F1}, \eqref{e:H^alpha} and \eqref{F3} provide
$$ e^{\lambda_1 t}||w(t)||_{H^{\alpha}_x(\mathbb{R}^{6}, f_{\infty})}\leq   \mathcal{C}_2|| h_0||_{H^{\alpha}_x(\mathbb{R}^{6}, f_{\infty})}+\mathcal{C}_3C_R||h||_{X}||w||_Y\int_{0}^t  (1+(t-s)^{-\frac{3\alpha}{2}})e^{-\lambda_1 s}  ds. $$
We take the supremum in time and use
\eqref{w_Y}
 \begin{multline}\label{w_X}
||w||_{X}\leq   \mathcal{C}_2|| h_0||_{H^{\alpha}_x(\mathbb{R}^{6}, f_{\infty})}\\+ \mathcal{C}_3\mathcal{C}_4 C_RI_1  (||h||_{X}+\mathcal{C}_5C_RI_2 ||h||^2_{X} )e^{\mathcal{C}_5^2C_R^2\Lambda ||h||^2_{X} } ||h_0||_{H^{1}_v(\mathbb{R}^{6}, f_{\infty})}<\infty.
\end{multline}
Finally, \eqref{w_Y} and \eqref{w_X} show that $w\in X\cap Y.$
\end{proof}


\begin{proof}[\textbf{Proof of Theorem \ref{th:VPFP}} ]

We construct a solution to \eqref{INTe} with a fixed point argument, and therefore we define the mapping $U:X\to X$ such that $U[h]$ (which is the value of $U$ at $h$) is the solution of
\begin{equation}\label{Th} U[h(t)]= e^{-t K } h_0+\int_0^t e^{-(t-s) K } \left(\nabla_x \psi_h(s)\cdot \nabla_v U[h(s)]-\nu/\sigma v\cdot\nabla_x \psi_h(s) U[h(s)]\right)  ds.
\end{equation}
Lemma \ref{Lem: Fix} provides that, for any $h\in X,$ there is a unique  $U[h]\in  X\cap Y$ which satisfies \eqref{Th}. Therefore,  this mapping  is well-defined.  $h\in X$ solves \eqref{INTe} if and only if $h=U[h].$ We will show that  $U$ has a unique fixed point in $X.$

 The estimates \eqref{w_Y} and \eqref{w_X} provide
{
\begin{equation}\label{W_Y}
||U[h]||_{Y}\leq   \mathcal{C}_4(1+\mathcal{C}_5C_RI_2 ||h||_{X} )e^{\mathcal{C}_5^2C_R^2\Lambda ||h||^2_{X} } ||h_0||_{H^{1}_v(\mathbb{R}^{6}, f_{\infty})},
\end{equation}
 \begin{multline}\label{W_X}
||U[h]||_{X}\leq   \mathcal{C}_2|| h_0||_{H^{\alpha}_x(\mathbb{R}^{6}, f_{\infty})}\\+ \mathcal{C}_3\mathcal{C}_4 C_RI_1 (||h||_{X}+\mathcal{C}_5C_RI_2 ||h||^2_{X} )e^{\mathcal{C}_5^2C_R^2\Lambda ||h||^2_{X} } ||h_0||_{H^{1}_v(\mathbb{R}^{6}, f_{\infty})}.
\end{multline}
}
For $h, g \in X,$ we have
$$U[h]-U[g]= \int_0^t e^{-(t-s) K } \left(\nabla_x \psi_h\cdot \nabla_v (U[h]-U[g])-\nu/\sigma v\cdot\nabla_x \psi_h (U[h]-U[g])\right)  ds$$
$$+\int_0^t e^{-(t-s) K } \left((\nabla_x \psi_h-\nabla_x \psi_g)\cdot \nabla_v U[g]-\nu/\sigma v\cdot (\nabla_x \psi_h-\nabla_x \psi_g) U[g]\right)ds. $$
As we did in \eqref{FR}, we can show  by using the H\"older inequality, \eqref{gr.psi in L^infty} and \eqref{Vil.s lemma2} that
\begin{multline}\label{wh-wg1}
||\nabla_x \psi_h(t)\cdot \nabla_v (U[h(t)]-U[g(t)])-\nu/\sigma v\cdot\nabla_x \psi_h (t)(U[h(t)]-U[g(t)]) ||_{L^2(\mathbb{R}^6,f_{\infty})}\\ \leq C_R ||h(t)||_{H_x^{\alpha}(\mathbb{R}^6,f_{\infty})}||U[h(t)]-U[g(t)]||_{H_v^{1}(\mathbb{R}^6,f_{\infty})}
\end{multline}
and \begin{multline}\label{wh-wg2}
||(\nabla_x \psi_h(t)-\nabla_x \psi_g(t))\cdot \nabla_v U[g(t)]-\nu/\sigma v\cdot (\nabla_x \psi_h(t)-\nabla_x \psi_g(t)) U[g(t)]||_{L^2(\mathbb{R}^6,f_{\infty})}\\ \leq C_R ||h(t)-g(t)||_{H_x^{\alpha}(\mathbb{R}^6,f_{\infty})}||U[g(t)]||_{H_v^{1}(\mathbb{R}^6,f_{\infty})}
\end{multline}
hold for all $t\geq 0.$ Integrating by parts we can check \begin{equation}\label{incHWh}\nabla_x \psi_h(t)\cdot \nabla_v (U[h(t)]-U[g(t)])-\nu/\sigma v\cdot\nabla_x \psi_h(t) (U[h(t)]-U[g(t)]) \in \mathcal{H}\end{equation}
and
\begin{equation}\label{incHWg}(\nabla_x \psi_h(t)-\nabla_x \psi_g(t))\cdot \nabla_v U[g(t)]-\nu/\sigma v\cdot (\nabla_x \psi_h(t)-\nabla_x \psi_g(t)) U[g(t)] \in \mathcal{H}\end{equation} for all $t\geq 0.$
Using  \eqref{ee:H_v^1}, \eqref{incHWh}, \eqref{incHWg}, \eqref{wh-wg1} and  \eqref{wh-wg2} we estimate $U[h]-U[g]$ in $Y:$ \begin{small}
\begin{multline*}
||U[h(t)]-U[g(t)]||_{H_v^{1}(\mathbb{R}^6,f_{\infty})}\\  \leq \int_0^t ||e^{-(t-s) K } \left(\nabla_x \psi_h\cdot \nabla_v (U[h]-U[g])-\nu/\sigma v\cdot\nabla_x \psi_h (U[h]-U[g])\right) ||_{H_v^{1}(\mathbb{R}^6,f_{\infty})} ds\\+\int_0^t ||e^{-(t-s) K } \left((\nabla_x \psi_h-\nabla_x \psi_g)\cdot \nabla_v U[g]-\nu/\sigma v\cdot (\nabla_x \psi_h-\nabla_x \psi_g) U[g]\right)||_{H_v^{1}(\mathbb{R}^6,f_{\infty})}ds \\
\leq \mathcal{C}_5C_R\int_{0}^t  (1+(t-s)^{-\frac{1}{2}})e^{-\lambda_1 (t-s)}||h(s)||_{H_x^{\alpha}(\mathbb{R}^6,f_{\infty})}||U[h(s)]-U[g(s)]||_{H_v^{1}(\mathbb{R}^6,f_{\infty})}ds\\
+ \mathcal{C}_5C_R\int_{0}^t  (1+(t-s)^{-\frac{1}{2}})e^{-\lambda_1 (t-s)}||h(s)-g(s)||_{H_x^{\alpha}(\mathbb{R}^6,f_{\infty})}||U[g(s)]||_{H_v^{1}(\mathbb{R}^6,f_{\infty})}ds.
\end{multline*}
\end{small}
This shows that
\begin{multline*}
e^{\lambda_1 t}||U[h(t)]-U[g(t)]||_{H_v^{1}(\mathbb{R}^6,f_{\infty})}\\
\leq \mathcal{C}_5C_R||h||_X\int_{0}^t  (1+(t-s)^{-\frac{1}{2}})e^{-\lambda_1 s} e^{\lambda_1 s}||U[h(s)]-U[g(s)]||_{H_v^{1}(\mathbb{R}^6,f_{\infty})}ds\\
+ \mathcal{C}_5C_R ||h-g||_{X} ||U[g]||_{Y}\int_{0}^t  (1+(t-s)^{-\frac{1}{2}})e^{-\lambda_1 s}ds\\
\leq \mathcal{C}_5C_R||h||_X\int_{0}^t  (1+(t-s)^{-\frac{1}{2}})e^{-\lambda_1 s} e^{\lambda_1 s}||U[h(s)]-U[g(s)]||_{H_v^{1}(\mathbb{R}^6,f_{\infty})}ds\\
+ \mathcal{C}_5C_R I_2 ||h-g||_{X} ||U[g]||_{Y}.
\end{multline*}
We apply Lemma \ref{lemma:Gron} to this inequality
\begin{multline*}
e^{\lambda_1 t}||U[h(t)]-U[g(t)]||_{H_v^{1}(\mathbb{R}^6,f_{\infty})}\\
\leq
\mathcal{C}_5C_R I_2 (1+ \mathcal{C}_5C_RI_2||h||_X)e^{ \mathcal{C}^2_5C^2_R\Lambda||h||^2_X} ||h-g||_{X}  ||U[g]||_{Y}.
\end{multline*}
We take  the supremum in time to get
\begin{equation*}
||U[h]-U[g]||_{Y}
\leq \mathcal{C}_5C_R I_2(1+ \mathcal{C}_5C_RI_2||h||_X)e^{ \mathcal{C}^2_5C^2_R\Lambda||h||^2_X}  ||h-g||_{X} ||U[g]||_{Y} .
\end{equation*}
The estimate on $||U[g]||_Y$  in \eqref{W_Y} shows
\begin{small}\begin{multline}\label{Wg-Wg_Y}
||U[h]-U[g]||_{Y}
\leq \\  \mathcal{C}_4 \mathcal{C}_5C_R I_2 (1+ \mathcal{C}_5C_RI_2||h||_X) (1+\mathcal{C}_5C_RI_2 ||g||_{X} )e^{ \mathcal{C}^2_5C^2_R\Lambda(||h||^2_X+||g||^2_{X})} ||h_0||_{H^{1}_v(\mathbb{R}^{6}, f_{\infty})}||h-g||_{X}
\end{multline}
\end{small}

Using  \eqref{ee:H_x^alpha}, \eqref{wh-wg1} and  \eqref{wh-wg2} we estimate $U[h]-U[g]$ in $X:$
\begin{small}
\begin{multline*}
||U[h(t)]-U[g(t)]||_{H_x^{\alpha}(\mathbb{R}^6,f_{\infty})}\\  \leq \int_0^t ||e^{-(t-s) K } \left(\nabla_x \psi_h\cdot \nabla_v (U[h]-U[g])-\nu/\sigma v\cdot\nabla_x \psi_h (U[h]-U[g])\right) ||_{H_x^{\alpha}(\mathbb{R}^6,f_{\infty})} ds\\+\int_0^t ||e^{-(t-s) K } \left((\nabla_x \psi_h-\nabla_x \psi_g)\cdot \nabla_v U[g]-\nu/\sigma v\cdot (\nabla_x \psi_h-\nabla_x \psi_g) U[g]\right)||_{H_x^{\alpha}(\mathbb{R}^6,f_{\infty})}ds \\
\leq \mathcal{C}_3C_R\int_{0}^t  (1+(t-s)^{-\frac{3\alpha}{2}})e^{-\lambda_1 (t-s)}||h(s)||_{H_x^{\alpha}(\mathbb{R}^6,f_{\infty})}||U[h(s)]-U[g(s)]||_{H_v^{1}(\mathbb{R}^6,f_{\infty})}ds\\
+ \mathcal{C}_3C_R\int_{0}^t  (1+(t-s)^{-\frac{3\alpha}{2}})e^{-\lambda_1 (t-s)}||h(s)-g(s)||_{H_x^{\alpha}(\mathbb{R}^6,f_{\infty})}||U[g(s)]||_{H_v^{1}(\mathbb{R}^6,f_{\infty})}ds.
\end{multline*}
\end{small}
This shows that
\begin{multline*}
e^{\lambda_1 t}||U[h(t)]-U[g(t)]||_{H_x^{\alpha}(\mathbb{R}^6,f_{\infty})}
\\
\leq \mathcal{C}_3C_R||h||_X ||U[h]-U[g]||_{Y}\int_{0}^t  (1+(t-s)^{-\frac{3\alpha}{2}})e^{-\lambda_1 s} ds\\
+ \mathcal{C}_3C_R ||h-g||_{X} ||U[g]||_{Y}\int_{0}^t  (1+(t-s)^{-\frac{3\alpha}{2}})e^{-\lambda_1 s}ds.
\end{multline*}
We take  the supremum in time and  obtain
\begin{equation*}
||U[h]-U[g]||_{X}\\
\leq \mathcal{C}_3C_R I_1 ||h||_X ||U[h]-U[g]||_{Y}\\
+ \mathcal{C}_3C_RI_1 ||h-g||_{X} ||U[g]||_{Y}.
\end{equation*}
The estimates \eqref{W_Y} and \eqref{Wg-Wg_Y} provide
{
\begin{multline}\label{W_h-W_g<h_0}
||U[h]-U[g]||_{X}\\
\leq \left[\mathcal{C}_3
\mathcal{C}_4 \mathcal{C}_5C_R^2 I_1 I_2 (||h||_X+ \mathcal{C}_5C_RI_2||h||_X^2) (1+\mathcal{C}_5C_RI_2 ||g||_{X} )e^{ \mathcal{C}^2_5C^2_R\Lambda(||h||^2_X+||g||^2_{X})} \right.
\\
\left.+ \mathcal{C}_3  \mathcal{C}_4 C_RI_1(1+\mathcal{C}_5C_RI_2 ||g||_{X} )e^{\mathcal{C}_5^2C_R^2\Lambda ||g||^2_{X} } \right]||h_0||_{H^{1}_v(\mathbb{R}^{6}, f_{\infty})}||h-g||_{X}
\end{multline}
}

 Let $r>0,$ then there exist $\delta_1=\delta_1(r)>0$ and $\delta_2=\delta_2(r)>0$ such that
 \begin{small}
  \begin{equation}\label{int2}
  \begin{cases}
 \mathcal{C}_2\delta_1+ \mathcal{C}_3\mathcal{C}_4 C_RI_1 (r+\mathcal{C}_5C_RI_2 r^2 )e^{\mathcal{C}_5^2C_R^2\Lambda r^2 } \delta_2\leq r\\
 \left[\mathcal{C}_3
\mathcal{C}_4 \mathcal{C}_5C_R^2 I_1 I_2 r(1+\mathcal{C}_5C_RI_2 r )^2e^{2 \mathcal{C}^2_5C^2_R\Lambda r^2}+ \mathcal{C}_3  \mathcal{C}_4 C_RI_1(1+\mathcal{C}_5C_RI_2 r )e^{\mathcal{C}_5^2C_R^2\Lambda r^2 }
\right]\delta_2 <1.
\end{cases}
\end{equation}
\end{small}
If $h_0$ satisfies \begin{equation}\label{intcond}
|| h_0||_{H^{\alpha}_x(\mathbb{R}^{6}, f_{\infty})}\leq \delta_1\, \, \, \, \text{and} \, \, \, \, \,  ||h_0||_{H^{1}_v(\mathbb{R}^{6}, f_{\infty})}\leq \delta_2,
\end{equation}  
 then \eqref{W_X},  \eqref{W_h-W_g<h_0} and \eqref{int2} show
\begin{equation*}\label{W[h]<1}
 ||U[h]||_{X}\leq r \, \, \, \, \text{ for all } \, \, ||h||_{X}\leq r
\end{equation*}
and
\begin{equation*}\label{Wh-Wg<h-g}
||U[h]-U[g]||_{X}
\leq \epsilon||h-g||_{X}, \, \, \, \, \, \, \,  \text{ for all } \, \, ||h||_{X}\leq r,\, \, ||g||_{X}\leq r,
\end{equation*}
where \begin{small}$$\epsilon\colonequals \left[\mathcal{C}_3
\mathcal{C}_4 \mathcal{C}_5C_R^2 I_1 I_2 r(1+\mathcal{C}_5C_RI_2 r )^2e^{2 \mathcal{C}^2_5C^2_R\Lambda r^2}+ \mathcal{C}_3  \mathcal{C}_4 C_RI_1(1+\mathcal{C}_5C_RI_2 r )e^{\mathcal{C}_5^2C_R^2\Lambda r^2 }
\right]\delta_2$$\end{small}is a constant in $ (0,1). $
Then, the contraction principle yields that $U$ has a unique fixed point $h \in  X$ such that $||h||_X\leq r.$ Moreover,  \eqref{W_Y} shows  the fixed point $h$ is also in $Y$ and
$$||h||_{Y}\leq   \mathcal{C}_4(1+\mathcal{C}_5C_RI_2 ||h||_{X} )e^{\mathcal{C}_5^2C_R^2\Lambda ||h||^2_{X} } ||h_0||_{H^{1}_v(\mathbb{R}^{6}, f_{\infty})}\leq   \mathcal{C}_4(1+\mathcal{C}_5C_RI_2 r )e^{\mathcal{C}_5^2C_R^2\Lambda r^2 }  \delta_2.$$
Therefore, Theorem \ref{th:VPFP} holds with $C_5\colonequals r,$  $C_6\colonequals \mathcal{C}_4(1+\mathcal{C}_5C_RI_2 r )e^{\mathcal{C}_5^2C_R^2\Lambda r^2 }  \delta_2$ and $C_7\colonequals \theta_2 r$ by \eqref{gr.psi in Lalpha}.

Here, $r$ can be any positive number and  there always exist $\delta_1=\delta_1(r)>0$ and $\delta_2=\delta_2(r)>0$ such that \eqref{int2} holds. To have the condition \eqref{intcond} with larger $\delta_1$ and $\delta_2,$ we fix $r>0$ so that   $\delta_1=\delta_1(r)>0$ and $\delta_2=\delta_2(r)>0$ are as large as possible.
\end{proof}






\section*{Acknowledgments}
The author thanks his supervisor  Anton Arnold for stimulating discussions. The author is supported by the Austrian Science Fund (FWF) project \href{https://doi.org/10.55776/F65}{10.55776/F65}.









\medskip

\end{document}